\newtheorem{theorem}{Theorem}[section]
\newtheorem{lemma}{Lemma}[section]
\newtheorem{remark}{Remark}[section]
\newtheorem{corollary}{Corollary}[section]
\numberwithin{equation}{section}
\newcommand{\al}{\alpha}
\renewcommand{\d}{{\rm d}}
\def\Dal{{\partial_t^\al}}
\def\Om{\Omega}
\def\II{(\Om)}
\def\bPtau{\bar\partial_\tau}
\begin{document}

\title{Incomplete Iterative Solution of the Subdiffusion Problem}

\author{Bangti Jin\thanks{Department of Computer Science, University College London, Gower Street, London, WC1E 2BT, UK.
(\texttt{b.jin@ucl.ac.uk, bangti.jin@gmail.com})}\and Zhi Zhou\thanks{Department of Applied Mathematics,
The Hong Kong Polytechnic University, Kowloon, Hong Kong (\texttt{zhizhou@polyu.edu.hk})}
}
%\subjclass[2010]{Primary 49M25, 49K20, 34A08; Secondary 65N30, 65N15.}

\date{}

\maketitle

\begin{abstract}
In this work, we develop an efficient incomplete iterative scheme for the numerical solution of the subdiffusion
model involving a Caputo  derivative of order $\alpha\in(0,1)$ in time. It is based on
piecewise linear Galerkin finite element method in space and backward Euler convolution quadrature in time and solves one
linear algebraic system inexactly by an iterative algorithm at each time step. We present theoretical results for
both smooth and nonsmooth solutions, using novel weighted estimates of the time-stepping scheme. The analysis
indicates that with the number of iterations at each time level chosen properly, the error estimates are nearly
identical with that for the exact linear solver, and the theoretical findings provide guidelines
on the choice. Illustrative numerical results are presented to complement the theoretical analysis.

\textbf{Keywords}: subdiffusion, finite element method, backward Euler scheme, nonsmooth data, convergence
analysis, incomplete iterative scheme
\end{abstract}

\section{Introduction}\label{sec:intro}
This work is concerned with efficient iterative solvers for the subdiffusion model. Let $\Omega\subset\mathbb{R}^d$
($d=1,2,3$) be a convex polyhedral domain with a boundary $\partial\Omega$. The subdiffusion model for the
function $u(t)$ reads:
\begin{equation}\label{eqn:fde}
   \left\{\begin{aligned}
     \Dal u(t) +A u(t) & = f(t),\quad \forall\, 0<t\leq T,\\
     u(0) & =v,\quad\text{in}\,\,\,\Omega,
   \end{aligned}\right.
\end{equation}
where $T>0$ is fixed, $f:(0,T)\rightarrow L^2(\Omega)$ and $v\in L^2(\Omega)$ are given functions, and $A=-\Delta:
D(A)\equiv H^1_0(\Omega)\cap H^2(\Omega)\rightarrow L^2(\Omega)$ denotes the negative Laplacian (with a zero
Dirichlet boundary condition). The notation $\Dal u$, $0<\al<1$, denotes the Caputo derivative of order $\al$ in
$t$, defined by \cite[p.\,91]{KilbasSrivastavaTrujillo:2006}
\begin{equation}\label{McT}
   \Dal u(t):= \frac{1}{\Gamma(1-\al)} \int_0^t(t-s)^{-\al}u'(s)\, {\rm d}s,
\end{equation}
where the Gamma function $\Gamma(\cdot)$ is defined by
$\Gamma(z):=\int_0^\infty s^{z-1}e^{-s}{\rm d}s$, $\Re z>0$.

The model \eqref{eqn:fde} describes so-called subdiffusion process, in which the mean squared displacement
of the particle grows only sublinearly with the time $t$, in contrast to the linear growth of Brownian
motion for normal diffusion. The sublinear growth captures important memory and hereditary effects of the
underlying physical process. Many experimental studies show that it can offer a superior
fit to experimental data than normal diffusion. The long list of applications includes thermal diffusion
in fractal domains, heat conduction with memory effect, and protein transport in cell membrane etc. We
refer interested readers to the reviews \cite{MetzlerKlafter:2000,MetzlerJeon:2014}
for physical background, mathematical modeling and long lists of applications.

Over the last two decades, a number of numerical methods have been developed for the model \eqref{eqn:fde},
e.g., finite element method, finite difference method and spectral method in space, and convolution
quadrature (CQ) and L1 type time-stepping schemes; See \cite{LinXu:2007,CuestaLubichPalencia:2006,SunWu:2006,McLeanMustapha:2009,JinLazarovZhou:SISC2016,Alikhanov:2015,MustaphaAbdallahFurati:2014,Stynes:2017,JinLiZhou:2019mc}
for a rather incomplete list, and \cite{JinLazarovZhou:2018review} for an overview on nonsmooth data
analysis, including optimal convergence rates.  The error analysis in all existing works requires the exact
resolution of resulting linear systems at each time step, which can be expensive. This motivates the use
of an iterative solver to approximately solve the resulting linear systems in order to enhance the
computational efficiency. There are many possible choices of iterative solvers, e.g., Krylov subspace methods,
multigrid methods, and domain decomposition methods.

In this work, we develop an efficient incomplete iterative scheme (IIS) for \eqref{eqn:fde}, based on
the Galerkin finite element method (FEM) in space, backward Euler CQ in time, and an iterative solver
for resulting linear  systems. We prove nearly optimal error estimates for both smooth and
nonsmooth solutions, under a contraction property of the iterative solver, cf. \eqref{eqn:Bhtau}, which
holds for many iterative methods. The IIS can maintain the overall accuracy if the number of iterations
at each time level is chosen suitably. Specifically, let $U_h^{n,M_n}$ be the solution by the IIS at $t_n$
obtained with $M_n$ iterations of the iterative solver, and $u$ the exact solution of \eqref{eqn:fde}.
Then for smooth solutions, e.g., $u\in C([0,T];D(A)) \cap C^2([0,T];H_0^1(\Omega))$, there exists a $\delta>0$ such that
\begin{equation*}
 \| U_h^{n,m}- u(t_n) \|_{L^2(\Omega)} \le c(u) (h^2 + \tau),\quad \text{for}~~ c_0 \kappa^m \le \delta,
\end{equation*}
where $c_0>0$ and $\kappa\in(0,1)$ are convergence parameters of the iterative method in a weighted energy norm;
see Theorem \ref{thm:iter-smooth}. That is, the number of iterations at each time level can be chosen uniformly
in time provided that it is large enough. In the absence of sufficient solution smoothness,
a uniform iteration number fails to ensure an optimal error estimate. The number of iterations at initial times
should be larger in order to compensate the singular behavior. For example,
for $v\in D(A)$ and $f\equiv0$,  there exists a $\delta>0$ such that
\begin{equation*}
 \| U_h^{n,M_n}- u(t_n) \|_{L^2(\Omega)} \le c \left(h^2+ \tau t_n^{\alpha-1}\ell_n\right)
 \| Av\|_{L^2(\Omega)}, %\text{if}~c_0\kappa^{M_n} \le \delta \ell_n^{-1}\min(t_n^\frac{\alpha}{2},1),
 \end{equation*}
provided that $c_0\kappa^{M_n} \le \delta \ell_n^{-1}\min(t_n^\frac{\alpha}{2},1)$, with
$\ell_n=\ln(1+t_n/\tau)$. That is, it requires more iterations at starting time levels, even for
smooth initial data, which contrasts sharply with the standard parabolic counterpart \cite{BramblePasciakSammonThomee:1989}.
The proof relies crucially on certain new weighted estimates on the time stepping scheme, which
differ from known existing nonsmooth data error analysis \cite{JinLazarovZhou:SISC2016,Karaa:2018}.
The accuracy and efficiency of the scheme are illustrated by numerical experiments. The
numerical scheme and its rigorous error analysis for both smooth and nonsmooth solutions represent
the main contributions of this work.

The idea of incomplete iterations was first proposed for standard parabolic problems with smooth
solutions in \cite{DouglasDupontEwing:1979,BrambleSammon:1980}, and then extended in
\cite{Keeling:1989,BramblePasciakSammonThomee:1989,DuMing:2008} (including nonsmooth solutions);
see Chapter 11 of the monograph \cite{Thomee:2006} for a detailed discussion. Bramble et al
\cite{BramblePasciakSammonThomee:1989} proposed an incomplete iterative solver for a discrete
scheme based on Galerkin approximation in space and linear multistep backward difference in time,
and derived error estimates for nonsmooth initial data. Due to the nonlocality of the model
\eqref{eqn:fde} and limited smoothing properties, the analysis in these works does not
apply to problem \eqref{eqn:fde}.

The rest of the paper is organized as follows. In Section \ref{sec:prelim}, we describe the IIS.
Then in Sections \ref{sec:smooth} and \ref{sec:nonsmooth}, we analyze the scheme
for smooth and nonsmooth solutions, respectively. Finally, some numerical results are presented in
Section \ref{sec:numer} to complement the analysis. In two appendices, we collect useful basic estimates
and technical proofs. Throughout, the notation $c$ denotes a generic constant, which may differ at each
occurrence, but it is always independent of the time step size $\tau$ and mesh size $h$.

\section{The incomplete iterative scheme}\label{sec:prelim}
\subsection{Fully discrete scheme}
First, we describe a spatially semidiscrete scheme for problem \eqref{eqn:fde} based on the
Galerkin FEM. Let $\mathcal{T}_h$ be a shape regular quasi-uniform triangulation of the
domain $\Omega $ into $d$-simplexes, denoted by $T$, with a mesh size $h$. Over $\mathcal{T}_h$,
we define a continuous piecewise linear finite element space $X_h$ by
\begin{equation*}
  X_h= \left\{v_h\in H_0^1(\Omega):\ v_h|_T \mbox{ is a linear function},\ \forall\, T \in \mathcal{T}_h\right\}.
\end{equation*}
We define the $L^2(\Omega)$ projection $P_h:L^2(\Omega)\to X_h$ and
Ritz projection $R_h:H_0^1(\Omega)\to X_h$ by
\begin{equation*}
   \begin{aligned}
     (P_h \varphi,\chi) &=(\varphi,\chi) , &&\forall\, \chi\in X_h,\\
     (\nabla R_h\varphi,\nabla\chi) & = (\nabla \varphi,\nabla \chi),&& \forall\, \chi\in X_h,
   \end{aligned}
\end{equation*}
respectively, where $(\cdot,\cdot)$ denotes the $L^2(\Omega)$ inner product.

The semidiscrete Galerkin FEM for \eqref{eqn:fde} is to find $u_h(t)\in X_h$ such that
\begin{equation}\label{eqn:fem}
  (\Dal u_h,\chi) + (\nabla u_h,\nabla \chi) = (f,\chi),\quad\forall\, \chi\in X_h,\quad t>0,
\end{equation}
with $u_h(0)=v_h\in X_h$. Let $A_h: X_h\to X_h$ be the negative discrete Laplacian, i.e., $(A_h\varphi_h,\chi)
=(\nabla\varphi_h,\nabla\chi)$, for all $\varphi_h, \chi\in X_h.$ Then we rewrite \eqref{eqn:fem} as
\begin{equation}\label{eqn:semi}
  \Dal u_h(t)+A_h u_h(t) = f_h(t) , \quad \forall\, t>0,
\end{equation}
with $u_h(0)=v_h\in X_h$ and $f_h(t)=P_hf(t)$. The following identity holds
\begin{equation}\label{eqn:AR=PA}
  A_hR_h = P_h A.
\end{equation}

Next we partition the time interval $[0,T]$ uniformly, with grid points $t_n=n\tau$, $n=0,\ldots,N$, and
a time step size $\tau=T/N$. Recall the Riemann-Liouville derivative $^R\partial_t^\alpha\varphi
(t)=\frac{\rm d}{\rm d t}\frac{1}{\Gamma(1-\alpha)}\int_0^t(t-s)^{-\alpha}\varphi(s){\rm d} s$. The
backward Euler CQ for $^R\partial_t^\alpha
\varphi(t_n)$ is given by (with $\varphi^j=\varphi(t_j)$):
\begin{equation*}
  \bar\partial_\tau^\alpha \varphi^n = \tau^{-\alpha} \sum_{j=0}^nb_j^{(\alpha)}\varphi^{n-j},\quad\mbox{ with } (1-\xi)^\alpha=\sum_{j=0}^\infty b_j^{(\alpha)}\xi^j .
\end{equation*}
An estimate on $b_j^{(\alpha)}$ is given in Lemma \ref{lem:bdd-b} in Appendix A.
Since $\partial_t^\alpha\varphi= {^R\Dal}(\varphi(t)-\varphi(0))$ \cite[p. 91]{KilbasSrivastavaTrujillo:2006},
the fully discrete scheme for \eqref{eqn:fde} reads: Given $U_h^0=v_h\in X_h$, find $U_h^n\in X_h$ such that
\begin{align}\label{eqn:fully}
  \bar \partial_\tau^\alpha (U_h^n-U_h^0)+A_hU_h^n=f_h^n,\quad n=1,2,\ldots,N,
\end{align}
with $f_h^n=P_hf(t_n)$. The solution of \eqref{eqn:fully} can be represented by
\begin{equation}\label{eqn:sol-fully}
  U_h^n = F_{h,\tau}^n v_h + \tau \sum_{j=1}^n E_{h,\tau}^{n-j}f_h^j,
\end{equation}
where solution operators $F_{h,\tau}^n$ and $E_{h,\tau}^n$ are defined by
\begin{align*}
F_{h,\tau}^n &=\frac{1}{2\pi\mathrm{i}}
\int_{\Gamma_{\theta,\delta}^\tau } e^{z\tau(n-1)} \delta_\tau(e^{-z\tau})^{\alpha-1}({ \delta_\tau
(e^{-z\tau})^\alpha}+A_h)^{-1}\d z,\\  %{\color{red} double~ check~ it}\\
E_{h,\tau}^n &= \frac{1}{2\pi\mathrm{i}}\int_{\Gamma_{\theta,
\delta}^\tau } e^{zn\tau} ({ \delta_\tau(e^{-z\tau})^\alpha}+A_h)^{-1}\,\d z,
\end{align*}
respectively, with
$\delta_\tau(\xi)=(1-\xi)/\tau$, $\Gamma_{\theta,\delta}^\tau :=\{ z\in \Gamma_{\theta,\delta}:
|\Im(z)|\le {\pi}/{\tau} \}$, and $\Gamma_{\theta,\delta}$ (oriented counterclockwise) defined by
(for $\theta\in(\frac{\pi}{2},\pi)$)
\begin{equation}\label{eqn:Gamma}
  \Gamma_{\theta,\delta}=\left\{z\in \mathbb{C}: |z|=\delta, |\arg z|\le \theta\right\}\cup
  \{z\in \mathbb{C}: z=\rho e^{\pm\mathrm{i}\theta}, \rho\ge \delta\} .
\end{equation}
Note that the formula for $F_\tau$ corrects a typo in \cite{JinLiZhou:2019mc}.

The solution operators $F_{h,\tau}^n$ and $E_{h,\tau}^n$ satisfy the following smoothing properties, where
$\|\cdot\|$ denotes the operator norm on $L^2(\Omega)$. The proof is standard, see, e.g.,
\cite{LubichSloanThomee:1996,JinLiZhou:2017}, and hence it is omitted.
\begin{lemma}\label{lem:stab-sol-op} For any $\beta\in[0,1]$, there hold
\begin{align*}
  \|A_h^\beta F_{h,\tau}^n\|\leq ct_{n+1}^{-\beta\alpha}, \quad
  \|A_h^\beta E_{h,\tau}^n\|\leq ct_{n+1}^{(1-\beta)\alpha-1}\quad\mbox{and}\quad \|A_h^\beta
  \bar\partial_\tau E_{h,\tau}^n\| \leq ct_{n+1}^{(1-\beta)\alpha-2}.
\end{align*}
\end{lemma}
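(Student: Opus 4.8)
The statement is a standard resolvent-estimate result for backward Euler convolution quadrature, and the plan is to reduce everything to bounds on the contour integral defining $F_{h,\tau}^n$ and $E_{h,\tau}^n$ after a change of variables. The key ingredients are: (i) the sectorial estimate $\|(z+A_h)^{-1}\| \le c|z|^{-1}$ and, more generally, $\|A_h^\beta(z+A_h)^{-1}\| \le c|z|^{\beta-1}$ for $z$ in a sector $|\arg z|\le\theta'$ with $\theta'\in(\pi/2,\pi)$, which holds uniformly in $h$ because $A_h$ is selfadjoint positive definite; and (ii) the symbol estimates for the backward Euler kernel $\delta_\tau(e^{-z\tau}) = (1-e^{-z\tau})/\tau$, namely that there are constants such that $c_1|z| \le |\delta_\tau(e^{-z\tau})| \le c_2|z|$ and $\delta_\tau(e^{-z\tau}) \in \Sigma_{\theta'}$ for all $z\in\Gamma_{\theta,\delta}^\tau$, provided $\delta$ is chosen small and $\theta$ close enough to $\pi/2$; these are classical (see Lubich--Sloan--Thom\'ee). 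Combining (i) and (ii) gives $\|A_h^\beta(\delta_\tau(e^{-z\tau})^\alpha + A_h)^{-1}\| \le c|z|^{\alpha(\beta-1)}$ on the contour.

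With these in hand I would estimate each operator directly. For $F_{h,\tau}^n$ write
\[
  \|A_h^\beta F_{h,\tau}^n\| \le c\int_{\Gamma_{\theta,\delta}^\tau} e^{\Re(z)\tau(n-1)} |z|^{\alpha-1} |z|^{\alpha(\beta-1)}\,|\d z|
  = c\int_{\Gamma_{\theta,\delta}^\tau} e^{\Re(z)\tau(n-1)} |z|^{\alpha\beta-1}\,|\d z|,
\]
and similarly for $E_{h,\tau}^n$ the integrand is $e^{\Re(z)n\tau}|z|^{\alpha(\beta-1)}$, while for $\bar\partial_\tau E_{h,\tau}^n$ one picks up an extra factor $|\delta_\tau(e^{-z\tau})| \le c|z|$, giving integrand $e^{\Re(z)n\tau}|z|^{\alpha(\beta-1)+1}$. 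Each of these is handled by the standard device of splitting $\Gamma_{\theta,\delta}^\tau$ into the circular arc $|z|=\delta$ and the two rays, choosing $\delta = c/t_{n+1}$ (permissible since $t_{n+1}\le T$ and $\delta$ only needs an upper bound), using $\Re(z)\le -c|z|$ on the rays so that $\int_\delta^\infty e^{-c\rho t_n}\rho^{s}\,\d\rho \le c\,t_{n+1}^{-s-1}$ for the exponents $s$ arising above, and bounding the arc contribution by $c\delta^{s+1}e^{c\delta t_n} \le c\,t_{n+1}^{-s-1}$. Matching $s+1$ against $\alpha\beta$, $(1-\beta)\alpha-1+1 = (1-\beta)\alpha$, and $(1-\beta)\alpha$ (with the extra $|z|$) respectively yields exactly the three claimed powers $t_{n+1}^{-\beta\alpha}$, $t_{n+1}^{(1-\beta)\alpha-1}$, $t_{n+1}^{(1-\beta)\alpha-2}$. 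The truncation $|\Im z|\le\pi/\tau$ only helps (it shortens the ray) and costs nothing.

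The one genuinely delicate point, and the part I would treat most carefully, is the symbol estimate (ii): one must verify that the image of the truncated contour $\Gamma_{\theta,\delta}^\tau$ under $z\mapsto\delta_\tau(e^{-z\tau})$ stays in a fixed sector $\Sigma_{\theta'}$ with $\theta'<\pi$ and satisfies the two-sided bound $c_1|z|\le|\delta_\tau(e^{-z\tau})|\le c_2|z|$ with constants independent of $\tau$. This is where the specific geometry of $\Gamma_{\theta,\delta}$ — in particular the restriction $|\Im z|\le\pi/\tau$ that keeps $z\tau$ in a horizontal strip of width $\le 2\pi$ — is used, together with smallness of $\delta\tau$ and the choice of $\theta$. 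Since this is entirely classical and independent of the data $v,f$, the paper rightly omits it; in the write-up I would simply cite \cite{LubichSloanThomee:1996,JinLiZhou:2017} for this symbol estimate and then carry out the contour splitting above, which is routine.
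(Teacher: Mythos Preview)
Your proposal is correct and follows exactly the standard contour-integral argument the paper has in mind: the paper omits the proof entirely, citing \cite{LubichSloanThomee:1996,JinLiZhou:2017}, and your outline (resolvent bound plus the symbol estimate \eqref{eqn:gen}, contour split with $\delta\sim c/t_{n+1}$) is precisely that argument, consistent with the tools the paper itself uses in Appendix~\ref{app:reg}. The only cosmetic slip is in your ``matching $s+1$'' sentence, where the exponent for the third operator should read $1+(1-\beta)\alpha$ rather than $(1-\beta)\alpha$, but your final powers are stated correctly.
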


\subsection{Incomplete iterative scheme (IIS)}\label{ssec:iter}
At each time level, the scheme \eqref{eqn:fully} requires solving a linear system. This can be
expensive for large-scale problems, e.g., three-dimensional problems. Hence, it is of much interest to develop efficient algorithms
that solve \eqref{eqn:fully} inexactly while maintaining the overall accuracy (in terms of convergence rate). In this work, we
propose an incomplete iterative BE scheme, by approximately solving the resulting linear systems.
Given $U_h^0$, $U_h^1,\ldots,U_h^{n-1}$, we use an iterative method to find an approximation to the solution
$\overline{U}_h^n$ of
\begin{equation}\label{eqn:lin}
  ( I+ \tau^\alpha A_h)\overline{U}_h^n = \tau^\alpha f_h^n - \sum_{j=1}^{n}b_j^{(\alpha)}U_h^{n-j} + \sum_{j=0}^{n}b_j^{(\alpha)} U_h^{0} ,
\end{equation}
with a starting guess $U_h^{n,0}$. Below we employ a second-order extrapolation:
\begin{equation}\label{eqn:Uhn0}
  U_h^{n,0}= 2U_h^{n-1}-U_h^{n-2},\quad n\geq 2.
\end{equation}
At time level $n$, an iterative method gives a sequence $U_h^{n,m}$  convergent to $\bar U_h^n$
as the iteration number $m\to \infty$. The IIS is given by setting
\begin{equation}\label{eqn:fully2}
    U_h^n = U_h^{n,M_n},
\end{equation}
for some parameter $M_n\in\mathbb{N}$, which may vary with $n$ and is to be specified.

The convergence analysis requires a certain contraction condition. We introduce a
weighted (energy like) norm $|\cdot|$ on the space $X_h$ defined by
\begin{equation}\label{eqn:norm-e}
    |\psi|=  \|  (I + \tau^\alpha A_h)^\frac{1}{2} \psi  \|_{L^2(\Omega)},\quad \forall \psi\in X_h.
\end{equation}
We assume that there exist $\kappa \in(0,1)$ and $c_0>0$:
\begin{equation}\label{eqn:Bhtau}
    |U_h^{n,m}-\overline{U}_h^n| \le c_0 \kappa^m |U_h^{n,0}-\overline{U}_h^n| \quad \text{for}~~m\ge1.
\end{equation}
The contraction property in the weighted norm $|\cdot|$ arises naturally in the study of many iterative solvers,
e.g., Krylov subspace methods \cite{Saad:2003}, multigrid methods \cite{Hackbusch:1985} and domain decomposition methods
\cite{ToselliWidlund:2005}. The constant $\kappa$ is related to the condition number of preconditioned
systems. The nonstandard norm $|\cdot|$ poses the main technical challenge in the analysis.

\section{Error analysis for smooth solutions}\label{sec:smooth}
Now we analyze the scheme \eqref{eqn:fully2} for smooth solutions, to give a first glance into its performance.
The more challenging case of nonsmooth solutions is deferred to Section \ref{sec:nonsmooth}. The analysis
below relies on two stability results on the time-stepping scheme \eqref{eqn:fully}. First, it satisfies the maximal $\ell^p$
regularity \cite[Theorem 5]{JinLiZhou:2018nm}. For any $1\leq p<\infty$, the norm $\|\cdot\|_{\ell^p(X)}$
of a sequence $(v_j)_{j=1}^n\subset X$ is defined by
$$\|(v_j)_{j=1}^n\|_{\ell^p(X)} = \big(\tau\sum_{j=1}^n\|v_j\|_{X}^p\big)^{1/p}.$$
\begin{lemma}\label{lem:max-lp}
For the solution $U_h^n$ of \eqref{eqn:fully} with $v_h=0$, there holds
\begin{equation*}
  \|(\bar\partial_\tau^\alpha U_h^j)_{j=1}^n\|_{\ell^p(L^2(\Omega))} + \|(A_h U_h^j)_{j=1}^n\|_{\ell^p(L^2(\Omega))} \leq c\|(f_h^j)_{j=1}^n\|_{\ell^p(L^2(\Omega))}, \ \ \forall 1<p<\infty.
\end{equation*}
\end{lemma}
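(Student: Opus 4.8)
The plan is to derive the estimate from the operator-theoretic characterization of discrete maximal regularity, following the now-standard route pioneered by Lubich and collaborators and adapted to fractional equations. First I would pass to the generating-function (discrete Laplace / $\mathcal{Z}$-transform) picture: writing $\widetilde{U}_h(\xi)=\sum_{n\ge1}U_h^n\xi^n$ and similarly $\widetilde{f}_h(\xi)$, the scheme \eqref{eqn:fully} with $v_h=0$ becomes $\bigl(\delta_\tau(\xi)^\alpha+A_h\bigr)\widetilde{U}_h(\xi)=\widetilde{f}_h(\xi)$, so that $A_h\widetilde{U}_h(\xi)=K(\xi)\widetilde{f}_h(\xi)$ with operator symbol $K(\xi)=A_h\bigl(\delta_\tau(\xi)^\alpha+A_h\bigr)^{-1}$, and likewise $\bar\partial_\tau^\alpha U_h$ has symbol $\delta_\tau(\xi)^\alpha\bigl(\delta_\tau(\xi)^\alpha+A_h\bigr)^{-1}=I-K(\xi)$. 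The problem is thus reduced to an $\ell^p(L^2(\Omega))$-boundedness statement for the discrete convolution operator with kernel given by the Fourier/Laurent coefficients of $\xi\mapsto K(e^{i\theta})$ on the unit circle.

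The key step is then a discrete operator-valued Fourier multiplier theorem — the $\ell^p$ analogue of the Weis multiplier theorem, due to Blunck, which requires that the symbol $\theta\mapsto K(e^{i\theta})$ together with its derivative (in the sense $\theta\mapsto(e^{i\theta}-1)\frac{d}{d\theta}K(e^{i\theta})$, or equivalently the difference $K(e^{i\theta})-K(e^{i\theta'})$) be $R$-bounded on $L^2(\Omega)$ uniformly in $\tau$. Since $L^2(\Omega)$ is a Hilbert space, $R$-boundedness collapses to plain uniform boundedness, which is the decisive simplification. To verify it I would use two facts: (i) the sectoriality of the discrete Laplacian $A_h$, which is a positive self-adjoint operator and hence generates a bounded analytic semigroup with the resolvent bound $\|(\lambda+A_h)^{-1}\|\le c|\lambda|^{-1}$ on any sector $|\arg\lambda|\le\pi-\epsilon$, uniformly in $h$; and (ii) the standard symbol estimate for backward Euler CQ, namely that $\delta_\tau(e^{-z})^\alpha = z^\alpha(1+O(z))$ maps a neighborhood of the relevant arc of the unit circle into a sector $|\arg w|\le\alpha\theta<\pi/2\cdot\text{something}$ staying away from the negative real axis, so that $\delta_\tau(\xi)^\alpha$ lies in the sector of analyticity of $-A_h$ and the resolvent $\bigl(\delta_\tau(\xi)^\alpha+A_h\bigr)^{-1}$ is well-defined with the bound $\|A_h(\delta_\tau(\xi)^\alpha+A_h)^{-1}\|\le c$. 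Combining these gives uniform boundedness of $K(e^{i\theta})$ and, after differentiating the resolvent identity and using the analogous bound $|\tfrac{d}{d\theta}\delta_\tau(e^{i\theta})^\alpha|\le c|\delta_\tau(e^{i\theta})^\alpha|$ for the CQ symbol, of its regularized derivative; invoking Blunck's theorem then yields the $\ell^p$ bound for $\|(A_hU_h^j)\|_{\ell^p(L^2)}$, and since $\bar\partial_\tau^\alpha U_h^j = f_h^j - A_hU_h^j$ the bound on the first term follows by the triangle inequality.

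The main obstacle I anticipate is the careful verification of the symbol/sectoriality condition near the singular point $\xi=1$ (equivalently $\theta=0$), where $\delta_\tau(\xi)\to0$: one must show the CQ symbol $\delta_\tau(e^{i\theta})^\alpha$ stays inside a fixed sector of angle strictly less than $\pi$ (so that it remains in the resolvent set of $-A_h$) uniformly as both $\theta\to0$ and $\tau\to0$, and that the derivative estimate does not blow up there. This is precisely the place where the $A(\vartheta)$-stability / sectoriality of backward Euler enters, and it is somewhat delicate because the restriction $|\Im z|\le\pi/\tau$ in the contour $\Gamma_{\theta,\delta}^\tau$ corresponds to working only on a compatible arc of the unit circle; I would handle it by the standard estimate $c_1|\xi-1|/\tau \le |\delta_\tau(\xi)| \le c_2|\xi-1|/\tau$ together with $\arg\delta_\tau(e^{i\theta})^\alpha = \alpha\arg\delta_\tau(e^{i\theta})$ lying in a fixed closed subinterval of $(-\pi/2,\pi/2)$. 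Since the statement is asserted in the excerpt to be essentially \cite[Theorem 5]{JinLiZhou:2018nm}, in the write-up I would either cite that result directly or reproduce this Fourier-multiplier argument in condensed form, with the sectoriality check as the only nontrivial ingredient.
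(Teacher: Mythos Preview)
Your proposal is correct, and in fact more detailed than what the paper itself does: the paper offers no proof of Lemma \ref{lem:max-lp} at all, simply citing \cite[Theorem~5]{JinLiZhou:2018nm}. The argument you sketch --- passing to generating functions, identifying the operator symbol $K(\xi)=A_h(\delta_\tau(\xi)^\alpha+A_h)^{-1}$, and invoking Blunck's discrete operator-valued multiplier theorem with $R$-boundedness reducing to uniform boundedness on the Hilbert space $L^2(\Omega)$ --- is precisely the route taken in that reference, so your plan and the paper's (outsourced) proof coincide.

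One small correction of language: your sectoriality check is right in substance but slightly misstated. For $\xi=e^{i\theta}$ one has $\arg\delta_\tau(e^{i\theta})=\tfrac{\theta}{2}-\tfrac{\pi}{2}\operatorname{sgn}\theta$, which ranges over the \emph{open} interval $(-\tfrac{\pi}{2},\tfrac{\pi}{2})$ and is not bounded away from $\pm\tfrac{\pi}{2}$ as $\theta\to0$. It is only after taking the $\alpha$th power that $\arg\delta_\tau(e^{i\theta})^\alpha\in[-\tfrac{\alpha\pi}{2},\tfrac{\alpha\pi}{2}]$ lies in a closed subsector of the right half-plane, because $\alpha<1$; this is what keeps $\delta_\tau(\xi)^\alpha$ uniformly inside the sector of analyticity of the resolvent and makes the estimate go through. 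You clearly had this in mind, but the phrasing ``closed subinterval of $(-\pi/2,\pi/2)$'' should be attributed to $\delta_\tau^\alpha$, not to $\delta_\tau$ itself.
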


The following stability estimate of the scheme \eqref{eqn:fully} is useful.
\begin{lemma}\label{lem:stab01}
Let $U_h^n$ be the  solution of \eqref{eqn:fully} with $v_h=0$. Then
\begin{align*}
    \|  U_h^n  \|_{L^2(\Omega)} + \| ( \nabla U_h^{j})_{j=1}^n \|_{\ell^q(L^2(\Omega))} & \le %c  (\| v \|_{  L^2(\Omega)} +
    c\| (A_h^{-\frac12}f_h^{j})_{j=1}^n \|_{\ell^q(L^2(\Omega))}, \ \ \forall q\in (\tfrac{2}{\alpha},\infty).
\end{align*}
\end{lemma}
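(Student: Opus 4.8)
The plan is to obtain the estimate from the maximal $\ell^p$ regularity of Lemma~\ref{lem:max-lp} combined with the solution representation \eqref{eqn:sol-fully}, exploiting that $A_h^{-1/2}$ gains half a spatial derivative. First I would reduce to the homogeneous-data situation that Lemma~\ref{lem:max-lp} addresses: with $v_h=0$ the solution is $U_h^n=\tau\sum_{j=1}^n E_{h,\tau}^{n-j}f_h^j$, and since $A_h$ commutes with all the discrete solution operators, the sequence $(w_h^n)$ with $w_h^n := A_h^{-1/2}U_h^n$ solves the same scheme \eqref{eqn:fully} with right-hand side $A_h^{-1/2}f_h^n$. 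Applying Lemma~\ref{lem:max-lp} to $(w_h^n)$ then gives $\|(A_h w_h^j)_{j=1}^n\|_{\ell^p(L^2)} = \|(A_h^{1/2}U_h^j)_{j=1}^n\|_{\ell^p(L^2)} \le c\|(A_h^{-1/2}f_h^j)_{j=1}^n\|_{\ell^p(L^2)}$ for any $1<p<\infty$. Since $\|A_h^{1/2}\psi\|_{L^2} = \|\nabla\psi\|_{L^2}$ for $\psi\in X_h$, this already yields the second term $\|(\nabla U_h^j)_{j=1}^n\|_{\ell^q(L^2)}$ on the left for any $q\in(1,\infty)$; the restriction $q>2/\alpha$ is not needed for that piece, so the real work is the pointwise-in-time bound on $\|U_h^n\|_{L^2}$.

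For the first term I would estimate $\|U_h^n\|_{L^2}$ directly from $U_h^n=\tau\sum_{j=1}^n E_{h,\tau}^{n-j}f_h^j$ by writing $E_{h,\tau}^{n-j}f_h^j = (A_h^{1/2}E_{h,\tau}^{n-j})(A_h^{-1/2}f_h^j)$ and invoking the smoothing estimate $\|A_h^{1/2}E_{h,\tau}^{m}\|\le c\, t_{m+1}^{\alpha/2-1}$ from Lemma~\ref{lem:stab-sol-op} (with $\beta=1/2$). This gives
\begin{equation*}
  \|U_h^n\|_{L^2} \le c\,\tau\sum_{j=1}^n t_{n-j+1}^{\alpha/2-1}\,\|A_h^{-1/2}f_h^j\|_{L^2}.
\end{equation*}
Now apply the discrete H\"older inequality with exponents $q$ and its conjugate $q'$: the kernel contribution is $\big(\tau\sum_{j=1}^n t_{n-j+1}^{(\alpha/2-1)q'}\big)^{1/q'}$, and this is finite uniformly in $n$ (bounded by $\big(\int_0^T s^{(\alpha/2-1)q'}\,ds\big)^{1/q'}$, up to the usual care with the first term $t_1=\tau$) precisely when $(\alpha/2-1)q' > -1$, i.e. $q' < 2/(2-\alpha)$, equivalently $q > 2/\alpha$. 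This is exactly the stated range, and it pins down where that hypothesis enters. Then $\|U_h^n\|_{L^2}\le c\,\|(A_h^{-1/2}f_h^j)_{j=1}^n\|_{\ell^q(L^2)}$, uniformly in $n$.

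Combining the two bounds gives the lemma. The main technical point to be careful about is the singular kernel $t_{n-j+1}^{\alpha/2-1}$ at $j=n$ (the term $t_1^{\alpha/2-1}=\tau^{\alpha/2-1}$): one must check that $\tau\cdot\tau^{(\alpha/2-1)q'}$ summed is controlled by the integral comparison, which works since $(\alpha/2-1)q'+1>0$ in the admissible range; alternatively one treats the $j=n$ term separately using $\tau\|A_h^{1/2}E_{h,\tau}^0\|\le c\tau^{\alpha/2}$ and H\"older. I do not expect a genuine obstacle here — the argument is a standard convolution estimate — but the bookkeeping on the endpoint of the kernel sum and the sharp identification of the exponent range $q\in(2/\alpha,\infty)$ is the one place that requires attention rather than routine manipulation.
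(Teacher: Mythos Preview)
Your proposal is correct and follows essentially the same route as the paper: the pointwise bound on $\|U_h^n\|_{L^2(\Omega)}$ comes from the representation \eqref{eqn:sol-fully}, the smoothing estimate $\|A_h^{1/2}E_{h,\tau}^{m}\|\le c\,t_{m+1}^{\alpha/2-1}$ of Lemma~\ref{lem:stab-sol-op}, and then H\"older (the paper says ``Young's inequality'') on the discrete convolution, with the summability of the kernel dictating the range $q>2/\alpha$; the gradient bound is obtained from Lemma~\ref{lem:max-lp} applied to the shifted sequence $A_h^{-1/2}U_h^n$, which is exactly what the paper's one-line ``due to Lemma~\ref{lem:max-lp}'' means. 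Your write-up is simply more explicit about the auxiliary sequence $w_h^n=A_h^{-1/2}U_h^n$ and about the endpoint term $j=n$, but there is no substantive difference in strategy.
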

\begin{proof}
By the representation \eqref{eqn:sol-fully}, we have
\begin{equation*}
  \begin{aligned}
    \|U_h^n\|_{L^2(\Omega)} & \leq \tau\sum_{j=1}^n\|E_{h,\tau}^{n-j}f_h^j\|_{L^2(\Omega)}
      \leq \tau\sum_{j=1}^n\|A_h^\frac12 E_{h,\tau}^{n-j}\|\|A_h^{-\frac12}f_h^j\|_{L^2(\Omega)}.
  \end{aligned}
\end{equation*}
Now for any $q>\frac{2}{\alpha}$, $(\frac\alpha2-1)\frac{q}{q-1}>-1$, and thus
$\tau\sum_{j=1}^n (t_{n+1}-t_j)^{(\frac\alpha2-1)\frac{q}{q-1}} <\infty$, cf. Lemma \ref{lem:basic-est1} in the appendix.
Next, by Lemma \ref{lem:stab-sol-op} and Young's inequality,
\begin{align*}
   \|U_h^n\|_{L^2(\Omega)} & \leq c\tau\sum_{j=1}^n(t_{n+1}-t_j)^{\frac{\alpha}{2}-1}\|A_h^{-\frac12}f_h^j\|_{L^2(\Omega)}\\
    & \leq c\|(A_h^{-\frac12}f_h^{j})_{j=1}^n \|_{\ell^q(L^2(\Omega))}<\infty.
\end{align*}
The bound on $\|(\nabla U_h^j)_{j=1}^n\|_{\ell^q(L^2(\Omega))}$ is due to Lemma \ref{lem:max-lp}. \qed
\end{proof}

Now we give an error estimate on the time-stepping scheme \eqref{eqn:fully} for smooth solutions, which serves
as a benchmark for the scheme \eqref{eqn:fully2}.
\begin{theorem}\label{thm:error-smooth-sol}
Let $u$ be the solution to \eqref{eqn:fde}, and $U_h^n$ be the solution of \eqref{eqn:fully} with
$v_h= R_h v$. If $u\in C^2([0,T];H_0^1(\Omega))\cap C^1([0,T];D(A))$, then
\begin{equation*}
 \| U_h^n- u(t_n) \|_{L^2(\Omega)} \le c(u) (h^2 + \tau).
\end{equation*}
\end{theorem}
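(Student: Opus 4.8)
The plan is to compare the fully discrete solution with the Ritz projection of the exact solution and then estimate the resulting defect through the representation \eqref{eqn:sol-fully}. Write $U_h^n - u(t_n) = \vartheta^n + \rho^n$, with $\vartheta^n = U_h^n - R_h u(t_n)$ and $\rho^n = R_h u(t_n) - u(t_n)$. The second piece is controlled by the standard Ritz estimate $\|\rho^n\|_{L^2(\Omega)} \le ch^2\|u(t_n)\|_{H^2(\Omega)} \le ch^2\|Au(t_n)\|_{L^2(\Omega)}$, which is $\le c(u)h^2$ since $u\in C([0,T];D(A))$. For $\vartheta^n$ I would first derive an error equation: inserting $R_h u(t_n)$ into the left side of \eqref{eqn:fully}, using the identity \eqref{eqn:AR=PA} (i.e.\ $A_h R_h u(t_n) = P_h A u(t_n)$) together with the identity $\Dal u(t_n) + Au(t_n) = f(t_n)$, and subtracting \eqref{eqn:fully}, one obtains $\bar\partial_\tau^\alpha(\vartheta^n-\vartheta^0) + A_h\vartheta^n = g^n$ with $\vartheta^0 = 0$ (because $U_h^0 = R_h v = R_h u(0)$) and
\[
  g^n = (P_h - R_h)\Dal u(t_n) + R_h\big(\Dal u(t_n) - \bar\partial_\tau^\alpha(u(t_n) - u(0))\big).
\]
By \eqref{eqn:sol-fully}, applied with $v_h = 0$ and source $g^j$, this gives $\vartheta^n = \tau\sum_{j=1}^n E_{h,\tau}^{n-j}g^j$.

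The heart of the matter is to bound $\|g^n\|_{L^2(\Omega)}$, i.e.\ to control the consistency error of the backward Euler CQ for a smooth datum that need not vanish at $t=0$. For the first term, $\Dal u(t_n) = \tfrac1{\Gamma(1-\alpha)}\int_0^{t_n}(t_n-s)^{-\alpha}u'(s)\,\d s$ lies in $D(A)$ with $\|\Dal u(t_n)\|_{D(A)} \le c t_n^{1-\alpha}\|u'\|_{C([0,T];D(A))}$, so the projection bound yields $\|(P_h - R_h)\Dal u(t_n)\|_{L^2(\Omega)} \le ch^2\|\Dal u(t_n)\|_{H^2(\Omega)} \le c(u)h^2$. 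For the second term, the $H^1$-stability of the Ritz projection (immediate from its defining property and Poincar\'e's inequality) reduces matters to the CQ consistency error in the $H_0^1(\Omega)$ norm. Splitting $u(t) - u(0) = u'(0)\,t + r(t)$ with $r(0) = r'(0) = 0$ and $r'' = u''$, using the explicit CQ image of the linear function and the standard first-order estimate for functions vanishing to first order at $t=0$ (both resting on the coefficient bounds in Lemma \ref{lem:bdd-b}), I expect
\[
  \big\|\Dal u(t_n) - \bar\partial_\tau^\alpha(u(t_n)-u(0))\big\|_{H_0^1(\Omega)} \le c\tau\Big(t_n^{-\alpha}\|u'(0)\|_{H_0^1(\Omega)} + \int_0^{t_n}(t_n-s)^{-\alpha}\|u''(s)\|_{H_0^1(\Omega)}\,\d s\Big),
\]
and hence, using $u\in C^2([0,T];H_0^1(\Omega))$ for the integral term, $\|g^n\|_{L^2(\Omega)} \le c(u)(h^2+\tau) + c\tau t_n^{-\alpha}\|u'(0)\|_{H_0^1(\Omega)}$.

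To finish I would substitute this bound into $\vartheta^n = \tau\sum_{j=1}^n E_{h,\tau}^{n-j}g^j$ and use $\|E_{h,\tau}^{n-j}\| \le c t_{n+1-j}^{\alpha-1}$ from Lemma \ref{lem:stab-sol-op} (with $\beta=0$). The bounded part of $g^j$ contributes $\le c(u)(h^2+\tau)\,\tau\sum_{j=1}^n t_{n+1-j}^{\alpha-1} \le c(u)(h^2+\tau)$, while the singular part contributes $\le c\|u'(0)\|_{H_0^1(\Omega)}\,\tau^2\sum_{j=1}^n t_{n+1-j}^{\alpha-1}t_j^{-\alpha}$, which by the discrete Beta-type bound $\tau\sum_{j=1}^n t_{n+1-j}^{\alpha-1}t_j^{-\alpha}\le c$ (Lemma \ref{lem:basic-est1}) is $\le c\|u'(0)\|_{H_0^1(\Omega)}\tau$. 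Combining with the bound on $\rho^n$ gives $\|U_h^n - u(t_n)\|_{L^2(\Omega)}\le c(u)(h^2+\tau)$. The main obstacle is the consistency-error step: a nonvanishing $u'(0)$ unavoidably produces a $t_n^{-\alpha}$ weight in $g^n$, and the crucial point is that this singularity is precisely absorbed by the smoothing kernel $E_{h,\tau}^{n-j}$ through the Beta-type convolution bound, so that first-order accuracy in $\tau$ is retained. This absorption mechanism is the prototype of the weighted time-stepping estimates needed later for the incomplete iterative scheme.
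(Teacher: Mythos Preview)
Your argument is correct and in one respect sharper than the paper's. The paper's proof applies Lemma~\ref{lem:stab01} to the error equation for $\vartheta^n$ and invokes \cite{Lubich:1986} for the \emph{uniform} consistency bound $\|(\bar\partial_\tau^\alpha-\Dal)(u(t_n)-v)\|_{H_0^1(\Omega)}\le c\tau\|u\|_{C^2([0,T];H_0^1(\Omega))}$, after which the $\ell^q$ stability estimate finishes the job. You take a different route in two ways: you retain the honest $t_n^{-\alpha}$ contribution to $g^n$ coming from $u'(0)\neq 0$, and you bypass Lemma~\ref{lem:stab01} altogether, using instead the representation $\vartheta^n=\tau\sum_j E_{h,\tau}^{n-j}g^j$, the $\beta=0$ smoothing bound of Lemma~\ref{lem:stab-sol-op}, and the discrete Beta-type estimate of Lemma~\ref{lem:basic-est1}. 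The benefit of your approach is transparency: the kernel weight $t_{n+1-j}^{\alpha-1}$ visibly absorbs the weak singularity $t_j^{-\alpha}$ through $\sum_{j=1}^n(n{+}1{-}j)^{\alpha-1}j^{-\alpha}\le c$, recovering the full $O(\tau)$ rate without any compatibility condition on $u'(0)$. The paper's $\ell^q$ route is shorter when the uniform consistency bound is legitimate---which, strictly, requires the additional vanishing $u'(0)=0$ implicit in Remark~\ref{rmk:smooth-sol}---but would otherwise lose a fractional power of $\tau$.
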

\begin{proof}
In a customary way, we split the error $e^n\equiv U_h^n-u(t_n)$ into
\begin{equation*}
    e^n = (U_h^n-R_h u(t_n)) + (R_h u(t_n)-u(t_n)) =: \vartheta^n + \varrho^n.
\end{equation*}
It suffices to bound the terms $\varrho^n$ and $\vartheta^n$. Clearly,
\begin{equation}\label{eqn:rho}
  \|  \varrho^n \|_{L^2(\Omega)} \le ch^2\|u\|_{C([0,T];H^2(\Omega))}.
\end{equation}
It remains to bound $\vartheta^n$. Note that $\vartheta^n$ satisfies $\vartheta^0=0$ and
\begin{align*}
  \bPtau^\alpha \vartheta^n + A_h \vartheta^n & = \bPtau^\al(U_h^n-R_hu(t_n)) + A_h(U_h^n-R_hu(t_n)) \\
     & = \big(\bPtau^\alpha (U_h^n-v_h)+A_hU_h^n\big) - \big(\bPtau^\alpha R_h(u(t_n)-v_h) + A_hR_hu(t_n)\big).
\end{align*}
It follows from the identity \eqref{eqn:AR=PA}, and equations \eqref{eqn:fully} and \eqref{eqn:fde} that
\begin{align*}
  \bPtau^\al \vartheta^n + A_h \vartheta^n & = - \bPtau^\alpha R_h(u(t_n)-v_h) + P_h\partial_t^\alpha(u(t_n)-v)\\
    & = ( P_h - R_h ) \Dal u(t_n) - R_h (\bPtau^\alpha - \Dal )(u(t_n)-v).
\end{align*}
Since the solution $u$ is smooth, by the approximation properties of $R_h$ and $P_h$,
\begin{align}\label{eqn:approx1}
 \|(P_h-R_h)\Dal u(t_n)\|_{L^2(\Omega)} % &\leq \|(P_h-I)\Dal u(t_n)\|_{L^2(\Omega)} + \|( R_h-I)\Dal u(t_n) \|_{L^2(\Omega)}\nonumber \\
  &\le c h^2\|u\|_{C^1([0,T];D(A))},
\end{align}
and further, by the approximation property of $\bar\partial_\tau^\alpha$ to $^R\partial_t^\alpha$ \cite{Lubich:1986} % {\color{red}check the precise statement}
\begin{equation}\label{eqn:approx2}
  \begin{aligned}
   \| R_h (\bPtau^\al - \Dal )(u(t_n)-v)\|_{L^2(\Omega)} & \le \|  (\bPtau^\al - \Dal )(u(t_n)-v)\|_{H_0^1(\Omega)}\\
   & \le c \tau\|u\|_{C^2([0,T];H_0^1(\Omega))}.
  \end{aligned}
\end{equation}
Now since $\vartheta^0=0$, the estimate follows from Lemma \ref{lem:stab01}. \qed
\end{proof}

Next we can state the main result of this part, i.e., convergence rate of the scheme
\eqref{eqn:fully2} for smooth solutions: it can achieve the accuracy of \eqref{eqn:fully},
if a large enough but fixed number $m$ of iterations is taken at each time level. In the
proof, we denote the space $X_h$ equipped with the norm $|\cdot|$ defined in \eqref{eqn:norm-e} by $X_{h,\tau}$.

\begin{theorem}\label{thm:iter-smooth}
Let $u$ and  $U_h^n\equiv U_h^{n,m}$ be the solutions of \eqref{eqn:fde} and \eqref{eqn:Uhn0}-\eqref{eqn:fully2} with
$v_h = R_h v$, respectively,  and let $U_h^1= \overline{U}_h^1$. If $u\in C^2([0,T];H_0^1(\Omega))\cap C^1([0,T];D(A))$,
then there exists a $\delta>0$ such that
\begin{equation*}
 \| U_h^{n}- u(t_n) \|_{L^2(\Omega)} \le c(u) (h^2 + \tau),\quad \text{for}~~ c_0 \kappa^m \le \delta.
\end{equation*}
\end{theorem}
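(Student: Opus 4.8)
The plan is to compare the IIS solution $U_h^{n}=U_h^{n,m}$ with the exact-solver solution $\widetilde U_h^n$ of the fully discrete scheme \eqref{eqn:fully} (the one analyzed in Theorem \ref{thm:error-smooth-sol}), and bound the difference $\rho^n := U_h^n - \widetilde U_h^n$ by a quantity controlled by $c_0\kappa^m$. Since Theorem \ref{thm:error-smooth-sol} already gives $\|\widetilde U_h^n - u(t_n)\|_{L^2(\Omega)}\le c(u)(h^2+\tau)$, a triangle inequality then finishes the proof provided $\|\rho^n\|_{L^2(\Omega)}\le c(u)(h^2+\tau)$ as well, which will follow once we take $\delta$ small enough. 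The first step is therefore to derive the recursion satisfied by $\rho^n$. Subtracting the equation \eqref{eqn:fully} for $\widetilde U_h^n$ from the identity defining $U_h^n$ (i.e. $U_h^n = \overline U_h^n + (U_h^{n,m}-\overline U_h^n)$, where $\overline U_h^n$ solves \eqref{eqn:lin} but with the \emph{computed} history $U_h^0,\dots,U_h^{n-1}$), one finds that $\rho^n$ satisfies a perturbed scheme
\begin{equation*}
  \bar\partial_\tau^\alpha \rho^n + A_h \rho^n = \tau^{-\alpha}(I+\tau^\alpha A_h)\, \zeta^n, \qquad \rho^0=\rho^1=0,
\end{equation*}
where $\zeta^n := U_h^{n,m}-\overline U_h^n$ is the per-step iteration error, which by \eqref{eqn:Bhtau} satisfies $|\zeta^n|\le c_0\kappa^m |U_h^{n,0}-\overline U_h^n|$.

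The second step is to estimate the consistency term $|U_h^{n,0}-\overline U_h^n|$ in the weighted norm. Using the extrapolation start \eqref{eqn:Uhn0}, write $U_h^{n,0}-\overline U_h^n = (2U_h^{n-1}-U_h^{n-2}) - \overline U_h^n$; insert and subtract $2u(t_{n-1})-u(t_{n-2})$ and $u(t_n)$, so this splits into a second-order finite-difference consistency part (bounded by $c\tau^2\|u\|_{C^2}$ after accounting for the norm weight $(I+\tau^\alpha A_h)^{1/2}$, which costs at most a factor $\tau^{-\alpha/2}$, still leaving $O(\tau^{2-\alpha/2})$, harmless), plus $2(U_h^{n-1}-\widetilde u^{n-1}) - (U_h^{n-2}-\widetilde u^{n-2}) - (\overline U_h^n - \widetilde u^n)$ type terms that are controlled by the already-accumulated error $\rho^{j}$ for $j<n$ plus the benchmark error of \eqref{eqn:fully}. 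This is the inductive heart of the argument: one assumes $|\rho^j|\le \text{(something)}$ for $j<n$ and propagates it. The crucial quantitative input is that $\overline U_h^n - \widetilde U_h^n$ depends on the history differences $U_h^{n-j}-\widetilde U_h^{n-j}=\rho^{n-j}$ through the CQ weights $b_j^{(\alpha)}$, whose summability (Lemma \ref{lem:bdd-b}) keeps the accumulation under control.

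The third step is to convert the bound on the right-hand side into a bound on $\|\rho^n\|_{L^2(\Omega)}$ via a stability estimate for the scheme \eqref{eqn:fully} with a forcing of the form $\tau^{-\alpha}(I+\tau^\alpha A_h)\zeta^j$. Since $\|(I+\tau^\alpha A_h)\zeta^j\|_{L^2(\Omega)} \le \|(I+\tau^\alpha A_h)^{1/2}\|\, |\zeta^j| \le \tau^{-\alpha/2}|\zeta^j|$, and $A_h^{-1/2}$ applied to this forcing is bounded by $\tau^{-\alpha}(\|A_h^{-1/2}\zeta^j\| + \tau^\alpha\|A_h^{1/2}\zeta^j\|)\le c\tau^{-\alpha}|\zeta^j|$ up to harmless powers of $\tau$, one can feed this into Lemma \ref{lem:stab01} (the $\ell^q$ stability with $q>2/\alpha$) to get $\|\rho^n\|_{L^2(\Omega)}\le c\,\tau^{-\alpha}\big(\tau\sum_j |\zeta^j|^q\big)^{1/q}$. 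Combining with $|\zeta^j|\le c_0\kappa^m\big(c\tau^{2-\alpha/2} + c\max_{i<j}|\rho^i| + c(u)\tau^{1-\alpha/2}\big)$ from Step 2 yields a discrete Gronwall-type inequality $\max_{j\le n}\|\rho^j\|_{L^2(\Omega)}\le c_0\kappa^m\big(c(u)(h^2+\tau) + c\max_{j<n}\|\rho^j\|_{L^2(\Omega)}\big)$; choosing $\delta$ so that $c\,c_0\kappa^m=c\delta<\tfrac12$ absorbs the last term and closes the induction, giving $\|\rho^n\|_{L^2(\Omega)}\le c(u)(h^2+\tau)$.

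The main obstacle I expect is the careful bookkeeping of powers of $\tau$ introduced by the weighted norm $|\cdot|$: the operator $(I+\tau^\alpha A_h)^{1/2}$ is \emph{not} uniformly bounded, and its appearance both in the contraction hypothesis \eqref{eqn:Bhtau} and through the factor $(I+\tau^\alpha A_h)$ on the right-hand side of the $\rho^n$-equation means one must repeatedly trade $A_h^{1/2}$ against $\tau^{-\alpha/2}$ while making sure the net exponent of $\tau$ stays nonnegative (or at least does not destroy the $O(h^2+\tau)$ rate). Tracking this so that the final constant is genuinely independent of $\tau$ and $h$, and verifying that the negative power $\tau^{-\alpha}$ coming from the stability estimate is always beaten by the positive powers produced by the second-order extrapolation consistency, is the delicate part; the smoothness $u\in C^2([0,T];H_0^1(\Omega))\cap C^1([0,T];D(A))$ is exactly what supplies the needed $\tau^2$ (minus weight losses) to win this race.
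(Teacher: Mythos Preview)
Your overall structure---deriving a perturbed scheme for the difference, bounding the per-step iteration residual via the contraction hypothesis and the extrapolation, and then closing via a stability estimate---mirrors the paper's. But the closing argument in your Step~3 has a genuine gap that a Gronwall inequality alone cannot fill.

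The problem is a norm mismatch combined with an unabsorbed negative power of $\tau$. Your stability step (via Lemma~\ref{lem:stab01}) controls only $\|\rho^n\|_{L^2(\Omega)}$, whereas the quantity you feed back into it from Step~2 is $\max_{i<j}|\rho^i|$ in the \emph{weighted} norm $|\cdot|=\|(I+\tau^\alpha A_h)^{1/2}\cdot\|_{L^2(\Omega)}$. Since $|\rho^i|\ge \|\rho^i\|_{L^2(\Omega)}$ with no reverse inequality uniform in $\tau,h$, the loop does not close. Concretely, your chain yields
\[
  \|\rho^n\|_{L^2(\Omega)} \le c\,c_0\kappa^m\,\tau^{-\alpha}\Big(\max_{i}|\rho^i| + \tau^{2-\alpha/2} + c(u)\tau^{1-\alpha/2}\Big),
\]
so neither the $\tau^{-\alpha}$ factor nor the $|\rho^i|$ can be absorbed into $\max_j\|\rho^j\|_{L^2(\Omega)}$; and for $\alpha>2/3$ the consistency terms $\tau^{2-3\alpha/2}$, $\tau^{1-3\alpha/2}$ are not even $O(\tau)$. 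The paper avoids this precisely by \emph{not} trying to close in $\|\cdot\|_{L^2(\Omega)}$: it works with $\vartheta^n=U_h^n-R_hu(t_n)$, writes $U_h^{n,m}-U_h^{n,0}=\tau(\bar\partial_\tau\vartheta^n-\bar\partial_\tau\vartheta^{n-1})+\tau^2\bar\partial_\tau^2 R_hu(t_n)$, uses the inverse-type bound $\tau^{1-\alpha}\|(\bar\partial_\tau\vartheta^j)\|_{\ell^q(X_{h,\tau})}\le c\|(\bar\partial_\tau^\alpha\vartheta^j)\|_{\ell^q(X_{h,\tau})}$, and then---crucially---applies maximal $\ell^p$ regularity (Lemma~\ref{lem:max-lp}) to the \emph{preconditioned} variable $(I+\tau^\alpha A_h)^{-1/2}\vartheta^j$ to bound $\|(\bar\partial_\tau^\alpha\vartheta^j)\|_{\ell^q(X_{h,\tau})}$ by $\|(\eta^j)\|_{\ell^q(X_{h,\tau})}+c(u)(h^2+\tau)$. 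This closes the loop entirely in the $\ell^q(X_{h,\tau})$ norm, with no stray $\tau^{-\alpha}$ and no norm mismatch. Your proposal does not invoke Lemma~\ref{lem:max-lp} at all, and without it (or an equivalent device that controls $|\bar\partial_\tau^\alpha\rho^j|$ or $\tau^{\alpha/2}\|A_h^{1/2}\rho^j\|_{L^2(\Omega)}$ directly) the argument cannot be completed.
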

\vspace{-0.4cm}
\begin{proof}
In a customary way, we split the error $e^{n,m}=U_h^{n,m}-u(t_n)$ into
\begin{equation*}
    e^{n,m}= (U_h^{n,m}-R_h u(t_n)) + (R_h u(t_n)-u(t_n)) =: \vartheta^n + \varrho^n.
\end{equation*}
In view of the estimate \eqref{eqn:rho}, it suffices to bound $\vartheta^n$. We break the lengthy and technical
proof into three steps.

\noindent{\bf Step 1}: Bound $\vartheta^n$ by local truncation errors.
Note that $\vartheta^n$ satisfies $\vartheta^0=0$ and for $n=1,\ldots,N$
\begin{align*}
    \bPtau^\alpha \vartheta^n + A_h\vartheta^n
      = & \big(\bPtau^\alpha (U_h^{n,m}-v_h)+ A_hU_h^{n,m}\big)\\
       & -\big(\bPtau^\alpha (R_hu(t_n)-v_h) + A_hR_hu(t_n)\big).
\end{align*}
Let the auxiliary function $\overline{U}_h^n\in X_h$ satisfy $\overline{U}_h^0 = R_hv$ and
\begin{equation*}
  \tau^{-\alpha} \Big(\overline{U}_h^n + \sum_{j=1}^nb_{j}^{(\alpha)} U_h^{n-j,m} -  \sum_{j=0}^n b_{j}^{(\alpha)} U_h^0\Big) + A_h \overline{U}_h^n  = f_h^n,\quad n=1,2,\ldots,N.
\end{equation*}
Therefore, there holds
\begin{align*}
  \bar\partial_\tau^\alpha(U_h^{n,m}-v_h) +A_hU_h^{n,m} =& P_h[\partial_t^\alpha(u(t_n)-v)+Au(t_n)] \\
      &+\tau^{-\alpha}(U_h^{n,m}-\overline{U}_h^n) +A_h(U_h^{n,m}-\overline{U}_h^n).
\end{align*}
This and the identities \eqref{eqn:AR=PA}, \eqref{eqn:fde} and \eqref{eqn:fully2} imply
\begin{equation}\label{eqn:sig1}
    \bPtau^\al \vartheta^n + A_h \vartheta^n= \sigma^n,\quad \mbox{with } \sigma^n = (I+\tau^\alpha A_h) \eta^n  + \omega^n,
\end{equation}
with the errors $\eta^n$ and $\omega^n$ given by
\begin{align*}
  \eta^n &=\tau^{-\alpha} (U_h^{n,m} - \overline{U}_h^n),\\
  \omega^n &= ( P_h - R_h ) \Dal (u(t_n)-v) - R_h (\bPtau^\alpha - \Dal)(u(t_n)-v).
\end{align*}
By Lemma \ref{lem:stab01} and triangle inequality, for any $q\in(\frac2\alpha,\infty)$ and $n=1,2,...,N$,
\begin{align*}
    \|\vartheta^n \|_{L^2(\Omega)}  &\le  c\| (A_h^{-\frac12}\sigma^j)_{j=1}^n\|_{\ell^q(L^2(\Omega))}\\
   &\leq  c\|((I+\tau^{\alpha} A_h)A_h^{-\frac12}\eta^{j})_{j=1}^n\|_{\ell^q(L^2(\Omega))}
  + c\|(A_h^{-\frac12}\omega^j)_{j=1}^n\|_{\ell^q(L^2(\Omega))}.
\end{align*}
Since $u\in C^2([0,T];H_0^1(\Omega))\cap C^1([0,T];D(A))$, \eqref{eqn:approx1} and \eqref{eqn:approx2} imply
\begin{align*}
 \| A_h^{-\frac12}\omega^j\|_{L^2(\Omega)}\leq c\|\omega^j\|_{L^2(\Omega)}\leq c(u)(h^2+\tau).
\end{align*}
Further, since $(I+\tau^\alpha A_h)A_h^{-\frac{1}{2}}=(A_h^{-1}+\tau^\alpha I)^\frac12(I+\tau^\alpha A_h)^\frac{1}{2}$, we have
\begin{equation}\label{eqn:eta}
    \|(I+\tau^\alpha A_h)A_h^{-\frac12}\eta^j\|_{L^2(\Omega)}  \leq  c|\eta^j|.
\end{equation}
The last three estimates imply
\begin{align}\label{est1}
 \| \vartheta^n  \|_{L^2(\Omega)}  \leq  c \|(\eta^{j})_{j=1}^n\|_{\ell^q(X_{h,\tau})}  +  c(u) (h^2 + \tau).
\end{align}

\noindent{\bf Step 2}: Bound the summand $|\eta^j|$. Given a tolerance $\delta>0$ to be determined,
under assumption \eqref{eqn:Bhtau}, there exists an integer $m\in\mathbb{N}$ such that $c_0 \kappa^m \le \delta$
and by triangle inequality,
\begin{equation*}
 |U_h^{n,m} - \overline{U}_h^n| \le \delta |U_h^{n,0}-\overline{U}_h^n| \leq \delta \big( |U_h^{n,0} - U_h^{n,m}| + |U_h^{n,m} - \overline{U}_h^n | \big).
\end{equation*}
With $\epsilon=\delta(1-\delta)^{-1}$, rearranging the inequality gives
\begin{equation*}
|U_h^{n,m} - \overline{U}_h^n|  \le \epsilon |U_h^{n,0} - U_h^{n,m} |.
\end{equation*}
Hence,
\begin{equation*}
 |\eta^n| = \tau^{-\alpha} |U_h^{n,m} - \overline{U}_h^n|
 \le \epsilon \tau^{-\alpha} |U_h^{n,0} - U_h^{n,m} |.
\end{equation*}
Meanwhile, the choice of $U_h^{n,0}$ in \eqref{eqn:Uhn0} implies
\begin{align*}
 U_h^{n,m} - U_h^{n,0} & = U_h^{n,m} - 2U_h^{n-1} + U_h^{n-2}
  = \tau  (\bar \partial_\tau  U_h^{n,m}  - \bar \partial_\tau  U_h^{n-1}) \\
 &=\tau  \bar \partial_\tau  \vartheta^n - \tau  \bar \partial_\tau  \vartheta^{n-1} +  \tau^{2}\bar \partial_\tau^2 R_h u(t_n) .
\end{align*}
The last two estimates together imply
\begin{equation}\label{est2}
\begin{split}
  |\eta^n| &\le c\epsilon\tau^{1-\alpha}(|\bar\partial_\tau\vartheta^n|+|\bar\partial_\tau\vartheta^{n-1}|) + c\epsilon \tau^{2-\alpha} | R_h\bar\partial_\tau^{2} u(t_n) |\\
  &\le c\epsilon \tau^{1-\alpha} (|\bar \partial_\tau\vartheta^n|+|\bar \partial_\tau  \vartheta^{n-1}|) + c\epsilon \tau^{2-\alpha}\|u\|_{C^2([0,T];H_0^1(\Omega))}.
  \end{split}
\end{equation}
This, \eqref{est1} and the standard inverse inequality in time yield
\begin{equation}\label{est3}
\begin{split}
 \| \vartheta^n  \|_{L^2(\Omega)} &\le c\epsilon\tau^{1-\alpha}\|(\bar \partial_\tau\vartheta^j)_{j=1}^n\|_{\ell^q(X_{h,\tau})} +  c(u) (h^2 + \tau)\\
 &\le c\epsilon\|(\bar \partial_\tau^\alpha\vartheta^j)_{j=1}^n\|_{\ell^q(X_{h,\tau})} +  c(u) (h^2 + \tau).
  \end{split}
\end{equation}

\noindent{\bf Step 3}: Bound $\|\vartheta^n\|_{L^2(\Omega)}$ explicitly. Let $I_h=(I+\tau^{\alpha} A_h)^{-\frac12}$.
Then the identity $|\bar \partial_\tau^\alpha \vartheta^j|= \|(I+\tau^{\alpha} A_h)  \bar \partial_\tau^\alpha
I_h\vartheta^j\|_{L^2(\Omega)}$ and the triangle inequality imply
\begin{align*}
 \|(\bar\partial_\tau^\alpha \vartheta^j)_{j=1}^n\|_{\ell^q(X_{h,\tau})}
  & \le  \|( \bar \partial_\tau^\alpha I_h\vartheta^j)_{j=1}^n\|_{\ell^q(L^2(\Omega))}
+   \tau^\alpha \|(\bar \partial_\tau^\alpha A_hI_h\vartheta^j)_{j=1}^n\|_{\ell^q(L^2(\Omega))}\\
  &:={\rm I}+{\rm II} .
\end{align*}
By Lemma \ref{lem:max-lp}, we have
\begin{align*}
  {\rm I}&\leq c\|(I_h\sigma^j)_{j=1}^n\|_{\ell^q(L^2(\Omega))},
\end{align*}
and similarly, the inverse inequality (in time) and Lemma \ref{lem:max-lp} yield
\begin{align*}
 {\rm II} & \leq  c\| (A_hI_h\vartheta^j)_{j=1}^n\|_{\ell^q(L^2(\Omega))} \le c\|(I_h\sigma^j)_{j=1}^n\|_{\ell^q(L^2(\Omega))}.
\end{align*}
Combining the last three estimates with \eqref{eqn:approx1}--\eqref{eqn:sig1} gives
\begin{equation}\label{est4}
\begin{split}
\|(\bar \partial_\tau^\alpha\vartheta^j)_{j=1}^n\|_{\ell^q(X_{h,\tau})}
 &\le c\|(I_h\sigma^j)_{j=1}^n\|_{\ell^q(L^2(\Omega))}\\
 &\le c(u)(\tau+h^2)+c\|(\eta^j)_{j=1}^n\|_{\ell^q(X_{h,\tau})}.
\end{split}
\end{equation}
Now it follows from \eqref{est2} and \eqref{est4} that
\begin{equation*}
 \|(\bar\partial_\tau^\alpha\vartheta^j)_{j=1}^n\|_{\ell^q(X_{h,\tau})} \le c(u) (\tau+h^2) + c\epsilon \|(\bar \partial_\tau^\alpha \vartheta^j)_{j=1}^n\|_{\ell^q(X_{h,\tau})}.
\end{equation*}
Thus by choosing a sufficiently small $\epsilon$, we get
\begin{equation*}
 \|(\bar \partial_\tau^\alpha \vartheta^j)_{j=1}^n\|_{\ell^q(X_{h,\tau})} \le  c(u) (\tau+h^2).
\end{equation*}
This and \eqref{est3} give $\|  \vartheta^n \|_{L^2(\Omega)} \le  c(u) (\tau+h^2)$,
which completes the proof.\qed
\end{proof}

\begin{remark}\label{rmk:smooth-sol}
The regularity requirement $u\in C^1([0,T];D(A)) \cap C^2([0,T];H_0^1(\Omega))$ is restrictive for the subdiffusion
model \eqref{eqn:fde}, due to the well known limited smoothing properties of the corresponding solution operators.
It holds only under certain compatibility conditions on the initial data $v$ and the source term $f$. It holds if
$v=0$, $f(0)=f'(0)=0$ and $f''\in L^\infty(0,T; H^\epsilon(\Omega))$ with a small $\epsilon>0$. The proof uses
crucially the maximal $\ell^p$ regularity estimate, which differs greatly from the argument for the case of nonsmooth
solutions below and also the argument for the standard parabolic equation.
\end{remark}

\section{Error analysis for nonsmooth solutions}\label{sec:nonsmooth}
Now we analyze the case that the solution $u$ is nonsmooth, and derive error estimates nearly optimal
with respect to data regularity. Nonsmooth solutions are characteristic of problem \eqref{eqn:fde}:
with $f=0$ and $A^\beta v\in L^2(\Omega)$, $\beta\in[0,1]$, $u(t)$ satisfies
\cite[Theorem 2.1]{JinLazarovZhou:2018review}
\begin{equation*}
\| \partial_t^k u(t)  \|_{L^2(\Omega)} \le c t^{\beta\alpha-k}\| A^\beta v  \|_{L^2(\Omega)}.
\end{equation*}
Thus, it is important to analyze numerical methods for nonsmooth solutions. To this end, we split the error
$\|U_h^{n,M_n}-u(t_n)\|_{L^2(\Omega)}$ into
$$U_h^{n,M_n} - u(t_n) = (U_h^{n,M_n} - u_h(t_n)) + (u_h(t_n) - u(t_n)),$$
and the spatial error $\|u(t)-u_h(t)\|_{L^2(\Omega)}$ satisfies (with $\ell_h=\ln(1/h+1)$) \cite{JinLazarovZhou:2018review}
\begin{equation*}
   \|(u-u_h)(t)\|_{L^2(\Omega)} \le \left\{\begin{array}{ll}
      ch^2 \| A v\|_{L^2(\Omega)}, &\quad \mbox{if } v_h=R_hv,\\
      ch^2\ell_h t^{-\alpha} \|v\|_{L^2(\Omega)}, & \quad \mbox{if } v_h=P_hv.
   \end{array}\right.
\end{equation*}
Thus, we focus on the temporal error $\|U_h^{n,M_n} - u_h(t_n)\|_{L^2(\Omega)}$.  The analysis below uses
certain \textit{a priori} estimates on the semidiscrete solutions $u_h$ and its fully discrete approximations
$\bar\partial_\tau^\alpha u_h(t_n)$. The proofs follow the standard (discrete) Laplace transform
techniques and thus are deferred to Appendix \ref{app:reg}.

\begin{lemma}\label{lem:est-uh-weight}
Let $u_h$ be the solution to \eqref{eqn:semi} with $f=0$. Then for $\beta\in[0,1]$
\begin{equation*}
  |\bar \partial_\tau^2u_h(t_n)|\leq ct_n^{\beta\alpha-2}\|A_h^\beta v_h\|_{L^2(\Omega)},\quad n> 2.
\end{equation*}
\end{lemma}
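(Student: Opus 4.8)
The plan is to bound the second backward difference $\bar\partial_\tau^2 u_h(t_n)=\tau^{-2}(u_h(t_n)-2u_h(t_{n-1})+u_h(t_{n-2}))$ by sharp pointwise-in-time estimates on $\partial_t^2u_h$, exploiting that for $n>2$ all relevant times exceed $t_{n-2}=(n-2)\tau\ge\tau>0$, where $u_h$ is smooth. Since $u_h$ solves \eqref{eqn:semi} with $f=0$, the Laplace transform in time yields $u_h(t)=\frac{1}{2\pi\mathrm{i}}\int_{\Gamma_{\theta,\delta}}e^{zt}z^{\alpha-1}(z^\alpha+A_h)^{-1}v_h\,\d z$, and hence $\partial_t^2u_h(t)=\frac{1}{2\pi\mathrm{i}}\int_{\Gamma_{\theta,\delta}}e^{zt}z^{\alpha+1}(z^\alpha+A_h)^{-1}v_h\,\d z$ for $t>0$. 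The heart of the argument is the pair of pointwise bounds
\[
\|\partial_t^2u_h(t)\|_{L^2(\Omega)}\le c\,t^{\alpha\beta-2}\|A_h^\beta v_h\|_{L^2(\Omega)},\qquad \|A_h^{\frac12}\partial_t^2u_h(t)\|_{L^2(\Omega)}\le c\,t^{\alpha\beta-\frac\alpha2-2}\|A_h^\beta v_h\|_{L^2(\Omega)},
\]
valid for all $\beta\in[0,1]$ and $t>0$.

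To prove these I would write $v_h=A_h^{-\beta}(A_h^\beta v_h)$ and use the resolvent identity $z^\alpha A_h^{-\beta}(z^\alpha+A_h)^{-1}=A_h^{-\beta}-A_h^{1-\beta}(z^\alpha+A_h)^{-1}$. Since $\frac{1}{2\pi\mathrm{i}}\int_{\Gamma_{\theta,\delta}}e^{zt}z^k\,\d z=0$ for $t>0$ and integer $k\ge0$ (the inverse Laplace transform of a polynomial is supported at the origin), the contribution of $A_h^{-\beta}$ drops out, leaving $\partial_t^2u_h(t)=-\frac{1}{2\pi\mathrm{i}}\int_{\Gamma_{\theta,\delta}}e^{zt}\,z\,A_h^{1-\beta}(z^\alpha+A_h)^{-1}\,\d z\;A_h^\beta v_h$. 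Because $1-\beta\in[0,1]$, the sectorial resolvent estimate gives $\|A_h^{1-\beta}(z^\alpha+A_h)^{-1}\|\le c|z|^{-\alpha\beta}$, and together with the standard contour bound $\int_{\Gamma_{\theta,\delta}}|e^{zt}|\,|z|^s\,|\d z|\le c\,t^{-s-1}$ for $s>-1$ (cf. Lemma~\ref{lem:basic-est1}) this proves the first estimate. For the second estimate I would split on $\beta$: if $\beta\le\tfrac12$ the exponent $\tfrac12-\beta$ is nonnegative and one estimates the integrand $e^{zt}z^{\alpha+1}A_h^{1/2-\beta}(z^\alpha+A_h)^{-1}$ directly via $\|A_h^{1/2-\beta}(z^\alpha+A_h)^{-1}\|\le c|z|^{-\alpha(1/2+\beta)}$; if $\beta>\tfrac12$ one applies the resolvent identity once more to trade a factor $z^\alpha$ for $A_h$, landing on $A_h^{3/2-\beta}(z^\alpha+A_h)^{-1}$ with $\tfrac32-\beta\in[\tfrac12,1]$; either way the integrand is bounded by $c\,|e^{zt}|\,|z|^{1+\alpha/2-\alpha\beta}$ and the contour bound finishes it.

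The remaining steps are routine. Writing the difference as $\bar\partial_\tau^2u_h(t_n)=\tau^{-2}\int_0^\tau\!\!\int_0^\tau\partial_t^2u_h(t_{n-2}+r+s)\,\d r\,\d s$, for $n>2$ the argument is $\ge t_{n-2}\ge\tau$, so by monotonicity of $t\mapsto t^{-\gamma}$ the integrand is dominated by its value at $t_{n-2}$, and integrating over $(0,\tau)^2$ gives $\|\bar\partial_\tau^2u_h(t_n)\|_{L^2(\Omega)}\le c\,t_{n-2}^{\alpha\beta-2}\|A_h^\beta v_h\|_{L^2(\Omega)}$ and $\|A_h^{1/2}\bar\partial_\tau^2u_h(t_n)\|_{L^2(\Omega)}\le c\,t_{n-2}^{\alpha\beta-\alpha/2-2}\|A_h^\beta v_h\|_{L^2(\Omega)}$. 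By \eqref{eqn:norm-e} we have $|\psi|^2=\|\psi\|_{L^2(\Omega)}^2+\tau^\alpha\|A_h^{1/2}\psi\|_{L^2(\Omega)}^2$, hence $|\psi|\le\|\psi\|_{L^2(\Omega)}+\tau^{\alpha/2}\|A_h^{1/2}\psi\|_{L^2(\Omega)}$; applying this with $\psi=\bar\partial_\tau^2u_h(t_n)$ yields $|\bar\partial_\tau^2u_h(t_n)|\le c\,t_{n-2}^{\alpha\beta-2}(1+(\tau/t_{n-2})^{\alpha/2})\|A_h^\beta v_h\|_{L^2(\Omega)}\le c\,t_{n-2}^{\alpha\beta-2}\|A_h^\beta v_h\|_{L^2(\Omega)}$, using $\tau\le t_{n-2}$. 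Finally $t_{n-2}=(n-2)\tau\ge\tfrac13 n\tau=\tfrac13 t_n$ for $n\ge3$, so $t_{n-2}^{\alpha\beta-2}\le 3^{2-\alpha\beta}t_n^{\alpha\beta-2}\le c\,t_n^{\alpha\beta-2}$, which is the assertion.

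I expect the main obstacle to be the sharp bound on $\|A_h^{1/2}\partial_t^2u_h(t)\|_{L^2(\Omega)}$ when $\beta>\tfrac12$: the crude estimate $\|A_h^{1/2-\beta}(z^\alpha+A_h)^{-1}\|\le\|A_h^{1/2-\beta}\|\cdot c|z|^{-\alpha}$ only produces the exponent $-2$ in $t$, which is too weak to absorb the weight $\tau^{\alpha/2}$ uniformly in $\tau$ at the starting time levels; the remedy is precisely to spend one power $z^\alpha$ through the resolvent identity so that the surviving power of $A_h$ lies in $[0,1]$, where the sectorial estimate applies. The Laplace representation, the scalar contour estimates, and the index bookkeeping are otherwise standard.
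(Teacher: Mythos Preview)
Your argument is correct and takes a genuinely different route from the paper's. The paper represents $\bar\partial_\tau^2 u_h(t_n)$ directly via the discrete Laplace transform, obtaining
\[
\bar\partial_\tau^2 u_h(t_n)=\frac{1}{2\pi\mathrm{i}}\int_{\Gamma_{\theta,\delta}}\delta_\tau(e^{-z\tau})^2 e^{zt_n}z^{\alpha-1}(z^\alpha+A_h)^{-1}v_h\,\d z,
\]
then splits the contour into $\Gamma_{\theta,\delta}^\tau$ and its complement and estimates the symbol $\delta_\tau(e^{-z\tau})$ on each piece using \eqref{eqn:gen} and \eqref{eqn:est-kernel}. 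It treats the endpoints $\beta=0$ and $\beta=1$ separately (using the identity $z^{\alpha-1}(z^\alpha+A_h)^{-1}=z^{-1}-z^{-1}A_h(z^\alpha+A_h)^{-1}$ for the latter), and bounds $\|\bar\partial_\tau^2 u_h(t_n)\|_{L^2(\Omega)}$ together with $\tau^\alpha\|A_h\bar\partial_\tau^2 u_h(t_n)\|_{L^2(\Omega)}$ to control the weighted norm. By contrast, you first prove sharp pointwise bounds on $\partial_t^2 u_h(t)$ and $A_h^{1/2}\partial_t^2 u_h(t)$ for the \emph{continuous} semidiscrete solution, then transfer them to $\bar\partial_\tau^2 u_h(t_n)$ via the double-integral formula $\bar\partial_\tau^2 u_h(t_n)=\tau^{-2}\iint_{(0,\tau)^2}\partial_t^2 u_h(t_{n-2}+r+s)\,\d r\,\d s$. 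Your approach avoids the contour splitting and the delicate estimate \eqref{eqn:est-kernel} on $\Gamma_{\theta,\delta}\setminus\Gamma_{\theta,\delta}^\tau$, and handles all $\beta\in[0,1]$ in one stroke (modulo the $\beta\lessgtr\tfrac12$ split for the $A_h^{1/2}$ bound). The paper's approach, on the other hand, works uniformly for discrete operators where a pointwise $\partial_t^2$ may not be available, and its use of $\tau^\alpha\|A_h\cdot\|$ rather than $\tau^{\alpha/2}\|A_h^{1/2}\cdot\|$ avoids your case distinction.

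One minor point: your citation of Lemma~\ref{lem:basic-est1} for the contour bound $\int_{\Gamma_{\theta,\delta}}|e^{zt}|\,|z|^s\,|\d z|\le c\,t^{-s-1}$ is misplaced; that lemma concerns discrete convolution sums. The contour estimate you need is the standard one obtained by choosing $\delta=c/t$ and parameterizing $\Gamma_{\theta,\delta}$, as in the proof of Lemma~\ref{lem:stab-sol-op}.
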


\begin{lemma}\label{lem:tal2}
Let $u_h(t)$ be the solution to \eqref{eqn:semi} with $f=0$ and $y_h(t) = u_h(t) - v_h$. Then for any $\beta\in[0,1]$, the following statements hold.
\begin{itemize}
\item[$\rm(i)$] If $Av\in L^2(\Omega)$ and $v_h=R_hv$, then
\begin{equation*}
   \| A_h^{\beta}\big(\partial_t^{\alpha } y_h(t_n) - \bar \partial_\tau^{\alpha} y_h(t_n)\big)\|_{L^2(\Omega)} \le c \tau  t_n^{-1-\beta\alpha} \|Av\|_{L^2(\Omega)}.
\end{equation*}
\item[$\rm(ii)$] If $v\in L^2(\Omega)$ and $v_h=P_hv$, then
\begin{equation*}
\| A_h^{-\beta}\big(\partial_t^{\alpha } y_h(t_n) - \bar \partial_\tau^{\alpha} y_h(t_n)\big)\|_{L^2(\Omega)} \le c \tau  t_n^{-1-(1-\beta)\alpha} \|v\|_{L^2(\Omega)}.
\end{equation*}
\end{itemize}
\end{lemma}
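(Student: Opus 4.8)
\noindent\emph{Proof plan.}
The plan is to establish both estimates from the (discrete) Laplace transform representation of $y_h$ and a symbol comparison; parts (i) and (ii) are handled by one and the same argument, the only difference being where the powers of $A_h$ are placed. Since $f\equiv0$, taking Laplace transform in \eqref{eqn:semi} gives $\widehat{u_h}(z)=z^{\alpha-1}(z^\alpha+A_h)^{-1}v_h$, so that, using $y_h(0)=0$,
\[
  \widehat{y_h}(z)=-z^{-1}(z^\alpha+A_h)^{-1}A_hv_h,\qquad
  \widehat{\partial_t^\alpha y_h}(z)=z^\alpha\widehat{y_h}(z)=-z^{\alpha-1}(z^\alpha+A_h)^{-1}A_hv_h .
\]
Inverting the second identity along $\Gamma_{\theta,\delta}$ with $\delta=1/t_n$ yields a contour representation of $\partial_t^\alpha y_h(t_n)$. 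For the quadrature term I would use the generating-function identity $\sum_{n\geq1}\bar\partial_\tau^\alpha y_h(t_n)\xi^n=\delta_\tau(\xi)^\alpha\sum_{n\geq1}y_h(t_n)\xi^n$ together with Cauchy's formula and the change of variable $\xi=e^{-z\tau}$ --- exactly the manipulation behind the operators $F_{h,\tau}^n,E_{h,\tau}^n$ in \eqref{eqn:sol-fully} --- to write $\bar\partial_\tau^\alpha y_h(t_n)$ as the same integral, but over the truncated contour $\Gamma_{\theta,\delta}^\tau$ and with $z^{\alpha-1}$ and $z^\alpha$ replaced by $\delta_\tau(e^{-z\tau})^{\alpha-1}$ and $\delta_\tau(e^{-z\tau})^\alpha$, plus a tail over $|\Im z|=\pi/\tau$.

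Subtracting the two representations, $\partial_t^\alpha y_h(t_n)-\bar\partial_\tau^\alpha y_h(t_n)$ becomes a single contour integral of the difference of symbols, which I would estimate using three elementary ingredients valid where $|z|\tau$ is bounded: (a) the standard bounds $c_1|z|\le|\delta_\tau(e^{-z\tau})|\le c_2|z|$ and the fact that $\delta_\tau(e^{-z\tau})$ stays in a sector $|\arg\,\cdot\,|\le\theta'<\pi$, so that, $A_h$ being self-adjoint and positive definite, $\|A_h^\gamma(w+A_h)^{-1}\|\le c|w|^{\gamma-1}$ for $\gamma\in[0,1]$ and $w\in\{z^\alpha,\delta_\tau(e^{-z\tau})^\alpha\}$; (b) the consistency estimates $|\delta_\tau(e^{-z\tau})^{\alpha-1}-z^{\alpha-1}|\le c\tau|z|^\alpha$ and $|\delta_\tau(e^{-z\tau})^\alpha-z^\alpha|\le c\tau|z|^{\alpha+1}$ from a Taylor expansion of $\delta_\tau$ (cf.\ Lemma \ref{lem:bdd-b}); and (c) the resolvent identity $(w_1+A_h)^{-1}-(w_2+A_h)^{-1}=(w_2-w_1)(w_1+A_h)^{-1}(w_2+A_h)^{-1}$. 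For part (i) one uses $A_hv_h=A_hR_hv=P_hAv$ from \eqref{eqn:AR=PA}, so $\|A_hv_h\|_{L^2(\Omega)}\le\|Av\|_{L^2(\Omega)}$; combined with (a)--(c), the integrand of $A_h^\beta(\partial_t^\alpha y_h-\bar\partial_\tau^\alpha y_h)$ at $t_n$ is bounded by $c\tau\,e^{\Re z\,t_n}|z|^{\beta\alpha}\|Av\|_{L^2(\Omega)}$, and integrating over $\Gamma_{\theta,\delta}$ with $\delta=1/t_n$ gives $c\tau\,t_n^{-1-\beta\alpha}\|Av\|_{L^2(\Omega)}$. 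For part (ii) one keeps $v_h=P_hv$ with $\|v_h\|_{L^2(\Omega)}\le\|v\|_{L^2(\Omega)}$, rewrites the integrand of $A_h^{-\beta}\partial_t^\alpha y_h$ as $-z^{\alpha-1}A_h^{1-\beta}(z^\alpha+A_h)^{-1}v_h$ and uses $\|A_h^{1-\beta}(z^\alpha+A_h)^{-1}\|\le c|z|^{-\beta\alpha}$; the same gain $c\tau|z|$ from (b)--(c) and the same contour integration then produce $c\tau\,t_n^{-1-(1-\beta)\alpha}\|v\|_{L^2(\Omega)}$.

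The main obstacle is the bookkeeping around the quadrature representation and its truncation rather than any single estimate. One must verify that $\delta_\tau(e^{-z\tau})$ indeed remains in a sector on which $(\,\cdot\,+A_h)^{-1}$ is analytic and obeys the bounds in (a) uniformly for every $z\in\Gamma_{\theta,\delta}^\tau$ --- which constrains the admissible opening angle $\theta$ and the range of $|z|\tau$ --- and that truncating both the continuous integral along $\Gamma_{\theta,\delta}$ beyond $|\Im z|=\pi/\tau$ and the high-frequency tail of the discrete representation produces only an error of size $O(e^{-cn})$, which is dominated by $\tau\,t_n^{-1-\beta\alpha}$ since $t_n=n\tau$ and $\beta\alpha<1$. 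These routine-but-delicate points are exactly what the paper relegates to Appendix \ref{app:reg}; once they are in place, the claimed bounds follow from the Taylor expansion of $\delta_\tau$ and the elementary resolvent estimates above.
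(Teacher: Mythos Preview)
Your overall strategy---contour representations, the consistency estimate $|\delta_\tau(e^{-z\tau})^\alpha-z^\alpha|\le c\tau|z|^{1+\alpha}$, and sectorial resolvent bounds---is precisely what the paper does in Appendix~\ref{app:reg}. There is, however, a slip in your representation of $\bar\partial_\tau^\alpha y_h(t_n)$ that matters. Since $y_h$ is the \emph{semidiscrete} solution sampled at the grid points, its Laplace symbol is $-z^{-1}(z^\alpha+A_h)^{-1}A_hv_h$ with the \emph{continuous} resolvent; applying the convolution quadrature $\bar\partial_\tau^\alpha$ to these samples only multiplies the symbol by $\delta_\tau(e^{-z\tau})^\alpha$. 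The manipulation behind $F_{h,\tau}^n,E_{h,\tau}^n$ replaces the resolvent by $(\delta_\tau(e^{-z\tau})^\alpha+A_h)^{-1}$ because there one is summing a \emph{discrete}-in-time evolution; carried out here it would produce $\bar\partial_\tau^\alpha(U_h^n-v_h)$, not $\bar\partial_\tau^\alpha y_h(t_n)$, and the lemma would not follow.

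With the correct representation the paper obtains directly
\[
  \partial_t^\alpha y_h(t_n)-\bar\partial_\tau^\alpha y_h(t_n)
  =\frac{1}{2\pi\mathrm{i}}\int_{\Gamma_{\theta,\delta}}e^{zt_n}\,
  \big(\delta_\tau(e^{-z\tau})^\alpha-z^\alpha\big)\,z^{-1}(z^\alpha+A_h)^{-1}A_hv_h\,\d z,
\]
split into $\Gamma_{\theta,\delta}^\tau$ and $\Gamma_{\theta,\delta}\setminus\Gamma_{\theta,\delta}^\tau$. Your ingredient (c), the resolvent identity, is therefore unnecessary: only (a) and (b) are used, the endpoint cases $\beta=0,1$ are proved and the rest follows by interpolation. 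The tail over $\Gamma_{\theta,\delta}\setminus\Gamma_{\theta,\delta}^\tau$ is controlled not by an $O(e^{-cn})$ argument but by the crude bound $|\delta_\tau(e^{-z\tau})|\le|z|e^{|z|\tau}$ together with a choice of $\theta$ close enough to $\pi$ that $e^{t_n\Re z}$ absorbs the exponential growth. Once you make this correction, your argument and the paper's coincide.
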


\begin{corollary}\label{cor:tal3}
Let $u_h(t)$ be the solution to \eqref{eqn:semi} with $f\equiv0$ and $y_h(t) = u_h(t) - v_h$. If $v\in L^2(\Omega)$ and $v_h=P_hv$, then for any $\beta\in[0,1]$,
\begin{equation*}
\| A_h^{-\beta} \bar\partial_\tau\big(\partial_t^\alpha  y_h(t_n) - \bar \partial_\tau^\alpha  y_h(t_n)\big)\|_{L^2(\Omega)}
\le c \tau  t_n^{-2-(1-\beta)\alpha} \|v\|_{L^2(\Omega)}.
\end{equation*}
\end{corollary}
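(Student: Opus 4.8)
The plan is to reopen the proof of Lemma \ref{lem:tal2}(ii) rather than invoke it as a black box, because the required improvement lives \emph{inside} that argument. Writing $w_h^n:=\partial_t^\alpha y_h(t_n)-\bar\partial_\tau^\alpha y_h(t_n)$, the corollary asks for the bound of Lemma \ref{lem:tal2}(ii) with one extra power of $t_n^{-1}$; however, the only estimate available directly from the conclusion of that lemma, namely $\|A_h^{-\beta}\bar\partial_\tau w_h^n\|_{L^2(\Omega)}\le\tau^{-1}\big(\|A_h^{-\beta}w_h^n\|_{L^2(\Omega)}+\|A_h^{-\beta}w_h^{n-1}\|_{L^2(\Omega)}\big)\le c\,t_n^{-1-(1-\beta)\alpha}\|v\|_{L^2(\Omega)}$, is too crude by the factor $t_n/\tau\ge1$. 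Instead, I would start from the discrete inverse Laplace transform representation used to prove Lemma \ref{lem:tal2}(ii) in Appendix \ref{app:reg}. Since $y_h(0)=0$, $u_h$ solves \eqref{eqn:semi} with $f\equiv0$, and $v_h=P_hv$, one has $\widehat{y_h}(z)=-z^{-1}A_h(z^\alpha+A_h)^{-1}P_hv$ and $\partial_t^\alpha y_h={}^R\partial_t^\alpha y_h$, so that $w_h^n=\frac{1}{2\pi\mathrm{i}}\int_{\Gamma_{\theta,\delta}^\tau}e^{zt_n}K_\tau(z)P_hv\,\d z$ up to an $O(e^{-ct_n/\tau})$ truncation term, with $K_\tau(z)=-\big(z^\alpha-\delta_\tau(e^{-z\tau})^\alpha\big)z^{-1}A_h(z^\alpha+A_h)^{-1}$; here the estimate behind Lemma \ref{lem:tal2}(ii) is, by the first-order accuracy of the backward Euler CQ symbol ($|z^\alpha-\delta_\tau(e^{-z\tau})^\alpha|\le c\tau|z|^{\alpha+1}$ on $\Gamma_{\theta,\delta}^\tau$) and $\|A_h^{1-\beta}(z^\alpha+A_h)^{-1}\|\le c|z|^{-\alpha\beta}$, the bound $\|A_h^{-\beta}K_\tau(z)\|\le c\,\tau|z|^{(1-\beta)\alpha}$.

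The computation then runs as in the proof of Lemma \ref{lem:tal2}(ii), with one extra power of $|z|$. Since $\bar\partial_\tau(e^{zt_n})=\delta_\tau(e^{-z\tau})e^{zt_n}$, applying the backward difference to the representation above merely inserts the factor $\delta_\tau(e^{-z\tau})$ into the integrand (the backward difference of the $O(e^{-ct_n/\tau})$ remainder is still negligible). Using the standard bound $|\delta_\tau(e^{-z\tau})|\le c|z|$ on the truncated contour $\Gamma_{\theta,\delta}^\tau$, the integrand of $\bar\partial_\tau w_h^n$ is bounded by $c\,\tau\,|z|^{1+(1-\beta)\alpha}e^{\Re(z)t_n}\|v\|_{L^2(\Omega)}$; choosing $\delta=t_n^{-1}$ in \eqref{eqn:Gamma} and using the elementary contour bound $\int_{\Gamma_{\theta,\delta}^\tau}|z|^{1+(1-\beta)\alpha}e^{\Re(z)t_n}\,|\d z|\le c\,t_n^{-2-(1-\beta)\alpha}$ (after the scaling $z\mapsto z/t_n$, cf.\ Lemma \ref{lem:basic-est1}) yields the claimed estimate. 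The extra $|z|$ relative to Lemma \ref{lem:tal2}(ii) is exactly what lowers the exponent of $t_n$ from $-1-(1-\beta)\alpha$ to $-2-(1-\beta)\alpha$.

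The main obstacle is bookkeeping rather than a new idea: one must confirm that the representation of $w_h^n$ employed here is precisely the one established in Appendix \ref{app:reg} (including that the $\Gamma_{\theta,\delta}\to\Gamma_{\theta,\delta}^\tau$ truncation, and its backward difference, are $O(e^{-ct_n/\tau})$ and absorbable), and that the symbol bound $|\delta_\tau(e^{-z\tau})|\le c|z|$ holds uniformly over the \emph{entire} truncated contour, including the circular arc $|z|=\delta$ and the parts of the rays where $|z|\tau$ is of order one. A minor point is the index range: in the contour representation $w_h^0=0$, so the integral formula for $\bar\partial_\tau w_h^n$ is valid for all $n\ge1$; if one instead reads $\bar\partial_\tau w_h^1$ pointwise as $\tau^{-1}(w_h^1-w_h^0)$ with $w_h^0=\lim_{t\to0^+}\partial_t^\alpha y_h(t)=-A_hP_hv$, then the statement should be restricted to $n\ge2$, consistent with Lemma \ref{lem:est-uh-weight}.
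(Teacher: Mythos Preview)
Your proposal is correct and follows essentially the same approach the paper indicates: the paper states that the proof of Corollary~\ref{cor:tal3} is identical with that of Lemma~\ref{lem:tal2}, and you correctly reopen the Laplace-transform representation, insert the factor $\delta_\tau(e^{-z\tau})$ arising from $\bar\partial_\tau(e^{zt_n})=\delta_\tau(e^{-z\tau})e^{zt_n}$, and use $|\delta_\tau(e^{-z\tau})|\le c|z|$ on $\Gamma_{\theta,\delta}^\tau$ to gain the extra power $t_n^{-1}$. Your treatment of the tail on $\Gamma_{\theta,\delta}\setminus\Gamma_{\theta,\delta}^\tau$ as $O(e^{-ct_n/\tau})$ is slightly less explicit than the paper's direct bound via \eqref{eqn:est-kernel}, but amounts to the same thing.
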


Below we analyze the homogeneous problem with the smooth and nonsmooth initial
data separately, since the requisite estimates differ substantially. The main
results of this section, i.e., error estimates for the incomplete iterative scheme \eqref{eqn:fully2}
are given in Theorems \ref{thm:err-smooth-ini} and \ref{thm:err-nonsmooth-ini}.
\subsection{Smooth initial data}
First, we analyze the case of smooth initial data, i.e., $Av\in L^2(\Omega)$. We begin with a simple weighted
estimate of inverse inequality type. The shorthand LHS denotes the left hand side.
\begin{lemma}\label{lem:inverse}
For any $\varphi^{j}\in X_h$ {\rm(}with $\varphi^0=0${\rm)}, and $\gamma\in(0,1)$, there holds
\begin{equation*}%\label{eqn:weight-est}
\tau  \sum_{j=1}^n (t_{n+1}-t_j)^{\frac\alpha2-1}  \|  \bar \partial_\tau^\gamma \varphi^j \|_{L^2(\Omega)}
\le c\tau^{1-\gamma}   \sum_{j=1}^n (t_{n+1}-t_j)^{\frac\alpha2-1}\| \varphi^j\|_{L^2(\Omega)}.
\end{equation*}
\end{lemma}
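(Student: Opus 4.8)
The plan is to expand the fractional difference operator as a discrete convolution and reduce the claim to a pointwise (in the summation index) estimate on a convolution of the two weight sequences. First, recall $\bar\partial_\tau^\gamma\varphi^j=\tau^{-\gamma}\sum_{k=0}^{j}b_k^{(\gamma)}\varphi^{j-k}$; since $\varphi^0=0$, the term $k=j$ vanishes, so $\|\bar\partial_\tau^\gamma\varphi^j\|_{L^2(\Omega)}\le\tau^{-\gamma}\sum_{k=0}^{j-1}|b_k^{(\gamma)}|\,\|\varphi^{j-k}\|_{L^2(\Omega)}$. Substituting this into the left-hand side (LHS), reindexing with $i=j-k$, and interchanging the order of the two finite sums gives
\[
\mathrm{LHS}\le \tau^{1-\gamma}\sum_{i=1}^n\|\varphi^i\|_{L^2(\Omega)}\sum_{j=i}^n(t_{n+1}-t_j)^{\frac\alpha2-1}|b_{j-i}^{(\gamma)}|.
\]
Hence it suffices to prove that, with a constant $c$ independent of $i,n,\tau$,
\[
\sum_{j=i}^n(t_{n+1}-t_j)^{\frac\alpha2-1}|b_{j-i}^{(\gamma)}|\le c\,(t_{n+1}-t_i)^{\frac\alpha2-1},\qquad 1\le i\le n.
\]

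To establish this inner bound, write $N_i=n+1-i\ge1$ so that $t_{n+1}-t_i=N_i\tau$, and set $\ell=j-i$, so the sum becomes $\tau^{\frac\alpha2-1}\sum_{\ell=0}^{N_i-1}(N_i-\ell)^{\frac\alpha2-1}|b_\ell^{(\gamma)}|$. The term $\ell=0$ equals $\tau^{\frac\alpha2-1}N_i^{\frac\alpha2-1}$, which already matches the target. For $\ell\ge1$ I would use $|b_\ell^{(\gamma)}|\le c\ell^{-1-\gamma}$ (Lemma \ref{lem:bdd-b}) and split the remaining sum at $\ell=N_i/2$: on $\{1\le\ell\le N_i/2\}$, bound $(N_i-\ell)^{\frac\alpha2-1}\le c N_i^{\frac\alpha2-1}$ (the exponent is negative) and use $\sum_{\ell\ge1}\ell^{-1-\gamma}<\infty$, which holds precisely because $\gamma>0$; on $\{N_i/2<\ell\le N_i-1\}$, bound $\ell^{-1-\gamma}\le cN_i^{-1-\gamma}$ and, after setting $p=N_i-\ell$, use $\sum_{p=1}^{N_i}p^{\frac\alpha2-1}\le cN_i^{\alpha/2}$ (valid since $\frac\alpha2-1\in(-1,-\tfrac12)$ for $\alpha\in(0,1)$, cf. Lemma \ref{lem:basic-est1}), giving $cN_i^{\frac\alpha2-1-\gamma}\le cN_i^{\frac\alpha2-1}$. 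Adding the two contributions yields the inner bound, and the lemma follows.

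The routine parts are the reindexing of the double sum and the two comparison-with-integral estimates for the power sums (collected in Appendix A). The only point requiring care is the ``far'' region, $\ell$ close to $N_i$, where the singular weight $(t_{n+1}-t_j)^{\frac\alpha2-1}$ is largest and the full convergent series $\sum\ell^{-1-\gamma}$ is not enough by itself; there the decay $|b_\ell^{(\gamma)}|\lesssim\ell^{-1-\gamma}$ combined with the sublinear growth $\sum_{p=1}^M p^{\frac\alpha2-1}\lesssim M^{\alpha/2}$ just compensates, and it is exactly the restriction $\gamma\in(0,1)$ (rather than $\gamma=0$) that keeps both split sums finite.
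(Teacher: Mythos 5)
Your proof is correct and follows essentially the same route as the paper: expand $\bar\partial_\tau^\gamma$ as a discrete convolution, bound the weights via Lemma \ref{lem:bdd-b}, interchange the order of summation, and estimate the resulting inner convolution sum. The only difference is that you prove the inner bound by hand with a split at $\ell=N_i/2$, which is exactly the argument used to prove Lemma \ref{lem:basic-est1} (the case $0\le\beta<1$, $\gamma+1>1$), the estimate the paper simply cites at that point.
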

\begin{proof}
Since $\varphi^0=0$, Lemma \ref{lem:bdd-b} and changing the summation order yield
\begin{align*}
  {\rm LHS} &\leq \tau^{1-\gamma} \sum_{j=1}^n (t_{n+1}-t_j)^{\frac\alpha2-1} \sum_{\ell=0}^j |b_{j-\ell}^{(\gamma)}| \| \varphi^\ell\|_{L^2(\Omega)}\\
 & \le c\tau^{\frac\alpha2-\gamma}\sum_{\ell=1}^n \| \varphi^\ell\|_{L^2(\Omega)} \sum_{i=0}^{n-\ell} ( {n-\ell}+1- i)^{\frac\alpha2-1} (i+1)^{-\gamma-1}.
\end{align*}
The desired assertion follows directly from Lemma \ref{lem:basic-est1}.\qed
\end{proof}

The next result gives a weighted estimate on the time stepping scheme \eqref{eqn:fully}.
\begin{lemma}\label{lem:disc-stab-2}
Let $e^n\in X_h$ satisfy $e^0=0$ and
\begin{equation*}
   \bar\partial_\tau^\alpha e^n + A_h e^n = \sigma^n,\quad n=1,\ldots,N.
\end{equation*}
Then with $\ell_n=\ln(1+t_n/\tau)$, there holds
\begin{equation*}
\tau \sum_{j=1}^n (t_{n+1}-t_j)^{\frac\alpha2-1}  |\bar \partial_\tau^\alpha e^j|
\le c\tau \ell_n  \sum_{j=1}^n (t_{n+1}-t_j)^{\frac\alpha2-1}\|(I+\tau^\alpha A_h)^{-\frac12}\sigma^j \|_{L^2(\Omega)}.
\end{equation*}
\end{lemma}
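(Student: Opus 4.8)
The plan is to represent the solution $e^n$ via the discrete solution operators $E_{h,\tau}^{n-j}$ acting on the right-hand side $\sigma^j$, and then to estimate the weighted sum of $|\bar\partial_\tau^\alpha e^j|$ by bringing the weighted energy norm inside. Concretely, since $e^0=0$, we have $e^n=\tau\sum_{j=1}^n E_{h,\tau}^{n-j}\sigma^j$, so $\bar\partial_\tau^\alpha e^n=\tau\sum_{j=1}^n \bar\partial_\tau^\alpha E_{h,\tau}^{n-j}\sigma^j$ (with the convention that $\bar\partial_\tau^\alpha$ acts in the discrete time index $n$ on the zero-extended sequence). The key point is to control $|\bar\partial_\tau^\alpha E_{h,\tau}^k\sigma|$ in the weighted norm $|\cdot|$. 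Writing $|\psi|=\|(I+\tau^\alpha A_h)^{1/2}\psi\|_{L^2(\Omega)}$ and inserting $(I+\tau^\alpha A_h)^{-1/2}$ next to $\sigma$, we reduce matters to a bound of the form
\begin{equation*}
 \big\|(I+\tau^\alpha A_h)^{1/2}\,\bar\partial_\tau^\alpha E_{h,\tau}^k\,(I+\tau^\alpha A_h)^{1/2}\big\| \le c\,\Phi_k,
\end{equation*}
for a suitable kernel $\Phi_k$, after which the weighted sum $\tau\sum_j (t_{n+1}-t_j)^{\alpha/2-1}|\bar\partial_\tau^\alpha e^j|$ becomes a double sum that can be handled by a discrete Young/convolution inequality together with the basic kernel estimates in Appendix A (Lemma \ref{lem:basic-est1}).

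The main technical work is the operator bound. Using the contour representation of $E_{h,\tau}^n$ over $\Gamma_{\theta,\delta}^\tau$, $\bar\partial_\tau^\alpha E_{h,\tau}^k$ corresponds to multiplying the integrand by $\delta_\tau(e^{-z\tau})^\alpha$, giving an integrand of the shape $e^{zk\tau}\,\delta_\tau(e^{-z\tau})^\alpha\,(\delta_\tau(e^{-z\tau})^\alpha+A_h)^{-1}$. Conjugating by $(I+\tau^\alpha A_h)^{1/2}$ on both sides, one uses the spectral calculus of $A_h$: for $\lambda\ge 0$ an eigenvalue, the symbol is $(1+\tau^\alpha\lambda)\,\mu\,(\mu+\lambda)^{-1}$ with $\mu=\delta_\tau(e^{-z\tau})^\alpha$. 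Since on $\Gamma_{\theta,\delta}^\tau$ one has $|\mu|\sim|z|^\alpha$ and $|\mu+\lambda|\ge c(|\mu|+\lambda)$ by a sectoriality argument (standard for CQ, cf. \cite{LubichSloanThomee:1996,JinLiZhou:2017}), the factor $(1+\tau^\alpha\lambda)|\mu|/|\mu+\lambda|$ is bounded by $c(1+\tau^\alpha|z|^\alpha)$ when $|z|\le \pi/(\tau\sin\theta)$ or so, i.e. by $c$ on the bounded part $|z|\le 1/\tau$ and by $c\,\tau^\alpha|z|^\alpha$ on the part $1/\tau\le|z|\le c/\tau$. Splitting the contour accordingly and integrating $|e^{zk\tau}|$ against these bounds — with the circular arc contributing the log — yields $\Phi_k\le c\,t_{k+1}^{-1}$ for $k\ge 1$ plus a term $c\,\tau^{-1}\ell$-type contribution from $k=0$; the borderline $k=0$ term is exactly where the factor $\ell_n=\ln(1+t_n/\tau)$ enters after summation. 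This is the step I expect to be the main obstacle: tracking the constants in the sectorial resolvent estimate in the presence of the $(I+\tau^\alpha A_h)^{1/2}$ weights, and correctly isolating the logarithmically divergent piece near $k=0$ so that it does not destroy the convolution bound.

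With the kernel bound in hand, I would finish by substituting into the weighted sum and estimating
\begin{equation*}
 \tau\sum_{j=1}^n (t_{n+1}-t_j)^{\frac\alpha2-1}\,\tau\sum_{i=1}^{j}\Phi_{j-i}\,\|(I+\tau^\alpha A_h)^{-\frac12}\sigma^i\|_{L^2(\Omega)}
\end{equation*}
by switching the order of summation: the inner sum $\tau\sum_{j=i}^n (t_{n+1}-t_j)^{\alpha/2-1}\Phi_{j-i}$, with $\Phi_k\lesssim t_{k+1}^{-1}$ for $k\ge1$ and the extra $k=0$ contribution, is bounded by $c\,\ell_n (t_{n+1}-t_i)^{\alpha/2-1}$ using the convolution estimate for kernels of type $t^{\alpha/2-1}\ast t^{-1}$ from Lemma \ref{lem:basic-est1} (the $t^{-1}$ singularity is integrable only in the discrete sense and produces the log, which is where $\ell_n$ comes from — consistent with the statement). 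This reproduces exactly the claimed right-hand side $c\tau\ell_n\sum_{j=1}^n (t_{n+1}-t_j)^{\alpha/2-1}\|(I+\tau^\alpha A_h)^{-1/2}\sigma^j\|_{L^2(\Omega)}$, completing the proof.
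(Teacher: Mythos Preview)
Your approach is correct and genuinely different from the paper's. You go after a direct kernel bound $\|(I+\tau^\alpha A_h)\,\bar\partial_\tau^\alpha E_{h,\tau}^k\|\le c\,t_{k+1}^{-1}$ via the contour representation, using that on $\Gamma_{\theta,\delta}^\tau$ one has $(1+\tau^\alpha\lambda)|\mu|/|\mu+\lambda|\le c(1+\tau^\alpha|z|^\alpha)\le c$; then the weighted double sum reduces to the convolution $\sum_j(n+1-j)^{\alpha/2-1}(j-i+1)^{-1}\le c\,\ell_n(n+1-i)^{\alpha/2-1}$ from Lemma~\ref{lem:basic-est1}. The paper instead avoids any new contour estimate: it writes $|\bar\partial_\tau^\alpha e^j|\le\|I_h\bar\partial_\tau^\alpha e^j\|+\tau^\alpha\|I_hA_h\bar\partial_\tau^\alpha e^j\|$ with $I_h=(I+\tau^\alpha A_h)^{-1/2}$, substitutes $\bar\partial_\tau^\alpha e^j=\sigma^j-A_he^j$ in the first piece, kills the second piece with the inverse inequality of Lemma~\ref{lem:inverse}, and then only needs the already-available bound $\|A_hE_{h,\tau}^k\|\le ct_{k+1}^{-1}$ from Lemma~\ref{lem:stab-sol-op}. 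Your route is self-contained and conceptually clean (a single operator bound plus a convolution), while the paper's is more elementary in that it recycles existing smoothing estimates and the equation itself. One small correction: the $\ell_n$ does \emph{not} come from the $k=0$ term or the circular arc of the contour --- your $\Phi_0$ is simply $O(\tau^{-1})$ with no log --- but from the discrete convolution of $t^{\alpha/2-1}$ against the borderline-nonintegrable kernel $t_{k+1}^{-1}$, exactly as you say in your last paragraph.
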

\vspace{-0.2cm}
\begin{proof}
Let $I_h=(I+\tau^\alpha A_h)^{-\frac12}$. By the identity $ \bar\partial_\tau^\alpha e^j =\sigma^j- A_h e^j $, we have
\begin{align*}
| \bar \partial_\tau^\alpha e^j |&
\leq \|I_h \bar \partial_\tau^\alpha e^j \|_{L^2(\Omega)} + \tau^{\alpha} \|I_hA_h\bar\partial_\tau^\alpha e^j\|_{L^2(\Omega)}\\
&\leq \|I_hA_he^j \|_{L^2(\Omega)} + \|I_h\sigma^j\|_{L^2(\Omega)} + \tau^\alpha \|I_hA_h\bar\partial_\tau^\alpha e^j\|_{L^2(\Omega)}.
\end{align*}
Then the inverse estimate in Lemma \ref{lem:inverse} implies
\begin{align*}
  {\rm LHS}\leq &\tau \sum_{j=1}^n (t_{n+1}-t_j)^{\frac\alpha2-1}(\|I_hA_he^j\|_{L^2(\Omega)}+\|I_h\sigma^j\|_{L^2(\Omega)}) \\
   &\quad + \tau^{1+\alpha} \sum_{j=1}^n (t_{n+1}-t_j)^{\frac\alpha2-1}  \|I_h A_h\bar \partial_\tau^\alpha e^j \|_{L^2(\Omega)}\\
\le& c\tau\sum_{j=1}^n (t_{n+1}-t_j)^{\frac\alpha2-1}(\|I_hA_he^j\|_{L^2(\Omega)}+\|I_h\sigma^j\|_{L^2(\Omega)}).
\end{align*}
Now the representation $e^j=\tau\sum_{\ell=1}^jE_{h,\tau}^{j-\ell}\sigma^\ell$ in \eqref{eqn:sol-fully}, and Lemma \ref{lem:stab-sol-op} yield
\begin{align*}
  \|I_hA_he^j\|_{L^2(\Omega)} \le c\tau \sum_{\ell=1}^j(t_{j+1}-t_\ell)^{-1}\|I_h\sigma^\ell\|_{L^2(\Omega)}.
\end{align*}
The last two estimates and changing the summation order give
\begin{align*}
{\rm LHS}\leq &c\tau\sum_{j=1}^n(t_{n+1}-t_j)^{\frac\alpha2-1}\Big(\tau\sum_{\ell=1}^j(t_{j+1}-t_\ell)^{-1}
\|I_h\sigma^\ell\|_{L^2(\Omega)}+\|I_h\sigma^j\|_{L^2(\Omega)}\Big)\\
= &c\tau\sum_{\ell=1}^n\|I_h\sigma^\ell\|_{L^2(\Omega)} \Big(\tau \sum_{j=\ell}^n(t_{n+1}-t_j)^{\frac\alpha2-1}(t_{j+1}-t_\ell)^{-1}
+(t_{n+1}-t_\ell)^{\frac\alpha2-1}\Big).
\end{align*}
This and Lemma \ref{lem:basic-est1} complete the proof.\qed
\end{proof}

Now we can give an error estimate for the scheme \eqref{eqn:fully2} for smooth initial data, i.e., $Av\in L^2(\Omega)$. The error bound
for \eqref{eqn:fully2} is identical with that for the exact linear solver, up to the logarithmic factor $\ell_n$.
\begin{theorem}\label{thm:err-smooth-ini}
Let $A v\in L^2(\Omega)$ and condition \eqref{eqn:Bhtau} hold. Let $U_h^n\equiv U_h^{n,M_n}$
be the solution of \eqref{eqn:Uhn0}--\eqref{eqn:fully2} with $f=0$ and $v_h=R_hv$, and let
$U_h^n= \overline{U}_h^n$ for $n=1,2$. Then with $\ell_n=\ln(1+t_n/\tau)$, there exists a $\delta>0$ such that
\begin{equation*}
   \|U_h^n-u_h(t_n)\|_{L^2(\Omega)} \le c\tau t_n^{\alpha-1}\ell_n\| Av\|_{L^2(\Omega)},\quad \text{if}~c_0\kappa^{M_n} \le \delta \ell_n^{-1}\min(t_n^\frac{\alpha}{2},1).
\end{equation*}
\end{theorem}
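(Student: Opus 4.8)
The plan is to mimic the three-step structure of Theorem \ref{thm:iter-smooth}, but now carrying weights $(t_{n+1}-t_j)^{\frac\alpha2-1}$ instead of working in the flat $\ell^q$-norm, since the iteration number $M_n$ is $n$-dependent and the nonsmooth-data error behaves like $\tau t_n^{\alpha-1}$. Write $\vartheta^n = U_h^{n,M_n}-u_h(t_n)$ (note we subtract $u_h$, not $R_h u$, so no spatial $\varrho^n$ term enters here), and observe that $\vartheta^0=0$ with $\vartheta^1=\vartheta^2=0$ by hypothesis. As in \eqref{eqn:sig1}, $\vartheta^n$ satisfies $\bar\partial_\tau^\alpha\vartheta^n + A_h\vartheta^n = \sigma^n$ with $\sigma^n = (I+\tau^\alpha A_h)\eta^n + \omega^n$, where now $\eta^n = \tau^{-\alpha}(U_h^{n,M_n}-\overline U_h^n)$ is the inexact-solve residual and $\omega^n = -(\bar\partial_\tau^\alpha - \partial_t^\alpha)(u_h(t_n)-v_h)$ is the pure time-discretization consistency error, controlled by Lemma \ref{lem:tal2}(i) as $\|A_h^{-1/2}\omega^n\|_{L^2} \le c\|\omega^n\|_{L^2}\le c\tau t_n^{-1}\|Av\|_{L^2}$ (with the first time levels handled directly since $\vartheta^1=\vartheta^2=0$).

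\textbf{Step 1 (contraction bound on $\eta^n$).} Exactly as in Step 2 of Theorem \ref{thm:iter-smooth}, the assumption \eqref{eqn:Bhtau} with $c_0\kappa^{M_n}\le\delta\ell_n^{-1}\min(t_n^{\alpha/2},1)$ gives $|U_h^{n,M_n}-\overline U_h^n| \le \epsilon_n |U_h^{n,0}-U_h^{n,M_n}|$ with $\epsilon_n \asymp \delta\ell_n^{-1}\min(t_n^{\alpha/2},1)$, and the extrapolation \eqref{eqn:Uhn0} yields $U_h^{n,M_n}-U_h^{n,0} = \tau\bar\partial_\tau\vartheta^n - \tau\bar\partial_\tau\vartheta^{n-1} + \tau^2\bar\partial_\tau^2 u_h(t_n)$. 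Hence
\[
|\eta^n| \le c\epsilon_n\tau^{1-\alpha}\big(|\bar\partial_\tau\vartheta^n| + |\bar\partial_\tau\vartheta^{n-1}|\big) + c\epsilon_n\tau^{2-\alpha}|\bar\partial_\tau^2 u_h(t_n)|,
\]
and Lemma \ref{lem:est-uh-weight} (with $\beta=1$) bounds the last term by $c\epsilon_n\tau^{2-\alpha}t_n^{\alpha-2}\|Av\|_{L^2} = c\epsilon_n\tau^{2-\alpha}t_n^{\alpha-2}\|Av\|_{L^2}$. The key point is that $\epsilon_n$ carries the extra factor $\ell_n^{-1}\min(t_n^{\alpha/2},1)$, which is what will later cancel the logarithmic loss from Lemma \ref{lem:disc-stab-2} and tame the residual near $t=0$.

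\textbf{Step 2 (weighted stability and absorption).} Apply the representation $\vartheta^n = \tau\sum_{j=1}^n E_{h,\tau}^{n-j}\sigma^j$ together with Lemma \ref{lem:stab-sol-op} to get, for the weighted quantity $S_n := \tau\sum_{j=1}^n(t_{n+1}-t_j)^{\frac\alpha2-1}\|A_h^{1/2}\vartheta^j\|_{L^2}$ (or rather I would work with $\|\vartheta^n\|_{L^2}$ plus an auxiliary weighted sum of $|\bar\partial_\tau\vartheta^j|$), a bound of the form $S_n \le c\,\tau\sum_{j=1}^n(t_{n+1}-t_j)^{\frac\alpha2-1}\|I_h\sigma^j\|_{L^2}$; here Lemma \ref{lem:disc-stab-2} is the crucial tool for handling the $\bar\partial_\tau\vartheta^j$ terms hidden inside $\sigma^j$ via $\eta^j$, and it is exactly this step that produces the $\ell_n$ factor in the final estimate. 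Splitting $\|I_h\sigma^j\|_{L^2} \le c|\eta^j| + c\|\omega^j\|_{L^2}$ and inserting the Step-1 bound, the $|\bar\partial_\tau\vartheta^j|$-contributions get multiplied by $\epsilon_n\tau^{1-\alpha}$ times a discrete convolution with kernel $(t_{n+1}-t_j)^{\frac\alpha2-1}$; by Lemma \ref{lem:basic-est1} and the inverse inequality Lemma \ref{lem:inverse} this is dominated by $c\epsilon_n\ell_n$ times $S_n$ itself, while the $\omega^j$ and $\bar\partial_\tau^2 u_h$ contributions sum (again by Lemma \ref{lem:basic-est1}) to $c\tau t_n^{\alpha-1}\ell_n\|Av\|_{L^2}$. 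Choosing $\delta$ small makes $c\epsilon_n\ell_n \le c\delta < \tfrac12$ uniformly in $n$ (this is precisely why $\epsilon_n$ was required to scale like $\ell_n^{-1}$), so the self-referential term is absorbed and one concludes $S_n \le c\tau t_n^{\alpha-1}\ell_n\|Av\|_{L^2}$, whence $\|\vartheta^n\|_{L^2} \le c\tau t_n^{\alpha-1}\ell_n\|Av\|_{L^2}$ by a final application of the solution-operator estimates.

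\textbf{Main obstacle.} The delicate part is the bookkeeping in Step 2: one must set up a single weighted norm (a weighted $\ell^1$-sum over $j$ with kernel $(t_{n+1}-t_j)^{\frac\alpha2-1}$, acting on $\|\vartheta^j\|_{L^2}$ and on $|\bar\partial_\tau\vartheta^j|$ simultaneously) for which \emph{both} Lemma \ref{lem:disc-stab-2} and the inverse inequality Lemma \ref{lem:inverse} apply, and then verify that every self-referential term genuinely carries the small factor $\epsilon_n\ell_n = O(\delta)$ rather than, say, $\epsilon_n\ell_n^2$ or an $n$-growing constant — the triple convolution of the kernels $(t_{n+1}-t_j)^{\frac\alpha2-1}$, $(t_{j+1}-t_\ell)^{-1}$ and the prefactors $\tau^{1-\alpha}\bar\partial_\tau$ must be estimated carefully using Lemma \ref{lem:basic-est1} so that no extra logarithm accumulates. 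A secondary technical nuisance is the behavior near $t=0$: for the first couple of time levels the weights and the factor $t_n^{\alpha/2}$ in $\epsilon_n$ must be checked to ensure the residual $|\eta^n|$ does not blow up, which is why the hypothesis pins $U_h^n=\overline U_h^n$ for $n=1,2$ and why $\epsilon_n$ carries $\min(t_n^{\alpha/2},1)$ rather than a constant.
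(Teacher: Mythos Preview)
Your plan matches the paper's proof: the same error equation with $\sigma^n=(I+\tau^\alpha A_h)\eta^n+\omega^n$, the same contraction-plus-extrapolation bound on $|\eta^n|$, and the same closing of a weighted-$\ell^1$ self-referential inequality (the paper closes on the quantity $\tau\sum_{j}(t_{n+1}-t_j)^{\alpha/2-1}|\eta^j|$) via Lemmas~\ref{lem:inverse} and~\ref{lem:disc-stab-2}, with the $\ell_n^{-1}$ in the contraction factor absorbing the $\ell_n$ produced by the latter and the $t_j^{\alpha/2}$ weight taming the $\bar\partial_\tau^2 u_h$ contribution as in \eqref{eqn:est-y1}. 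One correction: the hypothesis $U_h^n=\overline U_h^n$ for $n=1,2$ gives $\eta^1=\eta^2=0$, not $\vartheta^1=\vartheta^2=0$ --- the errors $e^1,e^2$ are the ordinary (nonzero) BE--CQ errors, and the paper simply notes that for $n=1,2$ the claimed bound is the known estimate for the exact scheme.
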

\vspace{-0.3cm}
\begin{proof}
The desired estimate holds trivially for $n=1,2$, and thus we consider only $n>2$.
Note that $e^n =U_h^n-u_h(t_n)$ satisfies $e^0=0$ and
\begin{equation}\label{eqn:err-smooth-data}
 \bar\partial_\tau^\alpha e^n + A_h e^n = \sigma^n :=\omega^n +  (I+\tau^\alpha A_h){\eta}^n,
\end{equation}
where $\omega^n$ and $\eta^n$ are defined respectively by
\begin{equation}\label{eqn:omega-eta}
 \omega^n = -(\bar\partial_\tau^\alpha-\partial_t^\alpha)(u_h(t_n)-v_h) \qquad\text{and}\qquad {\eta}^n =\tau^{-\alpha} (U_h^n - \overline{U}_h^n),
\end{equation}
where the auxiliary function $\overline{U}_h^n\in X_h$ satisfies $\overline{U}_h^0 = R_hv$ and
\begin{equation}\label{eqn:barUhn}
  \tau^{-\alpha} \Big(\overline{U}_h^n + \sum_{j=1}^n {b_{j}^{(\alpha)} U_h^{n-j}} -  \sum_{j=0}^n b_{j}^{(\alpha)} U_h^0\Big) + A_h \overline{U}_h^n  = f_h^n,\quad n=1,2,\ldots,N.
\end{equation}
The rest of the proof consists of three steps.

\noindent{\bf Step 1}: Bound $\|e^n\|_{L^2(\Omega)}$ by local truncation errors. Since $e^0=0$, by the error equation
\eqref{eqn:err-smooth-data}, \eqref{eqn:sol-fully} and Lemma \ref{lem:stab-sol-op}, $e^n$ is bounded by
\begin{align*}
     \|  e^n  \|_{L^2(\Omega)}  \le &  c \tau \sum_{j=1}^n (t_{n+1}-t_j)^{\alpha-1}\|\omega^j \|_{L^2(\Omega)} \\
      &+ c \tau \sum_{j=1}^n (t_{n+1}-t_j)^{\frac\alpha2-1}\| A_h^{-\frac12}(I+\tau^\alpha A_h){\eta}^j\|_{L^2(\Omega)}: = {\rm I} + {\rm II}.
\end{align*}
It suffices to bound the two terms ${\rm I}$ and ${\rm II}$. By Lemmas \ref{lem:tal2}(i) and
\ref{lem:basic-est1}, the first term ${\rm I}$ can be bounded by
\begin{align}
  {\rm I} & \leq c\tau^2 \|Av\|_{L^2(\Omega)} \sum_{j=1}^n (t_{n+1}-t_j)^{\alpha-1}t_j^{-1}
\leq c\tau t_n^{\alpha-1}\ell_n \|Av\|_{L^2(\Omega)}.\label{eqn:est-om}
\end{align}
Further, it follows directly from \eqref{eqn:eta} that
\begin{equation}\label{eqn:est-eta}
  {\rm II} \leq c\tau \sum_{j=1}^n(t_{n+1}-t_j)^{\frac{\alpha}{2}-1}|\eta^j|.
\end{equation}

\noindent {\bf Step 2}: Bound the summand $|\eta^j|$. By assumption \eqref{eqn:Bhtau} and triangle inequality,
for any integer $M_n$, there holds
\begin{equation*}
 |U_h^{n} - \overline{U}_h^n| \le c_0 \kappa^{M_n} \big( |U_h^{n,0} - U_h^{n}| + |U_h^{n}-\overline{U}_h^n | \big).
\end{equation*}
Now choose $M_n$ such that $c_0\kappa^{M_n} \le \delta \min(t_n^{\frac\alpha2},1)\ell_n^{-1}$, and let $\epsilon=\frac{\delta}{1-\delta}\ell_n^{-1}$. Since
$c_0 \kappa^{M_n}/(1-c_0 \kappa^{M_n})\leq \epsilon t_n^\frac{\alpha}{2}$, rearranging the terms yields
\begin{equation*}
 |U_h^{n} - \overline{U}_h^n|\le \epsilon t_n^{\frac\alpha2} |U_h^{n,0} - U_h^{n} |,
\end{equation*}
and by the definition of $\eta^n$ in \eqref{eqn:omega-eta}, $\eta^1=\eta^2=0$ and for $n>2$
\begin{equation*}
 |\eta^n| = \tau^{-\alpha} |U_h^{n} - \overline{U}_h^n|\le\epsilon\tau^{-\alpha}t_n^{\frac\alpha2}|U_h^{n,0}-U_h^{n} |,
\end{equation*}
which together with the choice of $ U_h^{n,0}$ in \eqref{eqn:Uhn0} implies
\begin{equation}\label{eqn:eta-n}
  |\eta^n| \le \epsilon \tau^{1-\alpha} t_n^{\frac\alpha2}   \big(|\bar \partial_\tau   e^n|+|\bar \partial_\tau   e^{n-1}| + \tau|  \bar \partial_\tau^2u_h(t_n)|\big).
\end{equation}
By Lemma \ref{lem:est-uh-weight},
\begin{equation*}
 |\bar\partial_\tau^2 u_h(t_j)|\leq ct_j^{\alpha-2}\|Av\|_{L^2(\Omega)},\quad j>2.
\end{equation*}
This and Lemma \ref{lem:basic-est1} give
\begin{equation}\label{eqn:est-y1}
\begin{split}
 &\tau^{3-\alpha} \sum_{j=3}^n (t_{n+1}-t_j)^{\frac\alpha2-1} t_j^{\frac\alpha2}|  \bar \partial_\tau^{2}u_h(t_j)|\\
\le&c\tau^2 \sum_{j=3}^n (t_{n+1}-t_j)^{\frac\alpha2-1} t_j^{\frac\alpha2-1}  \|Av\|_{L^2(\Omega)}\\
\le&c \tau t_n^{\alpha-1}\|Av\|_{L^2(\Omega)}.
\end{split}
\end{equation}

\noindent{\bf Step 3}: Bound explicitly the term ${\rm II}$.
The estimates \eqref{eqn:eta-n} and \eqref{eqn:est-y1} imply
\begin{align*}
   \tau \sum_{j=1}^n(t_{n+1}-t_j)^{\frac{\alpha}{2}-1}|\eta^j|&\le  c\epsilon \tau t_n^{\alpha-1} \| Av\|_{L^2(\Omega)} \\
      &\quad+ c\epsilon \tau^{2-\alpha} \sum_{j=2}^n (t_{n+1}-t_j)^{\frac\alpha2-1} t_j^{\frac\alpha2}   |\bar \partial_\tau  e^j|.
\end{align*}
By the associativity identity $\bar \partial_\tau^{1-\alpha} \bar \partial_\tau^{\alpha} e^j=\bar \partial_\tau e^j$
and Lemma \ref{lem:inverse}, we get
\begin{align*}
  \tau^{2-\alpha} \sum_{j=2}^n (t_{n+1}-t_j)^{\frac\alpha2-1} t_j^{\frac\alpha2}  |\bar \partial_\tau  e^j|
  &\le   c \tau  \sum_{j=2}^n (t_{n+1}-t_j)^{\frac\alpha2-1}   |  \bar \partial_\tau^{\alpha} e^j|.
\end{align*}
Further, by Lemma \ref{lem:disc-stab-2} and \eqref{eqn:est-om}, there holds
\begin{align*}
  &\tau \sum_{j=1}^n (t_{n+1}-t_j)^{\frac\alpha2-1} |\bar \partial_\tau^\alpha e^j|\\
\le &c\ell_n \tau  \sum_{j=1}^n (t_{n+1}-t_j)^{\frac\alpha2-1} (\|(I+\tau^\alpha A_h)^{-\frac12}\omega^j\|_{L^2(\Omega)} + |\eta^j|)\\
\le &c\ell_n\tau  \sum_{j=1}^n (t_{n+1}-t_j)^{\frac\alpha2-1} |\eta^j| + c\ell_n^2\tau t_n^{\alpha-1}\|Av\|_{L^2(\Omega)}.
\end{align*}
The last three estimates together lead to
\begin{equation*}
\tau \sum_{j=1}^n(t_{n+1}-t_j)^{\frac{\alpha}{2}-1}|\eta^j| \leq
c\epsilon \tau t_n^{\alpha-1} \ell_n^2\| Av\|_{L^2(\Omega)} + c\epsilon\ell_n\tau \sum_{j=1}^n (t_{n+1}-t_j)^{\frac\alpha2-1} |\eta^j|,
\end{equation*}
which upon choosing a sufficiently small $\delta$ and {noting $\epsilon=\frac{\delta}{1-\delta}\ell_n^{-1}$} implies
\begin{equation*}
\tau \sum_{j=1}^n(t_{n+1}-t_j)^{\frac{\alpha}{2}-1}|\eta^j| \leq
c \tau t_n^{\alpha-1}\ell_n\| Av\|_{L^2(\Omega)}.
\end{equation*}
This, and the estimates \eqref{eqn:est-om}--\eqref{eqn:est-eta} complete the proof.\qed
\end{proof}

\subsection{Nonsmooth initial data}
Now we turn to nonsmooth initial data, i.e., $v\in L^2\II$. First we give a weighted estimate on the
time stepping scheme \eqref{eqn:fully}. The weight $t_n$ in the estimate is to compensate the strong
singularity of the summands.
\begin{lemma}\label{lem:stab2}
If $e^n\in X_h$ satisfies $e^0=0$ and
$\bar\partial_\tau^\alpha e^n + A_h e^n = \sigma^n$, $n=1,\ldots,N,$
then
\begin{equation*}
    t_n \|e^n\|_{L^2\II} \le c \tau \sum_{j=1}^n\big( \|A_h^{-1} \sigma^j\|_{L^2\II}+ (t_{n+1}-t_j)^{\frac\alpha2-1}  t_j \|  A_h^{-\frac12}\sigma^j \|_{L^2(\Omega)}\big).
\end{equation*}
\end{lemma}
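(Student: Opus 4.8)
The plan is to follow the same template as in Lemma~\ref{lem:disc-stab-2}: use the solution representation $e^n = \tau\sum_{j=1}^n E_{h,\tau}^{n-j}\sigma^j$ from \eqref{eqn:sol-fully}, insert the smoothing bounds of Lemma~\ref{lem:stab-sol-op}, and then carefully bookkeep the extra weight $t_n$ by distributing it as $t_n \le (t_{n+1}-t_j) + t_j$ and splitting the sum accordingly. Concretely, I would write $t_n\|e^n\|_{L^2} \le \tau\sum_{j=1}^n t_n \|E_{h,\tau}^{n-j}\sigma^j\|_{L^2}$ and estimate $\|E_{h,\tau}^{n-j}\sigma^j\|_{L^2} \le \|A_h E_{h,\tau}^{n-j}\|\,\|A_h^{-1}\sigma^j\|_{L^2}$ on the part of the sum where $t_j$ is small (say $j$ near $n$, so that $t_n$ is comparable to $t_{n+1}-t_j$ up to the singular factor being integrable), and $\|E_{h,\tau}^{n-j}\sigma^j\|_{L^2} \le \|A_h^{1/2} E_{h,\tau}^{n-j}\|\,\|A_h^{-1/2}\sigma^j\|_{L^2} \le c(t_{n+1}-t_j)^{\alpha/2-1}\|A_h^{-1/2}\sigma^j\|_{L^2}$ on the rest. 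The point of the two terms on the right-hand side is exactly that the first handles the regime where $\|A_h E_{h,\tau}^{n-j}\| \le c(t_{n+1}-t_j)^{-1}$ is not locally integrable against a $t_n$ weight but is fine once we only keep $\|A_h^{-1}\sigma^j\|$, while the second carries the weight $t_j$ explicitly.

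More precisely, I would argue: by Lemma~\ref{lem:stab-sol-op} with $\beta=1$, $\|A_h E_{h,\tau}^{n-j}\| \le c(t_{n+1}-t_j)^{\alpha-2}$, hence for those indices with $t_j \le \tfrac12 t_{n+1}$ one has $t_n \le c(t_{n+1}-t_j)$, giving $t_n \|A_h E_{h,\tau}^{n-j}\| \le c(t_{n+1}-t_j)^{\alpha-1}$, which is integrable in $j$; so that block contributes $c\tau\sum_j \|A_h^{-1}\sigma^j\|_{L^2}$ after bounding $(t_{n+1}-t_j)^{\alpha-1} \le c t_n^{\alpha-1}$ and using $\tau\sum_j t_n^{\alpha-1} = t_n^{\alpha-1}\cdot n\tau = t_n^\alpha$... wait, that overshoots; instead I keep $\tau\sum_j (t_{n+1}-t_j)^{\alpha-1} \le c t_{n+1}^\alpha$ and note $t_{n+1}^\alpha \le c t_n^\alpha$ is absorbed, so actually I must be slightly more careful and pull out $t_n^{?}$. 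The cleaner route is simply: on the block $t_j > \tfrac12 t_{n+1}$, write $t_n \le 2 t_j$ and use $\|A_h^{1/2}E_{h,\tau}^{n-j}\| \le c(t_{n+1}-t_j)^{\alpha/2-1}$ to get the second term $c\tau\sum_j (t_{n+1}-t_j)^{\alpha/2-1} t_j \|A_h^{-1/2}\sigma^j\|_{L^2}$ verbatim; on the block $t_j \le \tfrac12 t_{n+1}$, use $t_n \le 2(t_{n+1}-t_j)$ together with $\|A_h E_{h,\tau}^{n-j}\| \le c(t_{n+1}-t_j)^{\alpha-2}$ so that $t_n\|A_h E_{h,\tau}^{n-j}\| \le c(t_{n+1}-t_j)^{\alpha-1}$, and then crudely bound $(t_{n+1}-t_j)^{\alpha-1} \le c$ only after absorbing... here is where I'd invoke Lemma~\ref{lem:basic-est1}: $\tau\sum_{j:\,t_j\le t_{n+1}/2}(t_{n+1}-t_j)^{\alpha-1} \le c t_{n+1}^\alpha \le c T^{\alpha-?}\cdot$; since the claimed bound only has a bare $\tau\sum_j \|A_h^{-1}\sigma^j\|$ with constant $c$ (absorbing $T$), this is exactly the first term.

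The main obstacle is the precise matching of powers in the first block so that the $t_n$ weight is cleanly absorbed into the integrable kernel $(t_{n+1}-t_j)^{\alpha-1}$ without leaving a stray factor of $t_n$ or $t_{n+1}^\alpha$; this forces the split point to be chosen so that $t_n \lesssim t_{n+1}-t_j$ there, i.e. the split at $t_j \sim t_{n+1}/2$ (equivalently $j \le n/2$), and one must also treat the boundary indices and the $j=n$ term (where $t_{n+1}-t_j = \tau$) carefully — the $E_{h,\tau}^0$ term is bounded by $\|A_h E_{h,\tau}^0\| \le c\tau^{\alpha-2}$ and $t_n\tau^{\alpha-2}\cdot\tau\cdot\|A_h^{-1}\sigma^n\| \le c\,t_n\tau^{\alpha-1}\|A_h^{-1}\sigma^n\|$, which for $\alpha<1$ and $t_n\le T$ is $\le c\tau^{?}$... the safest is to note this single term is dominated by the $j=n$ contribution of the \emph{second} sum, $(t_{n+1}-t_n)^{\alpha/2-1}t_n\|A_h^{-1/2}\sigma^n\| = \tau^{\alpha/2-1}t_n\|A_h^{-1/2}\sigma^n\|$, via $\|A_h^{-1}\sigma^n\| \le c\tau^{\alpha/2}\|A_h^{-1/2}\sigma^n\|$ (which holds since $\|A_h^{-1/2}\| \le c$, giving $\|A_h^{-1}\sigma\| \le c\|A_h^{-1/2}\sigma\|$, and then $\tau^{\alpha/2}$ is harmless) — but this requires a lower bound on $A_h$, i.e. $\|A_h^{-1/2}\psi\| \le c\|\psi\|$ from the Poincaré inequality, which is standard. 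I expect the bookkeeping of these two blocks plus the endpoint to be the only real content; the rest is a direct application of Lemmas~\ref{lem:stab-sol-op} and~\ref{lem:basic-est1}.
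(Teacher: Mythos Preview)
Your overall strategy---use the representation $e^n=\tau\sum_j E_{h,\tau}^{n-j}\sigma^j$, distribute the weight $t_n$, and split into two pieces that produce the two terms in the estimate---is the right one, and would work once a slip is fixed. The slip: Lemma~\ref{lem:stab-sol-op} with $\beta=1$ gives $\|A_h E_{h,\tau}^{n-j}\|\le c(t_{n+1}-t_j)^{-1}$, \emph{not} $c(t_{n+1}-t_j)^{\alpha-2}$. With the correct exponent, on your block $t_j\le t_{n+1}/2$ you get $t_n\|A_hE_{h,\tau}^{n-j}\|\le 2(t_{n+1}-t_j)\cdot c(t_{n+1}-t_j)^{-1}=2c$, so the first term comes out immediately with no stray $t_{n+1}^\alpha$ factor and no need for the elaborate endpoint discussion.

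The paper's argument is shorter because it replaces your block splitting by the algebraic identity $t_n=(t_n-t_j)+t_j$ applied \emph{inside} the sum:
\[
t_ne^n=\tau\sum_{j=1}^n(t_n-t_j)E_{h,\tau}^{n-j}\sigma^j+\tau\sum_{j=1}^nt_jE_{h,\tau}^{n-j}\sigma^j.
\]
In the first sum one uses $\|A_hE_{h,\tau}^{n-j}\|\le c(t_{n+1}-t_j)^{-1}$ together with $(t_n-t_j)(t_{n+1}-t_j)^{-1}\le 1$, yielding the $\|A_h^{-1}\sigma^j\|$ term directly; in the second one uses $\|A_h^{1/2}E_{h,\tau}^{n-j}\|\le c(t_{n+1}-t_j)^{\alpha/2-1}$, yielding the other term with the factor $t_j$ already in place. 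There is no block cutoff, no endpoint case, and no appeal to Lemma~\ref{lem:basic-est1}. Your dyadic split is a valid alternative, but the algebraic decomposition makes the proof a two-line computation.
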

\begin{proof}
Using  \eqref{eqn:sol-fully} and the splitting $t_n=(t_n-t_j)+t_j$, we have
\begin{equation*}
  t_ne^n %= t_n\tau \sum_{j=1}^nE_{h,\tau}^{n-j}\sigma^j
  = \tau\sum_{j=1}^n(t_n-t_j)E_{h,\tau}^{n-j}\sigma^j + \tau\sum_{j=1}^nt_jE_{h,\tau}^{n-j}\sigma^j
\end{equation*}
Then from Lemma \ref{lem:stab-sol-op}, we deduce
\begin{align*}
  t_n\|e^n\|_{L^2(\Omega)} % &\leq \tau\sum_{j=1}^n(t_n-t_j)\|E_{h,\tau}^{n-j}\sigma^j\|_{L^2(\Omega)} + \tau\sum_{j=1}^nt_j\|E_{h,\tau}^{n-j}\sigma^j\|_{L^2(\Omega)}\\
    & \leq \tau\sum_{j=1}^n(t_n-t_j)\|A_hE_{h,\tau}^{n-j}\|\|A_h^{-1}\sigma^j\|_{L^2(\Omega)} \\
     &\quad + \tau \sum_{j=1}^nt_j\|A_h^\frac12E_{h,\tau}^{n-j}\|\|A_h^{-\frac12}\sigma^j\|_{L^2(\Omega)}\\
    & \leq c\tau\sum_{j=1}^n(t_n-t_j)(t_{n+1}-t_j)^{-1}\|A_h^{-1}\sigma^j\|_{L^2(\Omega)} \\
     &\quad + c\tau\sum_{j=1}^nt_j(t_{n+1}-t_j)^{\frac\alpha2-1}\|A_h^{-\frac12}\sigma^j\|_{L^2(\Omega)},
   % & \leq c\tau\sum_{j=1}^n\Big(\|A_h^{-1}\sigma^j\|_{L^2(\Omega)}+t_j(t_{n+1}-t_j)^{\frac\alpha2-1}\|A_h^{-\frac12}\sigma^j\|_{L^2(\Omega)}\Big).
\end{align*}
from which the desired assertion follows directly.\qed
\end{proof}

\begin{lemma}\label{lem:bdd-eta}
Let $e^n\in X_h$ satisfy $e^0=0$ and
$\bar\partial_\tau^\alpha e^n + A_h e^n = \sigma^n$, $n=1,\ldots,N.$
Then with $\ell_n=\ln(1+t_n/\tau)$, there holds
\begin{align*}
  &\tau^{2-\alpha} \sum_{j=1}^n\big(t_j+(t_{n+1}-t_j)^{\frac\alpha2-1}t_j^2\big) |\bar \partial_\tau e^j|\\
  \leq& c\ell_nt_n\tau\sum_{j=1}^n\big(1+t_j(t_{n+1}-t_j)^{\frac{\alpha}{2}-1}\big)\|(I+\tau^\alpha A_h)^{-\frac12}\sigma^j\|_{L^2(\Omega)}.
\end{align*}
\end{lemma}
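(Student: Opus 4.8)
The plan is to estimate $|\bar\partial_\tau e^j|$ for each $j$ by a discrete convolution of the data $\|(I+\tau^\alpha A_h)^{-\frac12}\sigma^\ell\|_{L^2(\Omega)}$ against kernels built from the solution operator $E_{h,\tau}$, and then to carry out the two outer weighted sums by splitting the time weights $t_j,t_j^2$ across those kernels, as in the proof of Lemma \ref{lem:stab2}. Throughout put $I_h=(I+\tau^\alpha A_h)^{-\frac12}$ and, for $m\ge1$, $\bar\partial_\tau E_{h,\tau}^m=(E_{h,\tau}^m-E_{h,\tau}^{m-1})/\tau$.

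First I would use the representation \eqref{eqn:sol-fully} together with $e^0=0$ to write $\bar\partial_\tau e^j=E_{h,\tau}^0\sigma^j+\tau\sum_{\ell=1}^{j-1}(\bar\partial_\tau E_{h,\tau}^{j-\ell})\sigma^\ell$. Since $\bar\partial_\tau$ commutes with $(I+\tau^\alpha A_h)^{\frac12}$, writing $\sigma^\ell=(I+\tau^\alpha A_h)^{\frac12}I_h\sigma^\ell$ reduces the weighted norm $|\bar\partial_\tau e^j|$ to bounding, uniformly in $m$, the operator norm $\|(I+\tau^\alpha A_h)^{\frac12}(\bar\partial_\tau E_{h,\tau}^m)(I+\tau^\alpha A_h)^{\frac12}\|=\|(I+\tau^\alpha A_h)\bar\partial_\tau E_{h,\tau}^m\|$ (these operators being functions of $A_h$), and likewise for $E_{h,\tau}^0$. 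By Lemma \ref{lem:stab-sol-op} with $\beta=0$ and $\beta=1$ one gets $\|(I+\tau^\alpha A_h)\bar\partial_\tau E_{h,\tau}^m\|\le c(t_{m+1}^{\alpha-2}+\tau^\alpha t_{m+1}^{-2})$ and $\|(I+\tau^\alpha A_h)E_{h,\tau}^0\|\le c\tau^{\alpha-1}$, the latter folding into the former at $m=0$. This yields the pointwise estimate
\begin{equation*}
 |\bar\partial_\tau e^j|\le c\,\tau\sum_{\ell=1}^{j}\big(t_{j-\ell+1}^{\alpha-2}+\tau^\alpha t_{j-\ell+1}^{-2}\big)\|I_h\sigma^\ell\|_{L^2(\Omega)},
\end{equation*}
in which both kernel exponents are below $-1$.

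Next I would insert this into $S_1:=\tau^{2-\alpha}\sum_{j=1}^n t_j|\bar\partial_\tau e^j|$ and $S_2:=\tau^{2-\alpha}\sum_{j=1}^n(t_{n+1}-t_j)^{\frac\alpha2-1}t_j^2|\bar\partial_\tau e^j|$, interchange the order of summation, and decompose the weights by $t_j=t_\ell+(t_j-t_\ell)$ and $t_j^2=(t_j-t_\ell)^2+2(t_j-t_\ell)t_\ell+t_\ell^2$. Each surplus factor $(t_j-t_\ell)^q\le t_{j-\ell+1}^q$ is absorbed into the kernel, raising the power of $t_{j-\ell+1}$ by $q$; the residual powers $t_\ell$ and $t_\ell^2$ leave the inner $j$-sum, and those inner sums — convolutions of $(t_{n+1}-t_j)^{\frac\alpha2-1}$ (for $S_2$) or $1$ (for $S_1$) against the shifted kernels — are controlled by Lemma \ref{lem:basic-est1}. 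Reassembling them with the surviving $(t_{n+1}-t_j)^{\frac\alpha2-1}$ produces the right-hand side weight $1+t_j(t_{n+1}-t_j)^{\frac\alpha2-1}$, while a single surplus power bounded crudely by $t_\ell\le t_n$ produces the global factor $t_n$. The logarithm $\ell_n=\ln(1+t_n/\tau)$ appears whenever a surplus factor $(t_j-t_\ell)$ reduces the singular kernel $\tau^\alpha t_{j-\ell+1}^{-2}$ to $\tau^\alpha t_{j-\ell+1}^{-1}$, since the resulting sum $\tau\sum_j(t_{n+1}-t_j)^{\frac\alpha2-1}t_{j-\ell+1}^{-1}$ is exactly the $\ell_n$-case of Lemma \ref{lem:basic-est1}, just as in the proof of Lemma \ref{lem:disc-stab-2}. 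Collecting the remaining powers of $\tau$ and $t_n$, using $\tau\le t_n\le T$, then gives the assertion.

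I expect the bookkeeping of this weight-splitting to be the main obstacle: one must track the power of $t_{j-\ell+1}$ carried by each term after absorbing the surplus factors, verify that every resulting inner sum is either absolutely summable (exponent $<-1$) or of the convolution form handled by Lemma \ref{lem:basic-est1}, and confirm that no worse than one factor $t_n$ and one logarithm $\ell_n$ accumulate. The non-standard norm $|\cdot|$ causes no further trouble: writing $\sigma^\ell=(I+\tau^\alpha A_h)^{\frac12}I_h\sigma^\ell$ and absorbing the extra $(I+\tau^\alpha A_h)^{\frac12}$ into the kernel merely doubles the number of kernel terms, and the compensating factor $\tau^\alpha$ in front of the more singular kernel $t_{j-\ell+1}^{-2}$ is precisely what keeps the near-diagonal contributions finite. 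An alternative is to peel off the first difference via the associativity $\bar\partial_\tau=\bar\partial_\tau^{1-\alpha}\bar\partial_\tau^\alpha$ (valid since $e^0=0$) and invoke Lemma \ref{lem:disc-stab-2} together with a $t_j$-weighted variant of it, but that variant has to be set up separately and the weight $(t_{n+1}-t_j)^{\frac\alpha2-1}$ then ends up paired with the wrong summation index, so the convolution route above seems more direct.
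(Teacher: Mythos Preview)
Your approach is correct and genuinely different from the paper's. The paper does \emph{not} differentiate the solution representation directly; instead it writes $\bar\partial_\tau=\bar\partial_\tau^{1-\alpha}\bar\partial_\tau^\alpha$ (the route you mention and set aside at the end), bounds $a^j:=|\bar\partial_\tau^\alpha e^j|$ by $\|I_h\sigma^j\|_{L^2}+\|I_hA_he^j\|_{L^2}+\tau^\alpha\|I_hA_h\bar\partial_\tau^\alpha e^j\|_{L^2}$, feeds in the representation \eqref{eqn:sol-fully} for $e^j$ to control $\|I_hA_he^j\|_{L^2}$ by $c\tau\sum_\ell(t_{j+1}-t_\ell)^{-1}\|I_h\sigma^\ell\|_{L^2}$, and then carries out the nested sums term by term (splitting $t_j\le(t_j-t_\ell)+t_\ell$ only at the outermost layer). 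Your route collapses this into a single convolution against the kernel $\|(I+\tau^\alpha A_h)\bar\partial_\tau E_{h,\tau}^{m}\|\le c(t_{m+1}^{\alpha-2}+\tau^\alpha t_{m+1}^{-2})$, which is cleaner: the two kernel pieces, both with exponent below $-1$, make the subsequent weight-splitting and application of Lemma~\ref{lem:basic-est1} systematic, and the logarithm and the single $t_n$ emerge exactly where you say (the $t_{j-\ell+1}^{-1}$ case for $\ell_n$, one crude bound $t_\ell\le t_n$ for the prefactor). The paper's approach has the modest advantage of reusing the estimate \eqref{eqn:bdd-aj}--\eqref{eqn:bdd-IhAhsigma} already built for Lemma~\ref{lem:disc-stab-2}, but at the cost of an extra layer of summation and inverse-inequality steps; your direct kernel estimate avoids that.
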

\begin{proof}
Let $a^\ell=|\bar\partial_\tau^\alpha e^\ell|$, and $I_h=(I+\tau^\alpha A_h)^{-\frac12}$.
The proof of Lemma \ref{lem:disc-stab-2} gives
\begin{align}\label{eqn:bdd-aj}
  |a^j| \leq \|I_h\sigma^j\|_{L^2(\Omega)} + \|I_hA_he^j\|_{L^2(\Omega)} + \tau^\alpha\|I_hA_h\bar\partial_\tau^\alpha e^j\|_{L^2(\Omega)}.
\end{align}
By the solution representation $e^j=\sum_{\ell=1}^jE_{h,\tau}^{j-\ell}\sigma^\ell$ and Lemma \ref{lem:stab-sol-op},
\begin{equation}\label{eqn:bdd-IhAhsigma}
  \|I_hA_he^j\|_{L^2(\Omega)}\leq c\tau\sum_{\ell=1}^j(t_{j+1}-t_{\ell})^{-1}\|I_h\sigma^\ell\|_{L^2(\Omega)}.
\end{equation}
Now by the identity $\bar\partial_\tau = \bar\partial_\tau^{1-\alpha}\bar\partial_\tau^\alpha$, since $e^0=0$, we have
\begin{align*}
  {\rm LHS} \leq & \tau \sum_{j=1}^n t_j\sum_{\ell=1}^j|b_{j-\ell}^{(1-\alpha)}|a^\ell + \tau \sum_{j=1}^n (t_{n+1}-t_j)^{\frac\alpha2-1}t_j^2\sum_{\ell=1}^j|b_{j-\ell}^{(1-\alpha)}| a^\ell:= {\rm I}_1 + {\rm I}_2.
\end{align*}
It suffices to bound the two terms ${\rm I}_1$ and ${\rm I}_2$ separately. For the first term ${\rm I}_1$,
Lemmas \ref{lem:bdd-b} and \ref{lem:basic-est1} give
\begin{align*}
  {\rm I}_1 &\leq ct_n\tau  \sum_{\ell=1}^na^\ell \sum_{j=\ell}^n (j+1-\ell)^{\alpha-2}
           \leq c\tau t_n \sum_{j=1}^n a^j.
\end{align*}
Meanwhile, the following inverse inequality is direct from Lemmas \ref{lem:bdd-b} and \ref{lem:basic-est1}:
\begin{align*}
   \tau^{\alpha}\sum_{j=1}^n\|I_hA_h\bar\partial_\tau^\alpha e^j\|_{L^2(\Omega)}
   \leq & c\sum_{j=1}^n\sum_{\ell=1}^j(j+1-\ell)^{-\alpha-1}\|I_hA_he^\ell\|_{L^2(\Omega)}\\
   &\leq c\sum_{j=1}^n \|I_hA_he^j\|_{L^2(\Omega)}.
\end{align*}
The last two estimates, \eqref{eqn:bdd-aj}--\eqref{eqn:bdd-IhAhsigma} and Lemma \ref{lem:basic-est1} imply
\begin{align*}
  {\rm I}_1 &\leq ct_n\tau \sum_{j=1}^n (\|I_h\sigma^j\|_{L^2(\Omega)} + \|I_hA_he^j\|_{L^2(\Omega)})\\
            & \leq ct_n\tau \sum_{j=1}^n \|I_h\sigma^j\|_{L^2(\Omega)} + ct_n\tau^2\sum_{j=1}^n\sum_{\ell=1}^j(t_{j+1}-t_\ell)^{-1}\|I_h\sigma^\ell\|_{L^2(\Omega)}\\
            & \leq c\ell_n t_n\tau \sum_{j=1}^n \|I_h\sigma^j\|_{L^2(\Omega)}.
\end{align*}
Next, we bound the term ${\rm I}_2$. By \eqref{eqn:bdd-aj}, the inner sum of the term ${\rm I}_2$ can be bounded by
\begin{align*}
   \sum_{\ell=1}^j|b_{j-\ell}^{(1-\alpha)}| a^\ell
  &\leq  \sum_{\ell=1}^j|b_{j-\ell}^{(1-\alpha)}|\big(\|I_h\sigma^\ell\|_{L^2(\Omega)}\\
     &\quad+\|I_hA_he^\ell\|_{L^2(\Omega)} +\tau^\alpha\|I_hA_h\bar\partial_\tau^\alpha e^\ell\|_{L^2(\Omega)}\big).
\end{align*}
Lemma \ref{lem:bdd-b}, changing the summation order and Lemma \ref{lem:basic-est1} imply the following inverse inequality (upon relabeling):
\begin{align*}
  & \tau^\alpha\sum_{\ell=1}^j|b_{j-\ell}^{(1-\alpha)}|\|I_hA_h\bar\partial_\tau^\alpha e^\ell\|_{L^2(\Omega)}\\
 \leq & c \sum_{\ell=1}^j(j+1-\ell)^{\alpha-2}\sum_{i=1}^{\ell}(\ell+1-i)^{-\alpha-1}\|I_hA_he^i\|_{L^2(\Omega)}\\
 \leq & c\sum_{i=1}^j (j+1-i)^{-\gamma^*}\|I_hA_he^i\|_{L^2(\Omega)},
\end{align*}
with $\gamma^*=\min(2-\alpha,1+\alpha)$. The last two estimates and \eqref{eqn:bdd-IhAhsigma} yield
\begin{align*}
   \sum_{\ell=1}^j|b_{j-\ell}^{(1-\alpha)}| a^\ell&\leq  c\sum_{\ell=1}^j(j+1-\ell)^{-\gamma^*}\big(\|I_h\sigma^\ell\|_{L^2(\Omega)}+\|I_hA_he^\ell\|_{L^2(\Omega)}\big)\\
     &\leq c\sum_{\ell=1}^j(j+1-\ell)^{-\gamma^*}\|I_h\sigma^\ell\|_{L^2(\Omega)}\\
      &\quad +c\sum_{\ell=1}^j(j+1-\ell)^{-\gamma^*}\sum_{k=1}^\ell(\ell+1-i)^{-1}\|I_h\sigma^k\|_{L^2(\Omega)}.
\end{align*}
Now by changing the summation order and using Lemma \ref{lem:basic-est1}, we deduce
\begin{align*}
  \sum_{\ell=1}^j|b_{j-\ell}^{(1-\alpha)}| a^\ell & \leq c\sum_{\ell=1}^j(j+1-\ell)^{-1}\|I_h\sigma^\ell\|_{L^2(\Omega)}.
\end{align*}
Consequently, with the splitting $t_j \leq (t_{j}-t_l) + t_\ell$,
\begin{align*}
  {\rm I}_2 & \leq c\tau \sum_{j=1}^n (t_{n+1}-t_j)^{\frac\alpha2-1}t_j^2
  \sum_{\ell=1}^j(j+1-\ell)^{-1}\|I_h\sigma^\ell\|_{L^2(\Omega)} \\
  & \leq  c t_n\tau\sum_{\ell=1}^n\|I_h\sigma^\ell\|_{L^2(\Omega)}
  \sum_{j=\ell}^n (t_{n+1}-t_j)^{\frac\alpha2-1}(t_j-t_\ell)(j+1-\ell)^{-1}\\
   &\quad +  ct_n\tau\sum_{\ell=1}^n\|I_h\sigma^\ell\|_{L^2(\Omega)}t_\ell
  \sum_{j=\ell}^n(t_{n+1}-t_j)^{\frac\alpha2-1}(j+1-\ell)^{-1}\\
  & \leq  c t_n\tau\sum_{\ell=1}^n(t_{n+1}-t_\ell)^{\frac{\alpha}{2}}\|I_h\sigma^\ell\|_{L^2(\Omega)}\\
  &\quad  +  ct_n\ell_n\tau\sum_{\ell=1}^nt_\ell(t_{n+1}-t_\ell)^{\frac\alpha2-1}\|I_h\sigma^\ell\|_{L^2(\Omega)}.
\end{align*}
Now relabeling and collecting the terms yield the desired assertion.\qed

\end{proof}

Next, we give a weighted estimate due to the local truncation error $\omega^k$.
\begin{lemma}\label{lem:bdd-omega}
Let $e^n\in X_h$ satisfy $e^0=0$ and
\begin{equation*}
   \bar\partial_\tau^\alpha e^n + A_h e^n = \omega^n,\quad n=1,\ldots,N,
\end{equation*}
where $\omega^n$, $n=1,\ldots,N$, are defined in \eqref{eqn:omega-eta}. Then with $\ell_n=\ln(1+t_n/\tau)$, there holds
\begin{align*}
  & \tau^{2-\alpha} \sum_{j=1}^n t_j |\bar \partial_\tau e^j| + \tau^{2-\alpha} \sum_{j=1}^n (t_{n+1}-t_j)^{\frac\alpha2-1}t_j^2 |\bar \partial_\tau e^j| \leq c\ell_n^2\tau^{2-\alpha} \|v\|_{L^2(\Omega)}.
\end{align*}
\end{lemma}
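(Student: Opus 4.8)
The plan is to apply the general weighted estimate of Lemma \ref{lem:bdd-eta} with $\sigma^n=\omega^n$, which immediately reduces the task to controlling the two data-dependent sums
\[
\tau\sum_{j=1}^n\|I_h\omega^j\|_{L^2(\Omega)}\quad\text{and}\quad \tau\sum_{j=1}^n t_j(t_{n+1}-t_j)^{\frac\alpha2-1}\|I_h\omega^j\|_{L^2(\Omega)},
\]
where $I_h=(I+\tau^\alpha A_h)^{-\frac12}$ as before. Indeed, Lemma \ref{lem:bdd-eta} gives
\[
\tau^{2-\alpha}\sum_{j=1}^n\big(t_j+(t_{n+1}-t_j)^{\frac\alpha2-1}t_j^2\big)|\bar\partial_\tau e^j|
\le c\ell_n t_n\Big(\tau\sum_{j=1}^n\|I_h\omega^j\|_{L^2(\Omega)}+\tau\sum_{j=1}^n t_j(t_{n+1}-t_j)^{\frac\alpha2-1}\|I_h\omega^j\|_{L^2(\Omega)}\Big),
\]
so it remains to show each of these two sums is bounded by $c\ell_n\tau^{1-\alpha}t_n^{-1}\|v\|_{L^2(\Omega)}$ (possibly picking up one more factor of $\ell_n$, which is consistent with the target $\ell_n^2\tau^{2-\alpha}\|v\|_{L^2(\Omega)}$ after the prefactor $\ell_n t_n$ is absorbed).

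The core estimate needed on the truncation error is a bound on $\|I_h\omega^j\|_{L^2(\Omega)}$. Recall $\omega^n=-(\bar\partial_\tau^\alpha-\partial_t^\alpha)(u_h(t_n)-v_h)=-(\bar\partial_\tau^\alpha-\partial_t^\alpha)y_h(t_n)$ with $v_h=P_hv$. By the definition of $I_h$ and the spectral calculus, $\|I_h\varphi\|_{L^2(\Omega)}\le \|A_h^{-\beta}\varphi\|_{L^2(\Omega)}^{\beta'}\cdots$ — more precisely, $\|I_h\varphi\|_{L^2(\Omega)}\le \min(1,\tau^{-\frac{\alpha}{2}}\|A_h^{-\frac12}\varphi\|/\|\varphi\|)\|\varphi\|$, so one uses Lemma \ref{lem:tal2}(ii) with $\beta=0$ to get $\|\omega^j\|_{L^2(\Omega)}\le c\tau t_j^{-1-\alpha}\|v\|_{L^2(\Omega)}$, and with $\beta=\frac12$ to get $\|A_h^{-\frac12}\omega^j\|_{L^2(\Omega)}\le c\tau t_j^{-1-\frac\alpha2}\|v\|_{L^2(\Omega)}$. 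Interpolating (or simply using $\|I_h\varphi\|\le \tau^{-\frac\alpha2}\|A_h^{-\frac12}\varphi\|$ when that is smaller, and $\|I_h\varphi\|\le\|\varphi\|$ otherwise), one obtains a bound of the form $\|I_h\omega^j\|_{L^2(\Omega)}\le c\tau t_j^{-1-\frac\alpha2}\cdot\tau^{-\frac\alpha2}\|v\|_{L^2(\Omega)}=c\tau^{1-\frac\alpha2}t_j^{-1-\frac\alpha2}\|v\|_{L^2(\Omega)}$; near $t_j=\tau$ this is consistent with the plain bound $\|\omega^j\|\le c\tau^{-\alpha}\|v\|$. Then both sums are handled by the basic discrete-sum estimates of Lemma \ref{lem:basic-est1}: $\tau\sum_{j=1}^n t_j^{-1-\frac\alpha2}$ is dominated by its first term $\sim\tau^{-\frac\alpha2}$ (up to constants), and $\tau\sum_{j=1}^n t_j(t_{n+1}-t_j)^{\frac\alpha2-1}t_j^{-1-\frac\alpha2}=\tau\sum_{j=1}^n t_j^{-\frac\alpha2}(t_{n+1}-t_j)^{\frac\alpha2-1}$, a Beta-type convolution whose bound contributes the logarithmic factor $\ell_n$ together with the expected power of $t_n$.

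The main obstacle is the bookkeeping of the weights and the singular endpoint $j=1$: the truncation error $\omega^j$ is only $O(\tau t_j^{-1-\alpha}\|v\|)$ away from $t=0$ but degenerates to $O(\tau^{-\alpha}\|v\|)$ at $t_j=\tau$, and the norm $I_h$ interpolates between $L^2$ and $\dot H^{-1}$ in a way that must be tracked carefully so as not to lose more than the two allowed factors of $\ell_n$. The cleanest route is to carry the bound $\|I_h\omega^j\|_{L^2(\Omega)}\le c\tau\min(t_j^{-1-\alpha},\tau^{-\frac\alpha2}t_j^{-1-\frac\alpha2})\|v\|_{L^2(\Omega)}$ through Lemma \ref{lem:basic-est1} term by term; once the two convolution sums are estimated (each producing at most one $\ell_n$ and the correct negative power of $t_n$ to cancel the $t_n$ prefactor from Lemma \ref{lem:bdd-eta}), relabeling and collecting gives the stated bound $c\ell_n^2\tau^{2-\alpha}\|v\|_{L^2(\Omega)}$. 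I would also double-check that $e^0=0$ is used exactly where Lemma \ref{lem:bdd-eta} requires it, since its proof relied on $\bar\partial_\tau=\bar\partial_\tau^{1-\alpha}\bar\partial_\tau^\alpha$ acting on a sequence vanishing at $0$.
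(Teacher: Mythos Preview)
Your proposal contains a genuine gap: routing the estimate through Lemma~\ref{lem:bdd-eta} is too lossy for the $\omega$-term. After applying Lemma~\ref{lem:bdd-eta} with $\sigma^j=\omega^j$, you need
\[
\tau\sum_{j=1}^n\big(1+t_j(t_{n+1}-t_j)^{\frac\alpha2-1}\big)\|I_h\omega^j\|_{L^2(\Omega)}
\ \le\ c\,\ell_n\,\tau^{2-\alpha}t_n^{-1}\|v\|_{L^2(\Omega)}
\]
(your stated target $\tau^{1-\alpha}t_n^{-1}$ is already off by a factor $\tau$). But this bound is unattainable with the available estimates on $\omega^j$. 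The sharpest bound you have is $\|I_h\omega^j\|_{L^2(\Omega)}\le c\tau^{1-\frac\alpha2}t_j^{-1-\frac\alpha2}\|v\|_{L^2(\Omega)}$, which gives for the first sum
\[
\tau\sum_{j=1}^n\|I_h\omega^j\|_{L^2(\Omega)}
\le c\tau^{2-\frac\alpha2}\sum_{j=1}^n t_j^{-1-\frac\alpha2}\|v\|_{L^2(\Omega)}
\le c\tau^{1-\alpha}\|v\|_{L^2(\Omega)},
\]
larger than the required $\tau^{2-\alpha}t_n^{-1}$ by a factor $t_n/\tau=n$ (up to logs). Already the single term $j=1$, where $\|I_h\omega^1\|\sim\tau^{-\alpha}\|v\|$, contributes $\tau^{1-\alpha}\|v\|$ and cannot be beaten. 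The extra factor $t_n$ in Lemma~\ref{lem:bdd-eta} is harmless for the $\eta$-term because $\eta^j$ is bootstrapped against $e^j$ itself, but for $\omega^j$ it is a pure loss.

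The paper therefore does \emph{not} invoke Lemma~\ref{lem:bdd-eta} here. Instead it writes
\[
\bar\partial_\tau e^j=\tau\sum_{k=1}^j E_{h,\tau}^{j-k}\bar\partial_\tau\omega^k
\]
and works with $\bar\partial_\tau\omega^k$ directly, using Corollary~\ref{cor:tal3} (which gains an extra $t_k^{-1}$, i.e., effectively a factor $\tau$ at small $k$). For each $j$ the sum over $k$ is split dyadically at $m_j=[j/2]$; on $k\le m_j$ a summation-by-parts moves $\bar\partial_\tau$ back onto $E_{h,\tau}^{j-k}$ (Lemma~\ref{lem:stab-sol-op}), while on $k>m_j$ the bound on $\bar\partial_\tau\omega^k$ is used as is. This combination of Corollary~\ref{cor:tal3} and the dyadic/Abel-summation argument is precisely what recovers the missing power of $\tau$ that your approach loses.
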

\begin{proof}
By applying the operator $\bar\partial_\tau$ to both sides of the defining equation for $e^n$
and the associativity of CQ, we obtain
\begin{equation*}
  \bar \partial_\tau e^n = \tau \sum_{k=1}^n E_{h,\tau}^{n-k} \bar \partial_\tau \omega^k.
\end{equation*}
Let $w_{j,n}=t_j+(t_{n+1}-t_j)^{\frac\alpha2-1}t_j^2$ be the weight. We split
LHS into two parts:
\begin{equation*}
  {\rm LHS} = \tau^{2-\alpha}w_{1,n} |\bar \partial_\tau e^1| + {\rm LHS}',\quad \mbox{with }{\rm LHS}' = \tau^{2-\alpha}\sum_{j=2}^nw_{j,n}|\bar \partial_\tau e^j|.
\end{equation*}
Since $e^0=0$, by Lemmas \ref{lem:tal2} and \ref{lem:stab-sol-op},
\begin{align*}
   \tau^{2-\alpha}w_{1,n} |\bar \partial_\tau e^1| & \leq \tau^{2-\alpha}w_{1,n}\|\bar \partial_\tau e^1\|_{L^2(\Omega)}
  + \tau^{2}w_{1,n}\| A_h\bar \partial_\tau e^1\|_{L^2(\Omega)}\\
     & = \tau^{2-\alpha}w_{1,n}\|E_{h,\tau}^{0}\omega^1\|_{L^2(\Omega)}
  + \tau^{2}w_{1,n}\| A_h E_{h,\tau}^0\omega^1\|_{L^2(\Omega)}\\
    & \leq c\tau^{2-\alpha}\|v\|_{L^2(\Omega)}.
\end{align*}
Thus it suffices to bound the sum ${\rm LHS}'$. Similarly,
\begin{align*}
 {\rm LHS}' &\le \tau^{2-\alpha} \sum_{j=2}^nw_{j,n}\|\bar \partial_\tau e^j\|_{L^2(\Omega)}
  + \tau^{2} \sum_{j=2}^nw_{j,n}\| A_h\bar \partial_\tau e^j\|_{L^2(\Omega)}
  :={\rm I}+{\rm II}.
\end{align*}
For the term ${\rm I}$, we further split it into two terms (with $m_j=[{j/2}]$, where $[\cdot]$ denotes
taking the integral part of a real number):
\begin{align*}
  {\rm I} &\leq \tau^{3-\alpha} \sum_{j=2}^n w_{j,n} \|\sum_{k=1}^{m_j} E_{h,\tau}^{j-k}\bar \partial_\tau \omega^k\|_{L^2(\Omega)} \\
   &\quad + \tau^{3-\alpha} \sum_{j=2}^n   w_{j,n} \|\sum_{k=m_j+1}^j E_{h,\tau}^{j-k}\bar \partial_\tau \omega^k\|_{L^2(\Omega)}
  : = {\rm I}_1 + {\rm I}_2.
\end{align*}
Then by the summation by parts formula
\begin{equation}\label{eqn:disc-int-part}
\sum_{k=0}^jf_k(g_{k+1}-g_{k}) + \sum_{k=1}^{j}g_k(f_k-f_{k-1}) = f_jg_{j+1}-f_0g_0,
\end{equation}
since $\omega^0=0$, there holds
\begin{equation*}
  \sum_{k=1}^{m_j} E_{h,\tau}^{j-k}\bar \partial_\tau \omega^k = \sum_{k=1}^{m_j-1}\big(\bar\partial_\tau E_{h,\tau}^{j-k}\big)\omega^k + \tau^{-1}E_{h,\tau}^{j-m_j}\omega^{m_j}.
\end{equation*}
This, the triangle inequality, and Lemma \ref{lem:stab-sol-op}, we have
\begin{align*}
{\rm I}_1 =&\tau^{3-\alpha} \sum_{j=2}^n w_{j,n}  \|\sum_{k=1}^{m_j-1} \big(\bar \partial_\tau E_{h,\tau}^{j-k}\big)\omega^k + \tau^{-1} {E_{h,\tau}^{j-m_j}\omega^{m_j}}\|_{L^2(\Omega)}\\
\le & \tau^{3-\alpha} \sum_{j=2}^n w_{j,n}  \sum_{k=1}^{m_j-1} \|\big(\bar \partial_\tau E_{h,\tau}^{j-k}\big)\omega^k\|_{L^2(\Omega)}
 +  \tau^{2-\alpha} \sum_{j=2}^n w_{j,n}   \| {E_{h,\tau}^{j-m_j}\omega^{m_j}}\|_{L^2(\Omega)}\\
\le &c \tau^{3-\alpha} \sum_{j=2}^n w_{j,n}  \sum_{k=1}^{m_j-1} (t_{j+1}-t_k)^{-2} \| A_h^{-1} \omega^k\|_{L^2(\Omega)}\\
 & + c \tau^{2-\alpha} \sum_{j=2}^nw_{j,n}(t_{j+1}-t_{m_j})^{-1} \| A_h^{-1} \omega^{m_j}\|_{L^2(\Omega)}.
\end{align*}
By Lemma \ref{lem:tal2}, $\|A_h^{-1}\omega^j\|_{L^2(\Omega)}\leq c\tau t_j^{-1}\|v\|_{L^2(\Omega)}$, and upon
substitution, Lemma \ref{lem:basic-est1} implies
\begin{align*}
 {\rm I}_1 &\le c \tau^{4-\alpha} \sum_{j=2}^n t_j^{-2}w_{j,n}  \sum_{k=1}^{m_j-1}t_k^{-1}\| v\|_{L^2(\Omega)}
 + c \tau^{3-\alpha} \sum_{j=2}^n w_{j,n}t_j^{-2} \|v\|_{L^2(\Omega)}\\
  &\le c\tau^{2-\alpha} \ell_n^2 \| v\|_{L^2(\Omega)}.
\end{align*}
Similarly, by Lemma \ref{lem:stab-sol-op}, Corollary \ref{cor:tal3} and Lemma
\ref{lem:basic-est1}, we deduce
\begin{align*}
{\rm I}_2
 \le& c\tau^{3-\alpha} \sum_{j=2}^nw_{j,n}\sum_{k=m_j+1}^j \|E_{h,\tau}^{j-k} \bar \partial_\tau \omega^k\|_{L^2(\Omega)}\\
\le& c\tau^{3-\alpha} \sum_{j=2}^nw_{j,n}\sum_{k=m_j+1}^j (t_{j+1}-t_k)^{-1} \|A_h^{-1}\bar \partial_\tau \omega^k\|_{L^2(\Omega)}\\
\le &c\tau^{4-\alpha} \sum_{j=2}^nw_{j,n}\sum_{k=m_j+1}^j (t_{j+1}-t_k)^{-1} t_k^{-2}\| v\|_{L^2(\Omega)}\\
\le &c\tau^{2-\alpha}\ell_n^2  \| v \|_{L^2(\Omega)}.
\end{align*}
Thus, ${\rm I} \le c\tau^{2-\alpha} \ell_n^2 \| v \|_{L^2(\Omega)}$.
In the same manner, we further split ${\rm II}$ into two terms
\begin{align*}
 {\rm II} \leq & \tau^{3} \sum_{j=2}^nw_{j,n} \|\sum_{k=1}^{m_j} E_{h,\tau}^{j-k} A_h\bar \partial_\tau \omega^k\|_{L^2(\Omega)} \\
   &+ \tau^{3} \sum_{j=2}^nw_{j,n}\|\sum_{k=m_j+1}^j E_{h,\tau}^{j-k} A_h\bar \partial_\tau \omega^k\|_{L^2(\Omega)}:={\rm II}_1+{\rm II}_2.
\end{align*}
For the term ${\rm II}_1$, we apply summation by
parts formula \eqref{eqn:disc-int-part}, triangle inequality, Lemmas \ref{lem:stab-sol-op}, \ref{lem:tal2} and
\ref{lem:basic-est1} to obtain
\begin{align*}
{\rm II}_1 =&\tau^{3} \sum_{j=2}^nw_{j,n}  \|\sum_{k=1}^{m_j-1}\big(\bar\partial_\tau E_{h,\tau}^{j-k}\big) A_h \omega^k + \tau^{-1}
{E_{\tau}^{j-m_j} A_h\omega^{m_j}}\|_{L^2(\Omega)}\\
\le & \tau^{3} \sum_{j=2}^nw_{j,n}  \sum_{k=1}^{m_j-1} \|\big(\bar\partial_\tau E_{h,\tau}^{j-k}\big)A_h \omega^k\|_{L^2(\Omega)}
 +  \tau^{2} \sum_{j=2}^nw_{j,n}   \| {E_{h,\tau}^{j-m_j}A_h\omega^{m_j}}\|_{L^2(\Omega)}\\
\le &c \tau^{3} \sum_{j=2}^nw_{j,n}  \sum_{k=1}^{m_j-1} (t_{j+1}-t_k)^{-2} \|  \omega^k\|_{L^2(\Omega)}\\
 &+ c \tau^{2} \sum_{j=2}^nw_{j,n}(t_{j+1}-t_{m_j})^{-1}  \| \omega^{m_j}\|_{L^2(\Omega)}\\
 \le &c \tau^{4} \sum_{j=2}^n t_j^{-2}w_{j,n}  \sum_{k=1}^{m_j-1} t_k^{-1-\alpha}\| v\|_{L^2(\Omega)}
 + c \tau^{3} \sum_{j=2}^n{w_{j,n}t_j^{-2-\alpha}}\|v\|_{L^2(\Omega)}\\
 \le& c\tau^{2-\alpha} \ell_n^2 \| v\|_{L^2(\Omega)},
\end{align*}
and likewise by Lemma \ref{lem:stab-sol-op} and Corollary \ref{cor:tal3},
\begin{align*}
{\rm II}_2
 &\le \tau^{3} \sum_{j=2}^n w_{j,n}\sum_{k=m_j+1}^j \|E_{\tau}^{j-k}A_h \bar \partial_\tau \omega^k\|_{L^2(\Omega)}\\
&\le c\tau^{3}\sum_{j=2}^n w_{j,n}\sum_{k=m_j+1}^j (t_{j+1}-t_k)^{-1} \| \bar \partial_\tau \omega^k\|_{L^2(\Omega)}\\
&\le c\tau^{4} \sum_{j=2}^n w_{j,n}\sum_{k=m_j+1}^j (t_{j+1}-t_k)^{-1} t_k^{-2-\alpha} \| v\|_{L^2(\Omega)}\\
&\le c\tau^{4-\alpha} \sum_{j=2}^n w_{j,n}t_j^{-2}\sum_{k=m_j+1}^j (t_{j+1}-t_k)^{-1} \| v\|_{L^2(\Omega)}\\
&\le c\tau^{2-\alpha}\ell_n^2  \| v \|_{L^2(\Omega)}.
\end{align*}
Thus, ${\rm II} \le c\tau^{2-\alpha} \ell_n^2 \| v \|_{L^2(\Omega)}$,
and the desired assertion follows. \qed
\end{proof}

Now we can state the error estimate for \eqref{eqn:fully2} with $v\in L^2(\Omega)$.
\begin{theorem}\label{thm:err-nonsmooth-ini}
Let $v\in L^2(\Omega)$ and assumption \eqref{eqn:Bhtau} hold. Let $U_h^n \equiv U_h^{n,M_n}$
be the solution to \eqref{eqn:Uhn0}--\eqref{eqn:fully2} with $f=0$ and $v_h=P_hv$, and let
$U_h^n= \overline U_h^n$ for $n=1,2$. Then with $\ell_n=\ln(1+t_n/\tau)$,
there exists a $\delta>0$ such that
\begin{equation*}
\|  U_h^n - u_h(t_n)  \|_{L^2(\Omega)} \le c  \tau t_n^{ -1} {\color{blue} \ell_n} \|v\|_{L^2(\Omega)},\quad \text{if}~c_0\kappa^{M_n} \le \delta \min(t_n,1)\ell_n^{-1}.
\end{equation*}
\end{theorem}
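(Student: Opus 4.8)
The plan is to keep the three-step architecture of the proof of Theorem~\ref{thm:err-smooth-ini}, but to replace the unweighted $\ell^q$ stability tools by the $t_n$-weighted ones of Lemmas~\ref{lem:stab2}, \ref{lem:bdd-eta} and \ref{lem:bdd-omega}, whose built-in powers of $t_j$ are exactly what compensates the $t_n^{-1}$ singularity of the error. For $n=1,2$ there is nothing to prove (then $U_h^n=\overline U_h^n$ and $\|e^n\|_{L^2(\Omega)}\le c\|v\|_{L^2(\Omega)}$), so assume $n>2$. As in \eqref{eqn:err-smooth-data}--\eqref{eqn:barUhn}, $e^n=U_h^n-u_h(t_n)$ satisfies $e^0=0$ and $\bar\partial_\tau^\alpha e^n+A_he^n=\sigma^n$ with $\sigma^n=\omega^n+(I+\tau^\alpha A_h)\eta^n$, $\omega^n=-(\bar\partial_\tau^\alpha-\partial_t^\alpha)(u_h(t_n)-v_h)$ and $\eta^n=\tau^{-\alpha}(U_h^n-\overline U_h^n)$, and $\eta^1=\eta^2=0$. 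By linearity split $e^n=e^n_\omega+e^n_\eta$ according to the two source terms. \textbf{Step 1 (reduce to the iteration defect):} apply Lemma~\ref{lem:stab2} to $e^n_\omega$; Lemma~\ref{lem:tal2}(ii) with $\beta=1$ and $\beta=\tfrac12$ gives $\|A_h^{-1}\omega^j\|_{L^2(\Omega)}\le c\tau t_j^{-1}\|v\|_{L^2(\Omega)}$ and $\|A_h^{-1/2}\omega^j\|_{L^2(\Omega)}\le c\tau t_j^{-1-\alpha/2}\|v\|_{L^2(\Omega)}$, and together with Lemma~\ref{lem:basic-est1} this yields directly $t_n\|e^n_\omega\|_{L^2(\Omega)}\le c\tau\ell_n\|v\|_{L^2(\Omega)}$, already of the asserted order. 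Applying Lemma~\ref{lem:stab2} to $e^n_\eta$ and using $\|A_h^{-1}(I+\tau^\alpha A_h)\eta^j\|_{L^2(\Omega)}+\|A_h^{-1/2}(I+\tau^\alpha A_h)\eta^j\|_{L^2(\Omega)}\le c|\eta^j|$ (this is \eqref{eqn:eta} for the second term, and the first follows likewise from the uniform boundedness of $A_h^{-1}$ on $L^2(\Omega)$), it remains to bound
\begin{equation*}
 S_n:=\tau\sum_{j=1}^n\big(1+(t_{n+1}-t_j)^{\frac\alpha2-1}t_j\big)|\eta^j|,\qquad\text{since }\ t_n\|e^n_\eta\|_{L^2(\Omega)}\le cS_n .
\end{equation*}

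\textbf{Step 2 (bound the defect $|\eta^j|$):} pick $M_n$ with $c_0\kappa^{M_n}\le\delta\min(t_n,1)\ell_n^{-1}$ and set $\epsilon=\tfrac{\delta}{1-\delta}\ell_n^{-1}$, so that $c_0\kappa^{M_n}/(1-c_0\kappa^{M_n})\le\epsilon t_n$. As in the proof of Theorem~\ref{thm:err-smooth-ini}, \eqref{eqn:Bhtau} and the triangle inequality give $|U_h^n-\overline U_h^n|\le\epsilon t_n|U_h^{n,0}-U_h^n|$; combining this with the extrapolation \eqref{eqn:Uhn0}, which yields $U_h^n-U_h^{n,0}=\tau(\bar\partial_\tau e^n-\bar\partial_\tau e^{n-1})+\tau^2\bar\partial_\tau^2 u_h(t_n)$, and with Lemma~\ref{lem:est-uh-weight} at $\beta=0$, i.e.\ $|\bar\partial_\tau^2 u_h(t_j)|\le ct_j^{-2}\|v\|_{L^2(\Omega)}$ for $j>2$, we obtain for $j>2$
\begin{equation*}
 |\eta^j|\le\epsilon\tau^{1-\alpha}t_j\big(|\bar\partial_\tau e^j|+|\bar\partial_\tau e^{j-1}|\big)+c\epsilon\tau^{2-\alpha}t_j^{-1}\|v\|_{L^2(\Omega)} .
\end{equation*}

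\textbf{Step 3 (close the estimate):} insert this bound into $S_n$; the key point is that multiplying the weight $1+(t_{n+1}-t_j)^{\frac\alpha2-1}t_j$ by the factor $t_j$ from Step~2 produces exactly the weight $w_{j,n}=t_j+(t_{n+1}-t_j)^{\frac\alpha2-1}t_j^2$ of Lemmas~\ref{lem:bdd-eta}--\ref{lem:bdd-omega}. The $\bar\partial_\tau^2u_h$ term contributes, by Lemma~\ref{lem:basic-est1}, at most $c\epsilon\tau^{2-\alpha}\ell_n\|v\|_{L^2(\Omega)}\le c\delta\tau\|v\|_{L^2(\Omega)}$. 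In $\epsilon\tau^{2-\alpha}\sum_jw_{j,n}(|\bar\partial_\tau e^j|+|\bar\partial_\tau e^{j-1}|)$ the index shift is harmless, since $\bar\partial_\tau e^0=0$ and the weights are slowly varying, $w_{j,n}\le c\,w_{j-1,n}$ for $2\le j\le n$; splitting $e^j=e^j_\omega+e^j_\eta$, Lemma~\ref{lem:bdd-omega} bounds the $e_\omega$-contribution by $c\epsilon\ell_n^2\tau^{2-\alpha}\|v\|_{L^2(\Omega)}\le c\delta\ell_n\tau\|v\|_{L^2(\Omega)}$, while Lemma~\ref{lem:bdd-eta} applied to $e_\eta$ (whose right-hand side is $(I+\tau^\alpha A_h)\eta^j$, so $\|(I+\tau^\alpha A_h)^{-1/2}\sigma^j\|_{L^2(\Omega)}=|\eta^j|$) bounds the $e_\eta$-contribution by $c\epsilon\ell_nt_nS_n=c\delta t_nS_n$. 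Hence
\begin{equation*}
 S_n\le c\ell_n\tau\|v\|_{L^2(\Omega)}+c\delta t_nS_n ,
\end{equation*}
and choosing $\delta$ small enough that $c\delta T\le\tfrac12$ absorbs the last term, giving $S_n\le c\ell_n\tau\|v\|_{L^2(\Omega)}$; dividing by $t_n$ and adding the Step~1 bound on $e^n_\omega$ completes the proof.

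The main obstacle is the self-referential term $c\delta t_nS_n$: in contrast to the smooth-data case, where the corresponding coefficient is a fixed small multiple of $\epsilon$, Lemma~\ref{lem:bdd-eta} unavoidably inserts an explicit factor $t_n$, so the absorption forces $\delta$, and hence the threshold in the hypothesis on $M_n$, to depend on $T$; this is the analytic source of the requirement of more iterations near $t=0$. A minor technical point is to confirm that the singular weights $w_{j,n}$ satisfy $w_{j,n}\le c\,w_{j-1,n}$ uniformly in $j\le n$, which is needed for the shifted sums involving $\bar\partial_\tau e^{j-1}$ and follows from elementary bounds on $t_j/t_{j-1}$ and $(n+1-j)/(n-j)$.
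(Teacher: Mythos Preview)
Your proposal is correct and follows essentially the same route as the paper's proof: the same error equation \eqref{eqn:err-smooth-data}--\eqref{eqn:barUhn}, the same weighted stability Lemma~\ref{lem:stab2} in Step~1, the same contraction-plus-extrapolation bound on $|\eta^j|$ in Step~2, and the same combination of Lemmas~\ref{lem:bdd-eta} and \ref{lem:bdd-omega} in Step~3 to close the self-referential inequality. The only organizational difference is that you split $e^n=e^n_\omega+e^n_\eta$ explicitly from the outset, whereas the paper applies Lemma~\ref{lem:stab2} to the full $e^n$ and separates the $\omega$- and $\eta$-contributions afterward; this is equivalent by linearity. Your remarks on the $T$-dependence of $\delta$ (forced by the factor $t_n$ from Lemma~\ref{lem:bdd-eta}) and on the comparability $w_{j,n}\le c\,w_{j-1,n}$ needed for the index shift are both valid and make explicit points the paper leaves implicit.
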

\begin{proof}
The proof employs \eqref{eqn:err-smooth-data}--\eqref{eqn:barUhn},
and the overall strategy is similar to that for Theorem \ref{thm:err-smooth-ini}. However, due
to lower solution regularity for $v\in L^2(\Omega)$, the requisite weighted estimates are different.
Below we sketch the main steps.

\noindent{\bf Step 1}: Bound $\|e^n\|_{L^2(\Omega)}$ by $|\eta^j|$s.
By \eqref{eqn:err-smooth-data} and Lemma \ref{lem:stab2},
\begin{align*}
  t_n \|  e^n  \|_{L^2(\Omega)}   \le&   c \tau \sum_{j=1}^n  \| A_h^{-1} \sigma^j \|_{L^2(\Omega)}
  + c \tau \sum_{j=1}^n (t_{n+1}-t_j)^{\frac\alpha2-1} t_j \| A_h^{-\frac12}\sigma^j  \|_{L^2(\Omega)}\\
    \leq &   c \Big(\tau \sum_{j=1}^n  \| A_h^{-1} \omega^j \|_{L^2(\Omega)} + \tau \sum_{j=1}^n (t_{n+1}-t_j)^{\frac\alpha2-1} t_j
     \| A_h^{-\frac12}\omega^j  \|_{L^2(\Omega)}\Big)\\
    & +   c\Big(\tau \sum_{j=1}^n  \| A_h^{-1}(I+\tau^\alpha A_h)\eta^j \|_{L^2(\Omega)}\\
    &+ \tau \sum_{j=1}^n (t_{n+1}-t_j)^{\frac\alpha2-1} t_j \| A_h^{-\frac12}(I+\tau^\alpha A_h)\eta^j  \|_{L^2(\Omega)}\Big): = {\rm I}+{\rm II}.
\end{align*}
For the term ${\rm I}$, Lemmas \ref{lem:tal2}(ii) and \ref{lem:basic-est1} lead to
\begin{equation*}
{\rm I} \le c\tau\ell_n \|v\|_{L^2(\Omega)}.
\end{equation*}
The estimate \eqref{eqn:eta} allows simplifying the term ${\rm II}$ to
\begin{align}\label{eqn:est-II-nonsmooth}
{\rm II} \leq c\tau\sum_{j=1}^nw_{j,n} |\eta^j|\quad \mbox{with } w_{j,n}=1+(t_{n+1}-t_j)^{\frac\alpha2-1}t_j.
\end{align}
The rest of the proof is to bound ${\rm II}$ under assumption \eqref{eqn:Bhtau}.

\noindent{\bf Step 2}: Bound the summand $|\eta^n|$.
Under assumption \eqref{eqn:Bhtau} and triangle inequality, there holds
\begin{equation*}
 |U_h^{n} - \overline{U}_h^n| \le c_0 \kappa^{M_n} \big( |U_h^{n,0} - U_h^{n}| + |U_h^{n} - \overline{U}_h^n | \big).
\end{equation*}
Next we choose $M_n$ such that $c_0\kappa^{M_n} \le \delta \min(t_n,1)\ell_n^{-1}$,
and let $\epsilon = \frac{\delta}{1-\delta} \ell_n^{-1}$. Then we have
\begin{equation*}
|U_h^{n} - \overline{U}_h^n| % \le \frac{c_0 \kappa^{M_n}}{1-c_0 \kappa^{M_n}}|U_h^{n,0} - U_h^{n} |
\le \epsilon t_n |U_h^{n,0} - U_h^{n} |,
\end{equation*}
and hence
\begin{equation*}
 |\eta^n| = \tau^{-\alpha} |U_h^{n} - \overline{U}_h^n|
 \le  \epsilon\tau^{-\alpha} t_n  |U_h^{n,0} - U_h^{n} |.
\end{equation*}
By the choice of $ U_h^{n,0}$ in \eqref{eqn:Uhn0}, $\eta^1=\eta^2=0$ and, for $n\geq 3$,
\begin{equation}\label{eqn:eta-nonsmooth}
  |\eta^n| \le c\epsilon \tau^{1-\alpha} t_n\big(|\bar \partial_\tau  e^n|+|\bar \partial_\tau  e^{n-1}| + \tau| \bar \partial_\tau^2u_h(t_n)|\big).
\end{equation}
Meanwhile, by Lemmas \ref{lem:est-uh-weight} and \ref{lem:basic-est1}, we have
\begin{equation}\label{eqn:est-uh}
    \tau^{3-\alpha} \sum_{j=3}^n t_jw_{j,n}|  \bar \partial_\tau^2 u_h(t_j)|
\le  c   \tau^{2-\alpha} \ell_n  \|v\|_{L^2(\Omega)}.
\end{equation}

\noindent{\bf Step 3}: Bound the term ${\rm II}$ explicitly.
It follows from \eqref{eqn:est-II-nonsmooth}--\eqref{eqn:est-uh} that
\begin{align*}
   {\rm II} \le  c\epsilon \tau^{2-\alpha} \ell_n \|v\|_{L^2(\Omega)}
    + c\epsilon \tau^{2-\alpha} \sum_{j=1}^n t_jw_{j,n}|\bar \partial_\tau e^j|.
\end{align*}
It follows from Lemmas \ref{lem:bdd-eta} and \ref{lem:bdd-omega}, invoked respectively for $\eta^j$ and $\omega^j$, that
\begin{align*}
   & \tau^{2-\alpha} \sum_{j=1}^n t_jw_{j,n}|\bar \partial_\tau e^j|
   \leq  c\ell_nt_n\tau\sum_{j=1}^nw_{j,n}|\eta^j| + c\tau^{2-\alpha}\ell_n^2\|v\|_{L^2(\Omega)}.
\end{align*}
The rest of the proof is identical with Theorem \ref{thm:err-smooth-ini}, and hence omitted. \qed
\end{proof}

\begin{remark}
The numerical solution $U_h^n$ by the time stepping scheme \eqref{eqn:fully}
satisfies \cite[Theorem 3.5]{JinLazarovZhou:SISC2016}
\begin{equation*}
  \|U_h^n-u_h(t_n)\| \leq \left\{\begin{array}{ll}
      c\tau t_n^{\alpha-1}\|Av\|_{L^2(\Omega)}, &\quad \mbox{if } v_h=R_hv,\\
      c\tau t_n^{-1} \|v\|_{L^2(\Omega)}, &\quad \mbox{if } v_h=P_hv.
      \end{array}\right.
\end{equation*}
The error estimates in Theorems \ref{thm:err-smooth-ini} and \ref{thm:err-nonsmooth-ini} for
\eqref{eqn:fully2} are comparable, up to a log factor $\ell_n$. However, the IIS \eqref{eqn:fully2}
does not require the exact solution of the resulting linear systems and thus can be more efficient.
\end{remark}

\section{Numerical experiments and discussions}\label{sec:numer}
Now we present numerical results to illustrate the theoretical results. The numerical experiments are performed
on the square $\Omega=(-1,1)^2$. In the computation, we first divide the interval $(-1,1)$ into $K$ equally
spaced subintervals of length $h=2/K$ so that the domain $\Omega=(-1,1)^2$ is divided into $K^2$ small squares,
and then obtain a uniform triangulation by connecting the diagonal of each small square. We divide the time
interval $[0,T]$ into a uniform grid with a time step size $\tau=T/N$. Since the semidsicrete solution $u_h$ is
not available in closed form, we compute a reference solution $u_h(t_n)$ by the corrected CQ generated by BDF3
\cite{JinLiZhou:2017} in time with $N=1000$ and $K=256$ in space. We compute the temporal error at $t_N=T$ by
\begin{equation*}
 e^N = \frac{\| U_h^N - u_h(t_N) \|_{L^2(\Omega)}}{\|   u_h(t_N) \|_{L^2(\Omega)}}.
\end{equation*}
In the IIS \eqref{eqn:fully2}, any iterative solver satisfying the contraction property \eqref{eqn:Bhtau} can
be employed. In this work, we employ the V-cycle multigrid method
with standard Jacobi or Gauss-Seidel smoothers to inexactly solve the linear systems, which is known to
satisfy \eqref{eqn:Bhtau} \cite[Theorem 11.4, p. 199]{Thomee:2006}. Multigrid type methods have been employed
in \cite{LinLuNgSun:2016,GasparRodrigo:2017}, but without error analysis for either smooth or
nonsmooth solutions. In the experiments, the spatial mesh size $h$ is fixed with $K=256$ so that the numerical
results focus on the temporal error.

\subsection{Example 1: smooth solutions}
First we consider problem \eqref{eqn:fde} with $A=-5\Delta$, $T=1$, $v=0$ and $f(x,t) = t^2(1+x_1)(1-x_1)(1+x_2)(1-x_2).$
The source term $f$ satisfies compatibility conditions: $f(0)=f'(0)=0$ and {$f\in C^2([0,T], D(A))$}.
Thus the solution $u$ satisfies the regularity assumption in Theorem \ref{thm:iter-smooth} (see Remark \ref{rmk:smooth-sol}),
and accordingly, the number $M_n$ of iterations may be taken to be uniform in time, which is sufficient to preserve the
desired first-order convergence.

\begin{table}[hbt!]
\centering
\caption{$L^2$ errors $e^N$ for Example 1 with $K=128$, point Jacobi smoother.}\label{tab:exp1}
{\setlength{\tabcolsep}{7pt}
	\begin{tabular}{|c|c|cccccc|}
		\hline
		$\alpha$  &$M_n\backslash N$ &$10$ &$20$ & $40$ & $80$ & $160$ & $320$   \\
		\hline
		     &   $1$      & 2.73e-3 & 5.46e-4 & 9.26e-5 & 4.41e-5 & 3.43e-5   & 2.16e-5\\
   & & &  2.32 & 2.56 &1.07 &0.36 &0.66\\
   \cline{2-8}
		 &   $2$      & 3.11e-4 & 2.37e-4 & 1.47e-4 & 8.83e-5 & 5.00e-5   & 2.55e-5\\
   & & &  0.39 & 0.69 &0.73 &0.82 &0.97\\
   \cline{2-8}
0.2 		     &   $3$     & 6.67e-4 & 3.35e-4 & 1.79e-4 & 9.61e-5 & 5.07e-5   & 2.57e-5\\
   & & &  0.99 & 0.91 &0.89 &0.92 &0.98\\
    \cline{2-8}
   		     &   $\infty$     & 8.31e-4 & 4.18e-4 & 2.09e-4 & 1.04e-4 & 5.24e-5   & 2.62e-5\\
   & & &  0.99 & 1.00 &1.00 &1.00 &1.00\\
    \hline
		     &   $1$      & 1.43e-3 & 2.06e-4 & 2.93e-4 & 2.10e-4 & 1.24e-4   & 6.64e-5\\
   & & &  2.79 & -0.51 &0.48 &0.76 &0.90\\
     \cline{2-8}
	 	 &   $2$      & 1.70e-3 & 9.41e-4 & 4.99e-4 & 2.66e-4 & 1.39e-4   & 6.98e-5\\
  & & &  0.85 & 0.92 &0.91 &0.94 &0.99\\
   \cline{2-8}
 0.5 		     &   $3$      & 2.07e-3 & 1.04e-3 & 5.31e-4 & 2.74e-4 & 1.39e-4  & 7.02e-5\\
   & & &  0.99 & 0.97 &0.95 &0.98 &0.99\\
      \cline{2-8}
		     &   $\infty$      & 2.23e-3 & 1.12e-3 & 5.63e-4 & 2.82e-4 & 1.41e-4  & 7.05e-5\\
   & & &  0.99 & 1.00 &1.00 &1.00 &1.00\\
	\hline
		     &   $1$      & 4.10e-4 & 8.79e-4 & 6.52e-4 & 3.94e-4 & 2.17e-4    & 1.12e-4\\
   & & &  -1.10 & 0.43 &0.73 &0.86 &0.96\\
     \cline{2-8}
		  &   $2$      & 3.13e-3 & 1.66e-3 & 8.58e-4 & 4.47e-4 & 2.26e-4     & 1.14e-4 \\
   & & &  0.92 & 0.95 &0.94 &0.98 &0.99\\
    \cline{2-8}
0.8		     &   $3$      & 3.49e-3 & 1.75e-3 & 8.85e-4 & 4.53e-4 & 2.28e-4     & 1.14e-4\\
   & & &  1.00 & 0.98 &0.97 &0.99 &1.00\\
    \cline{2-8}
   &   $\infty$      & 3.64e-3 & 1.83e-3 & 9.14e-4 & 4.58e-4 & 2.29e-4     & 1.14e-4\\
   & & &  0.99 & 1.00 &1.00 &1.00 &1.00\\
	\hline
   \end{tabular}}
\end{table}

\begin{table}[hbt!]
\centering
\caption{$L^2$ errors $e^N$ for Example 1 with $K=128$, Gauss-Seidel smoother.}\label{tab:exp1-GS}
{\setlength{\tabcolsep}{7pt}
	\begin{tabular}{|c|c|cccccc|}
		\hline
		$\alpha$  &$M_n\backslash N$ &$10$ &$20$ & $40$ & $80$ & $160$ & $320$   \\
		\hline
		     &   $1$      & 2.61e-3 & 4.46e-4 & 2.40e-5 & 5.20e-5 & 3.92e-5   & 2.29e-5\\
   & & &  2.55 & 4.22 &-1.12 &0.41 &0.77\\
   \cline{2-8}
		 &   $2$      & 3.97e-4 & 3.07e-4 & 1.82e-4 & 9.80e-5 & 5.08e-5   & 2.58e-5\\
   & & &  0.37 & 0.76 &0.89 &0.95 &0.98\\
   \cline{2-8}
0.2 		     &   $3$     & 7.58e-4 & 4.00e-4 & 2.05e-4 & 1.04e-4 & 5.21e-5   & 2.61e-5\\
   & & &  0.93 & 0.96 &0.98 &0.99 &1.00\\
        \cline{2-8}
   		     &   $\infty$     & 8.31e-4 & 4.18e-4 & 2.09e-4 & 1.04e-4 & 5.24e-5   & 2.62e-5\\
   & & &  0.99 & 1.00 &1.00 &1.00 &1.00\\
    \hline
    		     &   $1$      & 1.31e-3 & 2.75e-4 & 3.50e-4 & 2.30e-4 & 1.28e-4   & 6.75e-5\\
   & & &  2.26 & -0.35 &0.61 &0.84 &0.93\\
     \cline{2-8}
	 	 &   $2$      & 1.80e-3 & 1.02e-3 & 5.39e-4 & 2.76e-4 & 1.40e-4   & 7.03e-5\\
   & & &  0.82 & 0.92 &0.96 &0.98 &0.99\\
   \cline{2-8}
 0.5		     &   $3$      & 2.16e-3 & 1.11e-3 & 5.59e-4 & 2.81e-4 & 1.41e-4  & 7.05e-5\\
   & & &  0.99 & 0.97 &0.95 &0.98 &0.99\\
      \cline{2-8}
		     &   $\infty$      & 2.23e-3 & 1.12e-3 & 5.63e-4 & 2.82e-4 & 1.41e-4  & 7.05e-5\\
   & & &  0.99 & 1.00 &1.00 &1.00 &1.00\\
	\hline
		     &   $1$      & 4.04e-4 & 9.71e-4 & 7.12e-4 & 4.12e-4 & 2.19e-4    & 1.13e-4\\
   & & &  -1.27 & 0.45 &0.79 &0.91 &0.96\\
     \cline{2-8}
		  &   $2$      & 3.23e-3 & 1.74e-3 & 8.98e-4 & 4.55e-4 & 2.28e-4     & 1.14e-4 \\
   & & &  0.89 & 0.95 &0.98 &0.99 &1.00\\
    \cline{2-8}
0.8		     &   $3$      & 3.58e-3 & 1.81e-3 & 9.12e-4 & 4.57e-4 & 2.29e-4     & 1.14e-4\\
   & & &  0.98 & 0.99 &1.00 &1.00 &1.00\\
    \cline{2-8}
   &   $\infty$      & 3.64e-3 & 1.83e-3 & 9.14e-4 & 4.58e-4 & 2.29e-4     & 1.14e-4\\
   & & &  0.99 & 1.00 &1.00 &1.00 &1.00\\
	\hline
   \end{tabular}}
\end{table}

We present numerical results  for different values of the fractional order $\alpha$ and the number $M_n$ per iteration in
Tables \ref{tab:exp1} and \ref{tab:exp1-GS} obtained by the IIS \eqref{eqn:fully2} with point Jacobi and Gauss-Seidel
smoothers, respectively, where the choice $M_n=\infty$ corresponds to the direct solver for the linear
system at each time level. In each small block of the tables, the numbers under the errors denote the log
(with a base 2) of the ratio between the errors at consecutive time step sizes, and the theoretical
value is one for a first-order convergence. We observe that for all three $\alpha$ values, a steady
convergence for $M_n=2$ and $M_n=3$, however, the results for $M_n=1$ suffer from severe numerical
instability, as indicated by wild oscillations and large deviation from one. This observation holds for both Jacobi and
Gauss-Seidel smoothers, and agrees well with Theorem \ref{thm:iter-smooth}, which predicts that a
steady convergence of the scheme \eqref{eqn:fully2} requires a fixed but sufficiently large number of
iterations at all time levels for smooth solutions. Naturally, when the number $M_n$ is
sufficiently large, the obtained numerical solutions converge to that by the direct solver, which is
clearly observed in Tables \ref{tab:exp1} and \ref{tab:exp1-GS}. Surprisingly, although the convergence
of the incomplete iterative scheme becomes more steady as the number $M_n$ of iterations per time step
increases, the error does not decrease monotonically. That is, the incomplete iteration may actually improve the
accuracy of the numerical solution. The precise mechanism of the surprising phenomenon remains unclear.

\subsection{Example 2: nonsmooth solutions}
Next we consider problem \eqref{eqn:fde} with $A=-5\Delta$, $T=1$, $f=0$ and
$$v(x,y)=\chi_{(-1,0)}(x)+\chi_{(-1,0)}(y).$$
The initial data $v$ is piecewise constant and hence $v\in H^{\frac12-\epsilon}(\Omega)$ for any small $\epsilon>0$.
The number $M_n$ of iterations in the scheme \eqref{eqn:fully2} is taken to be (with integers $a,b\ge0$)
\begin{equation*}
 M_n = a + b \log_2(t_n^{-1}), \quad n > 2.
\end{equation*}

\begin{table}[hbt!]
\centering
\caption{$L^2$ errors $e^N$ for Example 2 with $a=3$ and $K=128$, point Jacobi smoother.}\label{tab:exp2}
{\setlength{\tabcolsep}{7pt}
	\begin{tabular}{|c|c|cccccc|}
		\hline
		$\alpha$  &$b\backslash N$ &$10$ &$20$ & $40$ & $80$ & $160$ & $320$   \\
		\hline
		     &   $0$      & 1.12e-2 & 5.61e-3 & 2.90e-3 & 1.47e-3 & 6.64e-4   & 3.25e-4\\
   & & &  1.06 & 0.95 &0.98 &1.15 &1.03\\
   \cline{2-8}
		 &   $3$      & 1.17e-2 & 5.71e-3 & 2.90e-3 & 1.57e-3 & 8.15e-4   & 3.67e-4\\
 0.2   & & &  1.03 & 0.98 &0.89 &0.94 &1.14\\
   \cline{2-8}
		     &   $6$     & 1.17e-2 & 5.77e-3 & 2.88e-3 & 1.42e-3 & 6.89e-4   & 3.49e-4\\
   & & &  1.02 & 1.00 &1.02 &1.05 &0.98\\
      \cline{2-8}
		     &   $\infty$     & 1.19e-2 & 5.85e-3 & 2.90e-3 & 1.45e-3 & 7.22e-4   & 3.61e-4\\
   & & &  1.02 & 1.01 &1.01 &1.00 &1.00\\
    \hline
		     &   $0$      & 3.80e-2 & 1.74e-2 & 9.74e-3 & 5.39e-3 & 2.55e-3   & 1.95e-3\\
   & & &  1.12 & 0.84 &0.85 &1.08 &0.39\\
     \cline{2-8}
	 	 &   $3$      & 3.82e-2 & 1.82e-2 & 9.52e-3 & 5.50e-3 & 2.80e-3   & 1.15e-3\\
 0.5   & & &  1.07 & 0.94 &0.80 &0.98 &1.27\\
   \cline{2-8}
		     &   $6$      & 3.84e-2 & 1.87e-2 & 9.36e-3 & 4.40e-3 & 2.20e-3  & 1.17e-3\\
   & & &  1.04 & 1.00 &1.09 &1.00 &0.90\\
      \cline{2-8}
		     &   $\infty$      & 3.94e-2 & 1.92e-2 & 9.47e-3 & 4.70e-3 & 2.34e-3  & 1.17e-3\\
   & & &  1.04 & 1.02 &1.01 &1.00 &1.00\\
	\hline
		     &   $0$      & 7.87e-2 & 3.29e-2 & 2.43e-2 & 1.16e-2 & 4.52e-3    & 4.03e-3\\
   & & &  1.26 & 0.44 &1.07 &1.36 &0.17\\
     \cline{2-8}
		  &   $3$      & 8.00e-2 & 3.70e-2 & 2.14e-2 & 1.12e-2 & 4.94e-3     & 2.54e-3 \\
  0.8 & & &  1.11 & 0.79 &0.94 &1.18 &0.96\\
    \cline{2-8}
		     &   $6$      & 8.12e-2 & 3.96e-2 & 2.01e-2 & 9.46e-3 & 5.40e-3     & 2.47e-3\\
   & & &  1.04 & 0.97 &1.09 &0.81 &1.13\\
       \cline{2-8}
		     &   $\infty$      & 8.75e-2 & 4.15e-2 & 2.03e-2 & 1.00e-2 & 4.97e-3     & 2.48e-3\\
   & & &  1.07 & 1.04 &1.02 &1.01 &1.00\\
	\hline
   \end{tabular}}
\end{table}

\begin{table}[hbt!]
\centering
\caption{$L^2$ errors $e^N$ for Example 2 with $K=128$, $a=1$ and $b=0$, Gauss-Seidel smoother.}\label{tab:exp3}
{\setlength{\tabcolsep}{7pt}
	\begin{tabular}{|c|cccccc|}
		\hline
		$\alpha\backslash N$  &$10$ &$20$ & $40$ & $80$ & $160$ & $320$   \\
		\hline
	 0.2 	    & 1.12e-2 & 5.71e-3 & 2.86e-3 & 1.43e-3 & 7.17e-4   & 3.58e-4\\
               & &  0.97 & 0.99 &1.00 &1.00 &1.00\\
	\hline
	 0.5 	     & 3.63e-2 & 1.84e-2 & 9.20e-3 & 4.58e-3 & 2.29e-3  & 1.14e-3\\
               & &  0.98 & 1.00 &1.00 &1.00 &1.00\\
	\hline
	 0.8 	    & 7.70e-2 & 3.89e-2 & 1.94e-2 & 9.71e-3 & 4.90e-3     & 2.47e-3\\
              & &  0.98 & 1.01 &1.00 &0.99 &0.99\\
	\hline
   \end{tabular}}
\end{table}

The numerical results for the example obtained with the scheme \eqref{eqn:fully2} with the Jacobi and Gauss-Seidel
smoothers are presented in Tables \ref{tab:exp2} and \ref{tab:exp3}, respectively. With the Jacobi smoother, it
is observed that with a fixed number of iterations at each time level (e.g., $M_n=3$), the IIS
\eqref{eqn:fully2} can fail to maintain the first order convergence, especially for $\alpha$ values close to one.
In contrast, surprisingly, for $\alpha$ value close to zero, even a fixed number of iterations tend to suffice
the desired first-order convergence, despite the low regularity of the solution. It might be related to
the fact that for small fractional order $\alpha$, the solution $u$ reaches a ``quasi''-steady state (before the
asymptotic regime) very rapidly, and thus the solution at neighboring time steps essentially reduces to very
similar elliptic problems. However, the precise mechanism of the interesting observation remains elusive. By
increasing the number $M_n$ of iterations slightly for small $t_n$, one can restore the desired $O(\tau)$
convergence rate of backward Euler CQ, which agree well with Theorems \ref{thm:err-smooth-ini} and \ref{thm:err-nonsmooth-ini}. By
changing Jacobi smoother to Gauss-Seidel smoother, the performance of the IIS \eqref{eqn:fully2} is
significantly enhanced, since one iteration at each time level is sufficient to maintain the desired accuracy.
The numerical results for Examples 1 and 2 show very clearly the potentials of the scheme \eqref{eqn:fully2} in
speeding up the numerical solution of the subdiffusion model with both smooth and nonsmooth solutions.

\section{Conclusions}
In this work, we have developed an efficient incomplete iterative scheme for the subdiffusion model. It
employs an iterative solver to solve the linear systems inexactly, and is straightforward to implement.
Further, we provided theoretical analysis of the scheme under a standard contraction assumption on the
iterative solver (in a weighted norm), and proved that it can indeed maintain the accuracy of the time
stepping scheme, provided the number of iterations at each time level is properly chosen, on which the
analysis has provided useful guidelines. The numerical experiments with standard multigrid methods
fully support the theoretical analysis and indicate that it can indeed significantly reduce the computational
cost of the time-stepping scheme.

In the context of nonsmooth data, the analysis of the incomplete iterative scheme \eqref{eqn:fully2} only
covers backward Euler convolution quadrature for the homogeneous problem. It is of much interest to extend the
analysis to other practically important scenarios, e.g., inhomogeneous problems and nonlinear problems, and high-order
time-stepping schemes, e.g., corrected L1 scheme and convolution
quadratures generated by BDF$k$ ($k\geq2$) and Runge-Kutta methods. In addition, the computational complexity and
memory requirement of the scheme can be further reduced by adopting suitable fast approximations to the convolution
\cite{AlpertGreengardHagstrom:2000,LubichSchadle:2002,JiangZhang:2017}.

\appendix

\section{Basic estimates}

\begin{lemma}\label{lem:basic-est1}
For $\beta,\gamma\geq 0$, there holds
\begin{equation*}
  \sum_{i=1}^n(n+1-i)^{-\beta}i^{-\gamma} \leq \left\{\begin{array}{ll}
      cn^{\max(1-\gamma,0)-\beta}, & 0\leq \beta<1,\gamma \neq1,\\
      cn^{-\beta}\ln (1+n), & 0\leq \beta\leq1,\gamma=1,\\
      %cn^{-\beta},      & 0\leq \beta<1, \gamma>1,\\
      c{n^{-\min(\beta,\gamma)}},       & \beta>1, \gamma>1.
  \end{array}\right.
\end{equation*}
\end{lemma}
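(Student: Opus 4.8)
The plan is to prove the estimate by the standard device of splitting the summation range at its midpoint and reducing each half to the elementary one-parameter bound
\begin{equation}\label{eqn:scalar-sum}
  \sum_{i=1}^m i^{-\sigma} \le c\begin{cases} m^{1-\sigma}, & 0\le\sigma<1,\\ 1+\ln m, & \sigma=1,\\ 1, & \sigma>1,\end{cases}
\end{equation}
which follows by comparison with $\int_1^m s^{-\sigma}\,\d s$ together with the trivial bound for the $i=1$ term. The case $n=1$ is immediate since the left-hand side is then an absolute constant, so throughout I would assume $n\ge2$.

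First I would set $m=\lceil n/2\rceil$ and write $\sum_{i=1}^n(n+1-i)^{-\beta}i^{-\gamma}=S_1+S_2$ with $S_1=\sum_{i=1}^m$ and $S_2=\sum_{i=m+1}^n$. In $S_1$ one has $n+1-i\ge n+1-m\ge n/2$, hence $(n+1-i)^{-\beta}\le 2^\beta n^{-\beta}$ and $S_1\le 2^\beta n^{-\beta}\sum_{i=1}^m i^{-\gamma}$. In $S_2$ I would substitute $k=n+1-i$, which ranges over $1,\dots,n-m$; since $i\ge m+1>n/2$ one has $i^{-\gamma}\le 2^\gamma n^{-\gamma}$, hence $S_2\le 2^\gamma n^{-\gamma}\sum_{k=1}^{n-m}k^{-\beta}$. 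Applying \eqref{eqn:scalar-sum} to the two inner sums gives, up to constants, $S_1\lesssim n^{-\beta}n^{\max(1-\gamma,0)}$ (with an extra factor $\ln(1+n)$ when $\gamma=1$) and $S_2\lesssim n^{-\gamma}n^{\max(1-\beta,0)}$ (with an extra factor $\ln(1+n)$ when $\beta=1$).

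The final step is to compare the two contributions in each parameter regime. For $0\le\beta<1$ and $\gamma\ne1$, one has $S_1\lesssim n^{\max(1-\gamma,0)-\beta}$ and $S_2\lesssim n^{1-\beta-\gamma}$; since $1-\gamma\le\max(1-\gamma,0)$, the $S_2$-exponent never exceeds the $S_1$-exponent, so the total is $\lesssim n^{\max(1-\gamma,0)-\beta}$. For $\gamma=1$ and $0\le\beta\le1$, one has $S_1\lesssim n^{-\beta}\ln(1+n)$ and $S_2\lesssim n^{-\beta}$ (or $n^{-1}\ln(1+n)$ if $\beta=1$), so the total is $\lesssim n^{-\beta}\ln(1+n)$. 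For $\beta,\gamma>1$ the inner sums in \eqref{eqn:scalar-sum} are bounded by absolute constants, so $S_1\lesssim n^{-\beta}$, $S_2\lesssim n^{-\gamma}$, and $S_1+S_2\lesssim n^{-\min(\beta,\gamma)}$.

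This argument is essentially bookkeeping; the only step requiring a little care is verifying, in the first regime, that the contribution localized near $i=n$ (namely $S_2$) is always subordinate to the one localized near $i=1$ (namely $S_1$), which is exactly the inequality $1-\gamma\le\max(1-\gamma,0)$ used above. I anticipate no genuine obstacle.
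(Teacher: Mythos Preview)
Your proposal is correct and follows essentially the same approach as the paper's proof: split the sum at the midpoint, pull out the factor that is comparable to a power of $n$ on each half, and reduce to the one-parameter bound \eqref{eqn:scalar-sum}. The only cosmetic differences are that the paper uses $[n/2]$ rather than $\lceil n/2\rceil$ and leaves the final case-by-case comparison of the two halves implicit, whereas you spell it out.
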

\vspace{-0.3cm}
\begin{proof}
We denote by $[\cdot]$ the integral part of a real number. Then
\begin{equation*}
  \sum_{i=1}^n(n+1-i)^{-\beta}i^{-\gamma} =   \sum_{i=1}^{[{n/2}]}(n+1-i)^{-\beta}i^{-\gamma} +   \sum_{i=[{n/2}]+1}^n(n+1-i)^{-\beta}i^{-\gamma}:={\rm I}+{\rm II}.
\end{equation*}
Then, by the trivial inequalities: for $1\leq i\leq [{n/2}] $, there holds $(n+1-i)^{-\beta}\leq cn^{-\beta}$ and for $[{n/2}]+1\leq i\leq n$, there holds $i^{-\gamma}\leq cn^{-\gamma}$, we deduce
\begin{align*}
    {\rm I} \leq  cn^{-\beta} \sum_{i=1}^{[{n/2}]}i^{-\gamma}\quad \mbox{and}\quad
    {\rm II}\leq cn^{-\gamma} \sum_{i=[{n/2}]+1}^n(n+1-i)^{-\beta}.
\end{align*}
Simple computation gives $\sum_{i=1}^ji^{-\gamma}\leq cj^{\max(1-\gamma,0)}$ if $\gamma\neq1$ and
$\sum_{i=1}^ji^{-1}\leq c\ln(j+1)$.
Combining these estimates yields the desired assertion.\qed
\end{proof}

Next we give an upper bound on the CQ weights $b_j^{(\alpha)}$.
\begin{lemma}\label{lem:bdd-b}
For the weights $b_j^{(\alpha)}$, $|b_j^{(\alpha)}| \le e^{2\alpha}(j+1)^{-\alpha-1}$.
\end{lemma}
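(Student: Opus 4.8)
The plan is to exploit the three-term recursion that the convolution weights inherit from their generating function. Differentiating $(1-\xi)^\alpha=\sum_{j\ge0}b_j^{(\alpha)}\xi^j$ and using $(1-\xi)\frac{\d}{\d\xi}(1-\xi)^\alpha=-\alpha(1-\xi)^\alpha$, one matches coefficients to get $b_0^{(\alpha)}=1$ and $(j+1)b_{j+1}^{(\alpha)}=(j-\alpha)b_j^{(\alpha)}$ for all $j\ge0$. Since $\alpha\in(0,1)$, the factor $j-\alpha$ is negative only at $j=0$, so $b_1^{(\alpha)}=-\alpha$ and thereafter the sign is preserved: $b_j^{(\alpha)}<0$ for every $j\ge1$. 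Iterating the recursion then gives the closed product form
\[
 |b_j^{(\alpha)}|=\alpha\prod_{m=2}^{j}\Bigl(1-\frac{1+\alpha}{m}\Bigr),\qquad j\ge1,
\]
with the empty product (case $j=1$) read as $1$; the remaining case $j=0$ is trivial because $|b_0^{(\alpha)}|=1\le e^{2\alpha}$.

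Next I would pass to logarithms. Each factor satisfies $0<1-\frac{1+\alpha}{m}<1$ for $m\ge2$ (here $\alpha<1$ is used), so the elementary bound $\ln(1-x)\le-x$ for $x\in[0,1)$ yields
\[
 \ln|b_j^{(\alpha)}|\le\ln\alpha-(1+\alpha)\sum_{m=2}^{j}\frac1m.
\]
Comparing the harmonic sum with an integral, $\sum_{m=1}^{j}\frac1m\ge\int_1^{j+1}\frac{\d t}{t}=\ln(j+1)$, hence $\sum_{m=2}^{j}\frac1m\ge\ln(j+1)-1$ (which holds trivially for $j=1$ as well, the right-hand side then being negative). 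Substituting gives
\[
 \ln|b_j^{(\alpha)}|\le\ln\alpha+(1+\alpha)-(1+\alpha)\ln(j+1).
\]

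To finish, I would invoke $\ln\alpha\le\alpha-1$ (valid for all $\alpha>0$), which gives $\ln\alpha+(1+\alpha)\le2\alpha$, and therefore $\ln|b_j^{(\alpha)}|\le2\alpha-(1+\alpha)\ln(j+1)$, i.e. $|b_j^{(\alpha)}|\le e^{2\alpha}(j+1)^{-\alpha-1}$. I do not anticipate a genuine obstacle here; the only point needing care is to retain the prefactor $\alpha$ coming from $b_1^{(\alpha)}=-\alpha$ throughout the estimate rather than discarding it, since it is precisely this factor — combined with $\ln\alpha\le\alpha-1$ — that absorbs the spurious $e^{1+\alpha}$ produced by the crude integral bound on the harmonic sum down to the asserted constant $e^{2\alpha}$.
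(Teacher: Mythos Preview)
Your proof is correct and follows essentially the same line as the paper's own argument: express $|b_j^{(\alpha)}|$ as the product $\alpha\prod_{m=2}^{j}(1-\tfrac{1+\alpha}{m})$, take logarithms, use $\ln(1-x)\le -x$ and the integral lower bound for the harmonic sum, and close with $\ln\alpha\le\alpha-1$. The only difference is that you supply a derivation of the product formula via the recursion $(j+1)b_{j+1}^{(\alpha)}=(j-\alpha)b_j^{(\alpha)}$, whereas the paper simply states the formula.
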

\begin{proof}
The weight $b_j^{(\alpha)}$ is given by $b_0^{(\alpha)}=1$ and $b_j^{(\alpha)}= -\Pi_{\ell=1}^j(1-\frac{1+\alpha}{\ell})$ for any $j\geq1$.
Note the elementary inequality $\ln (1-x)\leq -x$ for any $x\in(0,1)$, and the estimate
$\sum_{\ell=1}^j\ell^{-1}\geq \int_1^{j+1}s^{-1}\d s = \ln (j+1).$ Since $\ln \alpha = \ln (1-(1-\alpha))\leq \alpha-1$, for any $j\geq1$,
\begin{align*}
 \ln |b_j^{(\alpha)}| & = \ln\alpha+\sum_{\ell=2}^j \ln \left(1-\frac{1+\alpha}{\ell}\right) \le \ln \alpha - \sum_{\ell=2}^j\frac{1+\alpha}{\ell} \\
  & = \ln\alpha + (1+\alpha)  - \sum_{\ell=1}^j\frac{1+\alpha}{\ell} \le 2\alpha-(1+\alpha) \ln (j+1).
 \end{align*}
This completes the proof of the lemma. \qed
\end{proof}

\section{Proof of Lemmas \ref{lem:est-uh-weight} and \ref{lem:tal2}}\label{app:reg}

In this part, we provide the proof of Lemmas \ref{lem:est-uh-weight} and \ref{lem:tal2}. The proof of
Corollary \ref{cor:tal3} is identical with that for Lemma \ref{lem:tal2} and thus it is omitted.
The proof relies on the discrete Laplace transform, and the following two well-known estimates
\begin{align}
  \quad c_1 |z| &\le |\delta_\tau(e^{-z\tau})| \le c_2|z| \quad \forall z\in \Gamma_{\theta,\delta}^\tau,\label{eqn:gen}\\
  |\delta_\tau(e^{-z\tau})| &\le |z| \sum_{k=1}^\infty \frac{|z\tau|^{k-1}}{k!} \leq |z|e^{|z|\tau}, \quad \forall z\in \Sigma_\theta,\label{eqn:est-kernel}
\end{align}
and the resolvent estimate: for any $\theta\in(\pi/2,\pi)$,
\begin{equation}\label{eqn:resol}
  \|(z+A_h)^{-1}\|\leq c|z|^{-1},\quad \forall z\in \Sigma_\theta.
\end{equation}

Now we can give the proof of Lemma \ref{lem:est-uh-weight}.
\begin{proof}[of Lemma \ref{lem:est-uh-weight}]
By Laplace transform, $w_h(t_n)=\bar\partial_\tau^2 u_h(t_n)$ is given by
\begin{align*}
  w_h(t_n) = \frac{1}{2\pi i} \int_{\Gamma_{\theta,\delta}}\delta_\tau(e^{-z\tau})^2e^{zt_n}K(z)v_h \d z,\quad \mbox{with }K(z) = z^{\alpha-1}(z^\alpha+A_h)^{-1}.
\end{align*}
We split the contour $\Gamma_{\theta,\delta}$ into $\Gamma_{\theta,\delta}^\tau$ and $\Gamma_{\theta,\delta}
\setminus\Gamma_{\theta,\delta}^\tau$, and denote the corresponding integral by ${\rm I}$ and ${\rm II}$,
respectively. We discuss the cases $v\in L^2(\Omega)$ and $v\in D(A)$, separately.

\noindent Case (i): $v\in L^2(\Omega)$. By \eqref{eqn:gen} and \eqref{eqn:resol},
$\|K(z)\| \leq c$ for $z\in \Gamma_{\theta,\delta}^\tau$. Then choosing
$\delta=c/t_n$ in $\Gamma_{\theta,\delta}^\tau$ gives
\begin{equation*}
\| {\rm I}\|_{L^2(\Omega)}
   \le c \|  v_h \|_{L^2(\Omega)} \Big(\int_{\frac{c}{t_n}}^{\frac{\pi\sin\theta}{\tau}} \rho e^{t_n\rho\cos\theta} \,\d\rho
   + \int_{-\theta}^\theta t_n^{-2} \,\d\varphi \Big)
     \le c t_n^{-2}  \|  v_h \|_{L^2(\Omega)}.
\end{equation*}
For any $z=\rho e^{\pm\mathrm{i}\theta}\in \Gamma_{\theta,\delta}\setminus\Gamma_{\theta,\delta}^\tau$,
by the estimates \eqref{eqn:est-kernel} and \eqref{eqn:resol}, $  \|K(z)\| \leq ce^{2\rho\tau}.$
By choosing $\theta\in(\pi/2,\pi)$ sufficiently close to $\pi$, we deduce
\begin{align*}
\|{\rm II}\|_{L^2(\Omega)}
  \le  c \|  v_h\|_{L^2(\Omega)}  \int_{\frac{\pi\sin\theta}{\tau}}^\infty e^{\rho(\cos\theta t_n+2\tau)} \rho \,\d\rho \le ct_n^{-2} \|  v_h\|_{L^2(\Omega)}.
\end{align*}
Thus, $\|\bar\partial_\tau^2u_h(t_n)\|\le c t_n^{-2}\|v_h\|_{L^2(\Omega)}.$
Next, by the identity $A_h(z^\alpha+A_h)^{-1}=I-z^\alpha(z^\alpha +A_h)$ and \eqref{eqn:resol},
$\|A_hK(z)\|\leq |z|^{\alpha-1}$ for $z\in \Sigma_\theta$. Then repeating the argument gives
\begin{equation*}
  \tau^\alpha \|A_h \bar\partial_\tau^2 u_h(t_n)\| \le c \tau^\alpha t_n^{-2-\alpha} \| v_h\|_{L^2(\Omega)} \le ct_n^{-2} \| v_h  \|_{L^2(\Omega)} .
\end{equation*}
Then the assertion for the case $v\in L^2(\Omega)$ follows from the triangle inequality.

\noindent Case (ii): $v\in D(A)$. Simple computation gives the identity $K(z)v_h=z^{\alpha-1}(z^\alpha +A_h)^{-1}v_h
= z^{-1}v_h - z^{-\alpha}(z^\alpha+A_h)^{-1}A_hv_h$. Thus, we have
\begin{equation*}
  w_h(t_n)   = - \frac{1}{2\pi i} \int_{\Gamma_{\theta,\delta}}e^{zt_n} \delta_\tau(e^{-z\tau})^2 z^{-\alpha}K(z)A_h v_h \d z,
\end{equation*}
in which we split the contour $\Gamma_{\theta,\delta}$ into $\Gamma_{\theta,\delta}^\tau$ and $\Gamma_{\theta,\delta}\setminus
\Gamma_{\theta,\delta}^\tau$, and accordingly the integral. Then the rest of the proof follows from the estimates \eqref{eqn:gen},
\eqref{eqn:est-kernel} and \eqref{eqn:resol} as before.\qed
\end{proof}

Last, we prove Lemma \ref{lem:tal2}.
\begin{proof}[of Lemma \ref{lem:tal2}]
By Laplace transform and its discrete analogue, we have
\begin{align*}
 \partial_t^\alpha y_h(t_n)-\bar \partial_\tau^\alpha y_h(t_n)
   &=  \frac{1}{2\pi\mathrm{i}} \int_{\Gamma_{\theta,\delta}^\tau} e^{zt_n} K(z) A_h v_h\,\d z + \frac{1}{2\pi\mathrm{i}} \int_{\Gamma_{\theta,\delta}\setminus\Gamma_{\theta,\delta}^\tau} e^{zt_n} K(z) A_h v_h\,\d z\\
    &:={\rm I}+{\rm II},
\end{align*}
with $K(z)=(\delta_\tau(e^{-z\tau})^{\alpha}-z^\alpha)z^{ -1} (z^\alpha+A_h)^{-1}$.
Recall the following estimate:
\begin{equation}\label{eqn:gen-1}
  | \delta_\tau(e^{-z\tau})^{\alpha}-z^\alpha| \le c \tau z^{1+\alpha},\quad \forall z\in \Gamma_{\theta,\delta}^\tau.
\end{equation}
Then by choosing $\delta=c/t_n$ in the contour $\Gamma_{\theta,
\delta}^\tau$ and the resolvent estimate \eqref{eqn:resol}, we obtain
\begin{align*}
\|{\rm I}\|_{L^2(\Omega)} &\le c \tau \| A_h v_h \|_{L^2(\Omega)} \Big(\int_{\frac{c}{t_n}}^{\frac{\pi\sin\theta}{\tau}} e^{-c\rho t_n} \,\d\rho
   + \int_{-\theta}^\theta ct_n^{-1} \,\d\varphi \Big) \le c\tau t_n^{-1}  \| A v \|_{L^2(\Omega)}.
\end{align*}
Further, by \eqref{eqn:est-kernel}, for any $z=\rho e^{\pm\mathrm{i}\theta}\in \Gamma_{\theta,\delta}\setminus\Gamma_{\theta,\delta}^\tau$ and
choosing $\theta\in(\pi/2,\pi)$ close to $\pi$,
\begin{align*}
  |e^{zt_n} (\delta_\tau(e^{-z\tau})^{\alpha}-z^\alpha)z^{ -1}| & \leq e^{t_n\rho\cos \theta}(c|z|^\alpha e^{\alpha\rho\tau}+|z|^\alpha)|z|^{-1}\leq c|z|^{\alpha-1}e^{-c\rho t_n}.
\end{align*}
Then we deduce
\begin{align*}
\|{\rm II}\|_{L^2\II}
   &\le  c \| A_hv_h\|_{L^2(\Omega)}  \int_{\frac{\pi\sin\theta}{\tau}}^\infty e^{-c\rho t_n} \rho^{-1}\,\d\rho
    \le c\tau t_n^{-1} \| Av\|_{L^2(\Omega)}.
\end{align*}
Thus, we show the assertion for $\beta=0$. For the case $\beta=1$, the identity $A_h(z^\alpha+A_h)^{-1}=I-
z^\alpha(z^\alpha +A_h)$, \eqref{eqn:resol} and \eqref{eqn:gen-1} give
\begin{align*}
\|A_h{\rm I}\|_{L^2\II}
   &\le c \tau \| A_h v_h\|_{L^2(\Omega)} \Big(\int_{\frac{c}{t_n}}^{\frac{\pi\sin\theta}{\tau}} e^{-c\rho t_n}\rho^\alpha \,\d\rho
   + \int_{-\theta}^\theta ct_n^{-1-\alpha} \,\d\varphi \Big)\\
    & \le c\tau t_n^{-1-\alpha}  \| A v\|_{L^2(\Omega)},
\end{align*}
and the bound on $\|A_h{\rm II}\|_{L^2(\Omega)}$ follows analogously, completing the proof for $\beta=1$.
Then the case $\beta\in(0,1)$ follows by interpolation. This shows part (i).
The proof of part (ii) is similar and applies the $L^2(\Omega)$ stability of $P_h$, and
hence the detail is omitted.\qed
\end{proof}

\bibliographystyle{spmpsci}

\end{document}